\newcommand{\truncateit}[1]{\truncate{0.8\textwidth}{#1}}
\newcommand{\scititle}[1]{\title[\truncateit{#1}]{#1}}
\theoremstyle{plain}
\newtheorem{theorem}{Theorem}[section]
\newtheorem{corollary}[theorem]{Corollary}
\newtheorem{lemma}[theorem]{Lemma}
\newtheorem{claim}[theorem]{Claim}
\theoremstyle{definition}
\newtheorem{definition}[theorem]{Definition}
\newtheorem{remark}[theorem]{Remark}
\def\de{\delta} \def\dl{\partial}   \def\Si{\Sigma} \def\si{\sigma}
\def\su{\subset}    \def\De{\Delta} \def\Lm{\Lambda}
\def\al{\alpha} \def\be{\beta}      \def\ga{\gamma} \def\Ga{\Gamma}  \def\ra{\rightarrow}\def\om{\omega} \def\OM{\Omega}
\def\ti{\tilde}     \def\z{\times}  \def\x{\cdot}
\def\y{\circ}   \def\ep{\varepsilon}\def\F{\Phi}    
\newcommand{\N}{\mathbb{N}}
\newcommand{\R}{\mathbb{R}}
\newcommand{\Bb}{\mathcal{B}}
\newcommand{\Nn}{\mathcal{N}}
\newcommand{\Tt}{\mathcal{T}}
\newcommand{\Mm}{\mathcal{M}}
\DeclareMathOperator{\diam}{diam}
\DeclareMathOperator{\dist}{distance}
\DeclareMathOperator{\length}{length}
\DeclareMathOperator{\vol}{vol}
\begin{document}
\begin{abstract}
In this paper, we show that for any closed 4-dimensional simply-connected Riemannian manifold $M$ with Ricci curvature $|Ric|\leq 3$, volume $\vol(M)>v>0$, and diameter $\diam(M)\leq D$, the length of a shortest closed geodesic is bounded by a function $F(v,D)$ which only depends on $v$ and $D$.

The proofs of our result are based on a recent theorem of diffeomorphism finiteness of the manifolds satisfying the above conditions proven by J.~Cheeger and A.~Naber.
\end{abstract}

\scititle{Length of a shortest closed geodesic in manifolds of dimension four}
\author{Nan Wu, Zhifei Zhu}
\date{\today}
\maketitle

\section{Introduction}
The first main result of this paper is the following theorem.
\begin{theorem}\label{thm1_l}
Let $M$ be a closed 4-dimensional simply-connected Riemannian manifold with Ricci curvature $|Ric|\leq 3$, volume $\vol(M)>v>0$, and diameter $\diam(M)\leq D$. Then the length of a shortest closed geodesic on $M$ is bounded by a function $F(v,D)$ which only depends on $v$ and $D$.
\end{theorem}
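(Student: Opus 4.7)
The plan is to argue by contradiction, combining the Cheeger--Naber diffeomorphism finiteness cited in the abstract with Gromov precompactness and a min-max construction of a closed geodesic. Assume the theorem fails: then there exists a sequence $(M_i,g_i)$ of closed simply-connected $4$-manifolds with $|Ric_{g_i}|\le 3$, $\vol(M_i)>v$, and $\diam(M_i)\le D$ whose shortest closed geodesic lengths $L_i$ diverge to infinity. By the Cheeger--Naber theorem, such manifolds realize only finitely many diffeomorphism types, so after passing to a subsequence every $M_i$ is diffeomorphic to a single closed simply-connected $4$-manifold $N$. Since $N$ is nontrivial and simply connected, Hurewicz together with Poincar\'e duality furnishes a nonzero class in $\pi_k(N)$ for some $k\in\{2,3,4\}$; via the loop-space adjunction $\pi_k(N)\cong\pi_{k-1}(\Omega N)$ this yields a nontrivial family of closed curves in $N$ parameterized by $S^{k-1}$, i.e., a sweepout.

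Next, by Gromov's precompactness together with the Cheeger--Colding--Naber structure theory of non-collapsed limits under two-sided Ricci bounds, a further subsequence converges in Gromov--Hausdorff topology to a compact limit space $(X,d_\infty)$ whose regular part $X_{\mathrm{reg}}$ is a smooth Riemannian manifold, and whose singular set $\Ss(X)$ has Hausdorff codimension at least $4$; in our dimension this forces $\Ss(X)$ to be a finite collection of isolated points. Moreover $g_i\to g_\infty$ in $C^{1,\alpha}$ on compact subsets of $X_{\mathrm{reg}}$. Transplanting the above sweepout of $N$ to $X$ via the common diffeomorphism type and perturbing generically, one obtains a sweepout of $X$ whose curves all lie in $X_{\mathrm{reg}}$, a uniform positive distance from $\Ss(X)$. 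A Birkhoff-type min-max on $X_{\mathrm{reg}}$ then extracts a closed geodesic $\gamma\subset X_{\mathrm{reg}}$ whose length is at most the width of this sweepout, and Bishop--Gromov comparison together with codimension-$4$ regularity bound that width by a constant $L_0=L_0(v,D)$ depending only on $v$ and $D$.

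Using the $C^{1,\alpha}$ convergence on a tubular neighborhood of $\gamma$ in $X_{\mathrm{reg}}$, I would then lift $\gamma$ to a closed curve $\gamma_i\subset M_i$ of length at most $L_0+o(1)$ for large $i$. Standard curve-shortening on the smooth manifold $M_i$ produces a closed geodesic of length $\le L_0+o(1)$, contradicting $L_i\to\infty$ and completing the proof.

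I expect the hard step to be executing the min-max/curve-shortening on the singular limit $X$: one must show that (i) the sweepout can be kept a uniform positive distance from $\Ss(X)$ throughout the flow, so that no curve is dragged into a singularity, and (ii) the sweepout width admits a bound purely in terms of $v$ and $D$. Both tasks rely on the Cheeger--Naber codimension-$4$ estimate and quantitative volume-diameter control. An alternative plan would be to carry out the curve-shortening directly on each $M_i$, using $\varepsilon$-regularity to produce harmonic coordinates of uniformly controlled size and bypassing the passage to a limit altogether, at the cost of a more quantitative argument.
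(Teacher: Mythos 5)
Your proposal takes a genuinely different route from the paper -- a compactness/contradiction argument passing to a Gromov--Hausdorff limit, versus the paper's direct quantitative construction inside each $M$ -- but it contains several genuine gaps that would need substantial new ideas to close.

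The most serious problem is the claim that one can transplant a sweepout of $N$ to $X$ ``via the common diffeomorphism type.'' The GH limit $X$ of a sequence of non-collapsed $4$-manifolds with two-sided Ricci bounds need not be homeomorphic to $N$: singular points of $X$ are modelled on cones $\R^4/\Gamma$, and the $M_i$ are obtained from $X$ by ``bubbling off'' ALE spaces at those points. (Think of K3 surfaces degenerating to $T^4/\Z_2$.) So the nontrivial class in $\pi_k(N)$ gives you no sweepout of $X$, and conversely a geodesic found in $X_{\mathrm{reg}}$ has no obvious counterpart inside a nontrivial sweepout of $M_i$. Lifting a single short curve $\gamma\subset X_{\mathrm{reg}}$ to $M_i$ is also not enough: a short curve might contract to a point under curve-shortening in $M_i$, so you would need to lift an entire homotopically nontrivial sweepout, which runs into the same topological mismatch.

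Even setting topology aside, the claim that the sweepout width on $X$ is bounded by a constant $L_0(v,D)$ ``by Bishop--Gromov comparison together with codimension-$4$ regularity'' is precisely the heart of the matter and is not a consequence of those tools alone. Bishop--Gromov controls volumes of balls, not widths of contractions; the whole content of the paper's Sections~2--4 (the bubble-tree covering, the graph $\Sigma$, the simplicial approximation lemmas) is to manufacture such a width bound, and nothing about passing to a GH limit makes that job easier -- it only moves the difficulty to a singular space where Birkhoff-type min-max is not a standard tool and where keeping curves a uniform distance from $\Ss(X)$ is itself unresolved, as you acknowledge. By contrast, the paper never takes a limit: it works in a fixed $M$, uses Cheeger--Naber's decomposition to cover $M$ by harmonic balls and trapezoids, contracts any loop through the nerve graph with width bounded in terms of $\tilde N(v,D)$, and then feeds that width bound into the Nabutovsky--Rotman theorem (their Theorem~7.3/Corollary~5.4) relating loop depth to the existence of short closed geodesics. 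Your alternative closing suggestion -- run curve-shortening directly on each $M_i$ with $\varepsilon$-regularity controlling coordinate patches -- is much closer in spirit to what the paper does, but as stated it is one sentence standing in for what is, in the paper, the entire argument.
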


Let us denote by $\Mm(4,v,D)$ below the set of closed 4-dimensional simply-connected Riemannian manifolds with Ricci curvature $|Ric|\leq 3$, volume $\vol(M)>v>0$, and diameter $\diam(M)\leq D$.

\begin{remark}
If $M$ is not simply-connected, then one can always bound the length of the shortest closed geodesic by $2\cdot\diam(M)$. (See, for example, \cite{gromov2007metric}.) Therefore, we only consider the case where $M$ is simply-connected.
\end{remark}

In Theorem~\ref{thm1_l}, we do not have an explicit form for the function $F(v,D)$. The proof of our main theorem relies on an explicit construction of a covering of the manifold $M$ by harmonic balls and a certain type of contractible open sets. The construction of this covering is based on a theorem of diffeomorphism finiteness for manifolds in $\Mm(4,v,D)$, proved by J.~Cheeger and A.~Naber in \cite{cheeger2014regularity}.

The number of these sets in the covering, which plays an important role in our estimation, depends on the constants $\ep(v)$ and $r_0(v)$ in the following ``$\ep$-regularity'' theorem  \cite[Proposition~2.5]{anderson1990convergence}.

\begin{theorem}[\cite{anderson1990convergence}, Proposition~2.5]\label{epr}
Let $M\in \Mm(4,v,D)$ and $B(r)$, $r\leq D$ a geodesic ball in $M$. Then there are positive constants $\ep(v)$ and $r_0(v)$ such that if the curvature satisfies $\int_{B(2r)} |R|^2 <\ep$, then for all $x\in B$, the harmonic radius $r_h(x)$ at $x$ satisfies
$$\frac{r_h(x)}{\dist (x,\dl B)}\geq r_0>0.$$
\end{theorem}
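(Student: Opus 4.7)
The plan is to prove the statement by a contradiction-and-blowup argument. Suppose the conclusion fails for some $v > 0$: then for each $i \in \N$ there exist $M_i \in \Mm(4,v,D)$, a geodesic ball $B_i \subset M_i$ of radius $r_i \leq D$ with $\int_{B_i(2r_i)} |R_i|^2 < 1/i$, and a point $x_i \in B_i$ satisfying $r_h(x_i)/\dist(x_i, \partial B_i) < 1/i$. A standard point-selection trick (take $x_i$ so that this ratio is nearly minimal in $B_i$) can be used to arrange additionally that $r_h(y)$ stays comparable to $r_h(x_i)$ for all $y$ in a ball around $x_i$ of radius proportional to $\dist(x_i, \partial B_i)$, which furnishes uniform harmonic-radius control on large rescaled neighborhoods of $x_i$.

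Next, rescale each $g_i$ by $r_h(x_i)^{-2}$ to obtain $\tilde g_i$. Under this rescaling: the new harmonic radius at $x_i$ is $1$; the distance from $x_i$ to $\partial B_i$ is at least $i$, hence tends to infinity; the Ricci bound becomes $|\widetilde{\mathrm{Ric}}_i| \leq 3\, r_h(x_i)^2 \to 0$, using $r_h(x_i) \leq D/i \to 0$; and, crucially for dimension four, $\int |R|^2\,dV$ is scale-invariant, so $\int |\tilde R_i|^2\,d\tilde V_i < 1/i \to 0$ on all the relevant rescaled balls. Applying Bishop--Gromov volume comparison to the original manifolds, using the lower Ricci bound and $\vol(M_i) \geq v$, transfers under rescaling to a uniform non-collapsing bound $\vol(\tilde B(x_i, R)) \geq c R^4$ for all $R$ up to some radius tending to infinity. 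Pointed Cheeger--Gromov compactness then extracts a subsequential $C^{1,\alpha}$ limit $(N, g_\infty, x_\infty)$ that is a complete, smooth, non-collapsed $4$-manifold with $\mathrm{Ric} \equiv 0$ and $\int_N |R_\infty|^2\,dV = 0$, hence flat.

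The contradiction is extracted as follows. By lower semicontinuity of the harmonic radius under $C^{1,\alpha}$ convergence, $r_h(x_\infty) \leq \liminf_i r_h(x_i) = 1$. On the other hand, $(N, g_\infty)$ is a complete smooth flat $4$-manifold with Euclidean volume growth $\vol(B(x_\infty, r)) \geq c r^4$ for all $r$, so by the classification of complete flat manifolds (any $\R^4/\Gamma$ with $\Gamma$ nontrivial has strictly sub-Euclidean volume growth) it must be isometric to $\R^4$, for which the harmonic radius is infinite at every point. This contradicts $r_h(x_\infty) \leq 1$ and completes the argument. The principal obstacle in this scheme is the regularity of the blow-up limit: one has to ensure that the limit is a genuine smooth manifold rather than acquiring orbifold singularities during the degeneration, that the harmonic-radius lower bound from the point-selection step propagates through the convergence, and that the $L^2$-smallness of curvature actually transfers to the limit --- each relying delicately on the interplay between the Ricci bound and $L^2$-control of curvature in harmonic coordinates, which is the technical heart of Anderson's approach.
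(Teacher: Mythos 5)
The statement labeled Theorem~\ref{epr} is not proved in this paper; it is quoted verbatim from M.~Anderson's 1990 paper (Proposition~2.5 there), and the present authors explicitly rely on Anderson's proof (``see \cite[Section~2]{anderson1990convergence}''). So there is no in-paper argument for you to be compared against. That said, your blow-up/contradiction scheme --- choose a nearly minimizing point for the ratio $r_h(x)/\dist(x,\partial B)$, rescale by $r_h(x_i)^{-2}$ so the harmonic radius is normalized to $1$, invoke the dimension-four scale invariance of $\int|R|^2$, use non-collapsing plus the vanishing Ricci bound to obtain a pointed $C^{1,\alpha}$ limit, conclude the limit is flat and by Euclidean volume growth must be $\R^4$, and derive a contradiction with the normalization --- is precisely Anderson's argument, so you have in fact reconstructed the cited proof correctly in outline.

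A few remarks on precision. First, the sentence ``by lower semicontinuity of the harmonic radius\ldots $r_h(x_\infty) \leq \liminf_i r_h(x_i)=1$'' identifies the right inequality but obscures why it holds here. General lower semicontinuity of $r_h$ in $C^{1,\alpha}$ is delicate because the defining bound $\leq 10^{-3}$ is a closed condition: a harmonic chart for $g_\infty$ on $B_r(x_\infty)$ achieving norm deviation exactly $10^{-3}$ could perturb to a chart for $g_i$ with deviation slightly above $10^{-3}$. The reason the contradiction is clean in your setting is that the limit metric is \emph{flat}, so the identity chart gives zero norm deviation; perturbing these to harmonic charts for $\tilde g_i$ on arbitrarily large balls yields norm deviation tending to $0$, forcing $r_h(x_i,\tilde g_i)\to\infty$ directly, which contradicts $r_h(x_i,\tilde g_i)=1$. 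Stating the contradiction this way avoids appealing to a general semicontinuity statement that requires more care than you give it. Second, and as you rightly flag, the genuinely hard part of Anderson's proof is showing the pointed Gromov--Hausdorff limit carries the $C^{1,\alpha}$ manifold structure at all --- that is, establishing the uniform local harmonic-radius lower bound needed to upgrade Gromov--Hausdorff to $C^{1,\alpha}$ convergence before one can even speak of curvature of the limit. Your point-selection remark gestures at this but it is the actual technical heart: one bootstraps $L^2$-smallness of curvature through elliptic estimates in harmonic coordinates, and this is what consumes Section~2 of \cite{anderson1990convergence}. Treating it as a black box makes the outline correct but not self-contained.
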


\begin{remark}
The above ``$\ep-$regularity'' theorem holds for any dimension $n$. However, in our paper, we will only use the case of $n=4$.
\end{remark}

In the work \cite{cheeger2014regularity} of Cheeger and Naber, the authors are able to obtain a similar estimate without the integral of the curvature $\int|R|\leq \ep$ condition, using more advanced techniques developed in \cite{cheeger1997structure} and \cite{cheeger2014regularity}. We are going to introduce these results in Section 2. With these estimates about harmonic radius on Riemannian manifolds, we are able to improve our main theorem as the following.

\begin{theorem}\label{thm1_e}
Let $M\in\Mm(4,v,D)$. If for some $\ep(v)$ and $r_0(v)$, the manifold $M$ satisfies the above Theorem~\ref{epr}, then one can write down an explicit expression of $F$ in Theorem~\ref{thm1_l} in terms of $v$, $D$, $\ep$ and $r_0$.
\end{theorem}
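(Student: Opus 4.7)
This statement is essentially an unpacking of the proof of Theorem~\ref{thm1_l}: the only source of nonexplicitness in that argument is the implicit Cheeger-Naber generalization of the harmonic-radius bound, so once Theorem~\ref{epr} is taken as input with named constants $\ep$ and $r_0$, the remainder of the argument is quantitative. The plan is simply to rerun the proof of Theorem~\ref{thm1_l} while tracking each constant.

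First I would follow the construction of the covering $\mathcal{U}$ of $M$ used in the proof of Theorem~\ref{thm1_l}: harmonic balls on the ``regular part'' (where $\int_{B(2r)}|R|^2<\ep$) together with contractible open sets arising from Cheeger-Naber's diffeomorphism finiteness near the regions of curvature concentration. The Gauss-Bonnet-Chern formula, combined with $|Ric|\leq 3$ and $\diam(M)\leq D$, gives a universal upper bound on $\int_M|R|^2$, hence an explicit upper bound on the number of ``bad'' regions in terms of $\ep$. Together with the volume lower bound $v$ and Bishop-Gromov comparison, this produces an explicit upper bound $N=N(v,D,\ep,r_0)$ on $|\mathcal{U}|$ together with explicit geometric control on each member of $\mathcal{U}$. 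On a harmonic ball with $r_h(x)\geq r_0\cdot\dist(x,\dl B)$, the metric is moreover $C^{1,\alpha}$-close to the Euclidean one at an explicit scale.

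Then I would insert these explicit quantities into the curve-shortening step used in the proof of Theorem~\ref{thm1_l}. This yields an explicit Lebesgue number for $\mathcal{U}$ and a definite per-step length decrement in the loop-shortening process, each controlled from below in terms of $r_0$. Compounding the number of steps (controlled by $N$) with the per-step gain produces the desired explicit $F(v,D,\ep,r_0)$. The main obstacle is bookkeeping: one must verify that the contractibility moduli output by Cheeger-Naber depend only on the ``observable'' data $v$, $D$, and the curvature bound, and not on an auxiliary compactness argument, so that no further nonexplicit constant is smuggled in at the interface between the harmonic balls and the contractible neighborhoods.
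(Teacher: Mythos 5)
There is a genuine gap: your premise that ``the only source of nonexplicitness \ldots is the implicit Cheeger--Naber generalization of the harmonic-radius bound, so \ldots the remainder of the argument is quantitative'' does not match how the paper actually proves Theorem~\ref{thm1_l}. The paper proves Theorem~\ref{thm1_l} via Theorem~\ref{thm2_w}A, and the width bound there comes from Lemma~\ref{lm5}, whose bound $F(m,l)$ is obtained by a pure finiteness argument (maximize over the finitely many pairs $(S,\ga)$ of subcomplexes of the nerve and contractible simplicial loops in them). That step cannot be made explicit by ``tracking each constant'' — it gives no formula. The paper therefore proves the explicit version with a genuinely different mechanism (Theorem~\ref{thm2_w}B, whose proof is ``based on different methods'' as the paper says): it introduces the enlarged graph $\Ga$, the Partition Lemma (Lemma~\ref{partition of geodesic}) splitting a minimizing geodesic into body and annular pieces, Lemma~\ref{lm6} and Lemma~\ref{approx in neck} bounding the simplicial length of approximations in bodies and necks (the latter using the $\pi_1$-generator estimate, Lemma~\ref{Gromov lemma}), the length-reduction Lemma~\ref{length reduction}/\ref{lm7}, and then a Birkhoff-curve-shortening-plus-tree bookkeeping to get a height bounded by $2(\ti N^2+1)^Z+1$. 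Your proposal does not contain (or replace) any of this.

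A second, related omission is the actual source of difficulty the paper flags explicitly: ``there is no lower bound for the radius of the harmonic balls in the covering.'' Theorem~\ref{epr} only gives $r_h(x)\geq r_0\cdot\dist(x,\dl B)$ on balls where the curvature integral is small; near curvature concentration these scales shrink without lower bound, and the Cheeger--Naber bubble-tree iteration is precisely the way to see this. The paper circumvents it by covering neck annuli with trapezoids $T^{k+1}_{j,i}$ adapted to the conical geometry $\R^4/\Ga$, rather than by metric balls; a ``Lebesgue number for $\mathcal{U}$'' in the ordinary sense does not exist here, so the per-step decrement you invoke has no uniform lower bound from the covering alone. The decrement the paper actually uses comes from detecting repeated edges or repeated annular crossings in the simplicial approximation (Lemma~\ref{length reduction}(2),(3)), which is a structurally different argument. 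Your Gauss--Bonnet--Chern remark about bounding $\int_M|R|^2$ and hence the number of bad regions is true but is already subsumed in the bubble-tree constants $N(v,D),k(v,D),C(v,D)$; it does not by itself control the total number of harmonic balls nor the simplicial length of curve approximations, which is where the explicit work lies. Finally, the proof also needs the standing reduction ``no closed geodesic of length $\le 4D$'' (else one is done), which is essential for the Birkhoff shortening in the tree construction and is not mentioned in your outline.
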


Note that from the proof of the Theorem~\ref{epr} (see \cite[Section 2]{anderson1990convergence}), one may not obtain an explicit expression of the constant $r_0$ in terms of $\ep$. In fact, in the work of M.~Anderson \cite{anderson1989ricci}, one can explicitly estimate the constants $\ep$ and $r_0$ in terms of the local Sobolev constant and the second derivative of the Ricci curvature. And the Sobolev constant is explicitly estimated in terms of the volume in \cite{anderson1992thel}.

As a result, if the manifold is Einstein, then the second derivative of the Ricci curvature vanishes and the above $\ep$ is bounded by $C\cdot v^{-1/2}$, where $C$ is a constant that only depends on dimension. This leads to the following corollary which provides an explicit bound for the length of the shortest closed geodesic in Theorem~\ref{thm1_l}.

\begin{corollary}
Let $(M,g)$ be a closed 4-dimensional simply-connected Einstein manifold with Ricci curvature $Ric=kg$, where $-3\leq k \leq 3$ is a constant. Suppose that the volume $\vol(M)>v>0$, and the diameter $\diam(M)\leq D$. Then the length of a shortest closed geodesic on $M$ is bounded by an explicit function $F(v,D)$ which only depends on $v$ and $D$.
\end{corollary}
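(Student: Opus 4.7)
The plan is to deduce this Corollary directly from Theorem~\ref{thm1_e} by exhibiting explicit values of $\ep(v)$ and $r_0(v)$ in the Einstein case, and then substituting them into the expression of $F$ provided by Theorem~\ref{thm1_e}. First I would observe that the Einstein hypothesis $Ric = kg$ with $k$ constant implies $\nabla Ric \equiv 0$, and in particular all covariant derivatives of the Ricci tensor vanish. This frees us from having to control the derivatives of $Ric$ in Anderson's explicit $\ep$-regularity estimates.

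Next I would invoke the explicit form of the $\ep$-regularity theorem from \cite{anderson1989ricci}, which bounds $\ep$ and $r_0$ in terms of a lower bound on the local Sobolev constant of $M$ and the first few covariant derivatives of $Ric$. Because $\nabla^j Ric = 0$ for $j \geq 1$, this dependence collapses onto the local Sobolev constant alone. The local Sobolev constant is in turn estimated explicitly in terms of a lower volume bound, an upper diameter bound, and the Ricci bound, via the results of \cite{anderson1992thel}. Combining these two inputs yields $\ep \geq C_1 v^{-1/2}$ and $r_0 \geq C_2(v,D)$, where $C_1$ depends only on dimension and $C_2$ is an explicit function of $v$ and $D$.

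Having made $\ep$ and $r_0$ explicit functions of $v$ (and $D$), the hypotheses of Theorem~\ref{thm1_e} are met with constants one can write down, so its conclusion gives an explicit $F(v,D,\ep(v),r_0(v,D))$. Composing the explicit dependencies yields the desired explicit bound $F(v,D)$ on the length of a shortest closed geodesic, completing the Corollary.

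I do not expect any single step to be a serious obstacle: the heavy lifting has already been done in Theorem~\ref{thm1_e}, and the Einstein hypothesis is used only to collapse the dependence of the $\ep$-regularity constants onto quantities already controlled by $v$ and $D$. The most delicate bookkeeping will be tracking the precise form of Anderson's explicit estimates from \cite{anderson1989ricci} and \cite{anderson1992thel}, verifying that under the Einstein assumption the implicit constants genuinely depend only on $v$, $D$, and the dimension, and then checking that the substitution into the expression for $F$ from Theorem~\ref{thm1_e} yields a function that is indeed independent of any further auxiliary data.
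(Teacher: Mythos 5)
Your proposal follows essentially the same route as the paper's own (brief) justification of the Corollary: use the Einstein condition to kill the covariant derivatives of $Ric$, invoke Anderson's explicit $\ep$-regularity estimates from \cite{anderson1989ricci} so that $\ep$ and $r_0$ depend only on the local Sobolev constant, bound that Sobolev constant in terms of $v$ (and $D$) via \cite{anderson1992thel}, and then substitute into Theorem~\ref{thm1_e}. The paper states the same chain of reductions in the paragraph immediately preceding the Corollary, so your argument is correct and matches the intended proof.
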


In this work, we will show first the existence of upper bound for the length of the shortest geodesic (Theorem~\ref{thm1_l}), and then, the existence of an explicit upper bound in terms of $v$, $D$, $\ep$ and $r_0$ (Theorem~\ref{thm1_e}). The proof of the Theorem~\ref{thm1_e} is much harder since we are not assuming any uniform lower bound on the radius of the harmonic balls in the covering.

The question of the length of a shortest closed geodesic was initially asked in the paper of M. Gromov in \cite{gromov1983filling}. Gromov asked whether the length of a shortest periodic geodesic in a $n$-dimensional Riemannian manifold $M^n$ can be bounded by $c(n)\vol (M)^{1/n}$. Similar question can also be asked for the diameter $D$ of the manifold. The fact that each closed Riemannian manifold has at least one closed geodesic was proved by L. Lusternik and A. Fet. (See, for example, \cite{klingenberg2012lectures}).

At present, there is no curvature-free upper bound for the length of the shortest closed geodesic on a general Riemannian manifold. However, various results have been obtained under certain geometric assumptions. (See \cite{croke1988area,maeda1994length,nabutovsky2002length,sabourau2004filling,rotman2006length,liokumovich2014lengths}
for the case of 2-spheres, \cite{treibergs1985estimates} for convex surfaces, \cite{ballmann1983existence} for spheres with $1/4-$pinched metric of positive curvature, and \cite{rotman2000upper,nabutovsky2003upper} for compact Riemannian manifold with sectional curvature bounded from below. Also \cite{croke2003universal} would be a nice introduction to readers who are not familiar with this topic). In this paper, we give an upper bound while assuming $M\in \Mm(4,v,D)$. Our theorem is the first result while assuming bounds on the Ricci curvature.

Let us briefly describe the idea of how to obtain an upper bound for the length of the shortest closed geodesic. Let $\OM_p M$ be the space of loops with fixed base point $p\in M$. For the smallest integer $m$ such that $\pi_{m+1}(M)\neq 0$, if one is able to construct a ``small'' non-contractible sphere of dimension $m$ in $\OM_p M$, in other words, a non-contractible map $S^m\ra \OM^L_p$, where $\OM^L_pM$ is the subspace of $\OM_pM$ whose points are loops of length $\leq L$, then by a standard Morse-type argument, there is a closed geodesic of length $\leq L$ occurred as a critical point of the length functional on the free loop space $\Lm M$. A.~Nabutovsky and R.~Rotman show in \cite{nabutovsky2013length} that the obstruction to these ``small'' non-contractible spheres are some ``short'' closed geodesics on the manifold. (See \cite[Corollary 5.4]{nabutovsky2013length}.)

More specifically, let us introduce the following definition of the depth of a loop (See \cite[Definition~7.1\&7.4]{nabutovsky2013length}.) and the width of a homotopy. We say that a smooth curve $\ga:[0,1]\ra M$ is a loop based at some point $p\in M$, if $\ga(0)=\ga(1)=p$.
\begin{definition}[Depth of a loop]\label{def3_d}
Let $M$ be a closed n-dimensional simply-connected Riemannian manifold with diameter $D$ and $\ga:S^1\ra M$ a loop in $M$ based at $p$. We define the depth $S(\ga)$ of $\ga$ to be the infimum of positive number $S$ such that $\ga$ is contractible by a path homotopy through loops of length $\leq \length(\ga)+S$.
We define $S_p(M,L)$ to be $\sup_{\length(\ga)\leq L} S(\ga)$, where the supremum is taken over all loops $\ga$ of length $\leq L$ based at $p$. In other words, $S_p(M,L)$ is the infimum of $S$ such that every loop $\ga$ of length $\leq L$ based at $p$ can be contracted by a homotopy through loops of length $\leq \length(\ga)+S$.
\end{definition}

\begin{definition}[Width of a homotopy]\label{def1_w}
Let $M$ be a Riemannian manifold and $\ga_i:[0,1]\ra M$, $i=1,2$, be two curves in $M$. Suppose $\ga_1$ and $\ga_2$ are homotopic and $H:[0,1]\z[0,1]\ra M$ is a homotopy between $\ga_1$ and $\ga_2$. For every fixed $s\in [0,1]$, the notation $$H_s:=H(s,\cdot):[0,1]\ra M$$
is a curve in $M$ which describes the trajectory of a point $H(s,0)$ during the homotopy. We define the width $\om_H$ of the homotopy $H$ to be
$$\om_H=\max_{s\in [0,1]} \text{length of } H_s.$$
\end{definition}

In \cite{nabutovsky2013length}, by taking the base point $p=q=x$ in Theorem $7.3$ and applying Corollary $5.4$, Nabutovsky and Rotman proved that

\begin{theorem}\label{thm4_q}
Let $M^n$ be a closed Riemannian manifold of diameter $D$ and $p$ be a point in $M$, and $S\geq0$. Assume that there exists $k\in\N$ such that there is no geodesic loop of length in $((2k-1)D,2kD]$ based at $p$ which is a local minimum of the length functional on $\OM_{p} M$ of depth $>S$. Then for every positive integer $m$ every map $f:S^m\ra \OM_{p}M$ is homotopic to a map $\ti{f}:S^m\ra \OM_{p}^{L+o(1)}M$, where $L=((4k+2)m+(2k-3))D+(2m-1)S$.

In this case, the length of a shortest closed geodesic on $M$ does not exceed $L=((4k+2)m+(2k-3))D+(2m-1)S$.
\end{theorem}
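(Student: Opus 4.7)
The statement combines two ingredients: a parameterized length-shortening construction for maps $S^m\to\OM_p M$, and a standard Morse theory argument on the free loop space $\Lm M$ in the form of \cite[Corollary~5.4]{nabutovsky2013length}. My plan is to prove the homotopy statement first and then deduce the closed-geodesic bound.

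The central geometric tool, obtained from the depth hypothesis, is the following shortening lemma: any based loop $\ga$ with $\length(\ga)\in((2k-1)D,2kD]$ admits a path-homotopy to a loop of length $\leq(2k-1)D$ through loops of length $\leq\length(\ga)+S$. Indeed, local minima of the length functional on $\OM_p M$ are precisely geodesic loops based at $p$, and the hypothesis says every such minimum in the window $((2k-1)D,2kD]$ has depth $\leq S$. So one alternates gradient-flow shortening (away from local minima) with the depth-$S$ homotopy at each local minimum encountered, until the length drops below $(2k-1)D$. Combined with the classical ``cut and cap'' trick, which replaces a loop of length $\ell$ based at $p$ by a concatenation of $\lceil\ell/D\rceil$ loops of length $\leq 3D$ by inserting minimizing geodesics from $p$ to evenly spaced cut points, one can in particular contract any single loop of length $\leq 2kD$ to the constant loop through loops of length $\leq(2k-1)D+S$.

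For a general map $f:S^m\to\OM_p M$, I would triangulate $S^m$ so finely that $f$ is essentially constant on each simplex, and then proceed by induction on the skeleton dimension $j$. Having homotoped $f$ on the $(j-1)$-skeleton into loops of controlled length, one extends the homotopy to each $j$-simplex by combining the shortening lemma with a parameterized version of the cut-and-cap decomposition, so that all loops in the family are simultaneously broken into $O(k)$ short sub-loops, pushed through the window $((2k-1)D,2kD]$, and reassembled. Each step up in skeleton dimension incurs an additional cost of order $kD+S$ in the length budget, which after careful bookkeeping yields the claimed $L=((4k+2)m+(2k-3))D+(2m-1)S$. The main obstacle is the consistent gluing of these homotopies across the parameter space: the shortenings built on the boundary of a $j$-simplex must extend over its interior without ever violating the current length bound, and the depth-$S$ homotopies used at local minima must be performed simultaneously in a neighborhood of parameter space without introducing length spikes. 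The $o(1)$ term absorbs the triangulation refinement error.

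For the final assertion, apply the parameterized homotopy to a nontrivial class in $\pi_m(\OM_p M)\cong\pi_{m+1}(M)$; such a class exists for the smallest $m$ with $\pi_{m+1}(M)\neq 0$, which is available because $M$ is simply-connected and not contractible. The resulting $\ti f:S^m\to\OM_p^{L+o(1)}M$ would be null-homotopic if $M$ admitted no closed geodesic of length $\leq L$, by \cite[Corollary~5.4]{nabutovsky2013length}. This contradicts the nontriviality of $f$ and forces the existence of a closed geodesic of length at most $L$.
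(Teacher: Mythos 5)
The paper does not prove this statement; it is quoted verbatim as a result of Nabutovsky and Rotman, obtained by specializing Theorem~7.3 of \cite{nabutovsky2013length} to the case $p=q$ and combining it with Corollary~5.4 of the same reference. There is therefore no ``paper's own proof'' to compare against line by line; what the paper supplies is a citation, and what you have supplied is an attempted reconstruction of the cited argument. You do correctly identify Corollary~5.4 as the ingredient that converts the homotopy statement into the existence of a short closed geodesic, and the overall shape of your sketch --- a shortening lemma driven by the depth hypothesis, a cut-and-cap decomposition into $O(k)$ loops of length $\leq 3D$, and an induction over the skeleton of a fine triangulation of $S^m$ --- is consistent with the strategy in \cite{nabutovsky2013length}.

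That said, as a proof your proposal has a genuine gap exactly where you flag the difficulty. The entire technical content of Theorem~7.3 of \cite{nabutovsky2013length} is the parametric coherence: making the shortenings, the choices of cut points, and the depth-$S$ excursions vary continuously in the parameter $x\in S^m$ while never exceeding the running length budget, and then verifying that the resulting budget is exactly $L=((4k+2)m+(2k-3))D+(2m-1)S$ and not something larger. Your paragraph beginning ``The main obstacle is the consistent gluing\ldots'' names this problem but does not solve it, and ``after careful bookkeeping yields the claimed $L$'' is an assertion rather than a derivation. Without this, the sketch establishes plausibility but not the stated constant. Two smaller imprecisions: local minima of the length functional on $\Omega_p M$ are geodesic loops \emph{or the constant loop}, not ``precisely geodesic loops''; and your shortening lemma tacitly assumes that gradient flow between local minima is unobstructed, which requires an argument that index-$\geq 1$ critical points (saddle geodesic loops in the window) can be circumvented without increasing length --- this is true but needs to be said, since the depth hypothesis as stated controls only local minima. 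If you intend to rely on \cite{nabutovsky2013length} for the parametric extension anyway, the cleaner move is simply to cite Theorem~7.3 and Corollary~5.4 directly, as the paper does, rather than reconstruct them.
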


An important observation in \cite{nabutovsky2013length} is that the depth of $\ga$ is related to the width of an optimal homotopy contracting $\ga$. In fact, we have
\begin{theorem}
If for any closed curve $\ga$ of length bounded by $L$, there exists a contraction of $\ga$ with width bounded by some constant $W$, then
$$S_p(M,L)\leq \max \{2L,2W+2D\}.$$
\end{theorem}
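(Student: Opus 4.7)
Let $\gamma$ be a loop based at $p$ with $\length(\gamma)\leq L$ and let $H:[0,1]^2\to M$ be a contraction of $\gamma$ of width $\om_H\leq W$ to a point $q=H(\cdot,1)$. Denote by $\alpha(t)=H(0,t)$ and $\be(t)=H(1,t)$ the two trajectories of the endpoints of $\gamma$; each is a path from $p$ to $q$ of length at most $W$. Fix a minimizing geodesic $\si$ from $q$ to $p$, of length at most $D$. The plan is to build an explicit based path-homotopy from $\gamma$ to the constant loop at $p$ through loops of length at most $\length(\gamma)+\max\{2L,\,2W+2D\}$.

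The most natural attempt is the collar construction: at time $t$, replace $\gamma$ by $\alpha|_{[0,t]}\x H(\cdot,t)\x\overline{\be|_{[0,t]}}$. This is a based loop at $p$ of length at most $2W+\length(H(\cdot,t))$, starting at $\gamma$ when $t=0$ and ending at the short loop $\alpha\x\bar\be$ (length $\leq 2W$) when $t=1$. The central obstacle is that the bound $\om_H\leq W$ does not directly control the length of the intermediate curves $H(\cdot,t)$, which could a priori be arbitrarily long. To fix this I would first replace $H$ by a modified contraction $\ti H$ of $\gamma$ with the same width bound, but with the additional property that every intermediate curve $\ti H(\cdot,t)$ has length at most $L+O(W)$. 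The modification is a sweep reparametrization of the form $\ti H(s,t)=H(s,\ph(s,t))$, where $\ph(s,\cdot)$ is for each fixed $s$ a monotone parametrization of $[0,1]$, chosen so that the points of $\gamma$ are driven to $q$ in order of increasing $s$: at time $t$, an initial segment of $\gamma$ has already arrived at $q$, a transition zone lies along a single trajectory (of length at most $W$), and the remaining portion is a piece of the original $\gamma$ (of length at most $L$). Because each individual trajectory is merely reparametrized, its length is unchanged and the width bound $W$ is preserved.

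Applying the collar construction to $\ti H$ now yields a based path-homotopy from $\gamma$ to $\alpha\x\bar\be$ through loops of length at most $L+O(W)$, which lies within the target bound $\length(\gamma)+\max\{2L,2W+2D\}$ in both regimes $L\geq W+D$ and $L<W+D$. It remains to contract the short based loop $\alpha\x\bar\be$ of length $\leq 2W$ to the constant loop at $p$. I would insert a trivial out-and-back along $\si$ at the midpoint, obtaining $\alpha\x\bar\be\simeq(\alpha\x\si)\x(\bar\si\x\bar\be)$ through loops of length at most $2W+2D$; each factor is then a based loop at $p$ of length at most $W+D$, which contracts to the constant loop via a straightforward pull-back along the path (intermediate loops again of length $\leq W+D$). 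Concatenating the three stages and reparametrizing the time interval yields the claimed bound $S_p(M,L)\leq\max\{2L,2W+2D\}$.

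The hardest step is the construction of $\ti H$: one must show that an appropriate sweep $\ph$ exists so that the pointwise length estimate $\length(\ti H(\cdot,t))\leq\int\bigl(|\dl_s H|+|\dl_\tau H|\x|\dl_s\ph|\bigr)ds$, applied with arclength-parametrized trajectories and an arclength-parametrized $\gamma$, gives the required bound. The alternative term $2L$ in $\max\{2L,2W+2D\}$ arises in the regime $L\geq W+D$: even the optimal sweep produces collar loops of length up to $L+2W$, which can only be absorbed into $\length(\gamma)+2L\leq 3L$. In the opposite regime $L<W+D$, the dominant contribution comes from the final contraction of $\alpha\x\bar\be$, which uses the $2W+2D$ budget.
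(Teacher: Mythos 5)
Your main ingredient is a ``sweep'' reparametrization $\tilde H(s,t)=H(s,\ph(s,t))$ of the given free contraction, followed by a collar, which would indeed reduce the problem to contracting the residual loop $\al\cdot\bar\be$ of length $\leq 2W$. The paper itself cites this statement to Nabutovsky--Rotman and gives no proof, so you cannot be matching their argument word for word; their route almost certainly runs through subdividing $\ga$ and joining subdivision points to $p$ by minimizing geodesics (which is where the $D$ in $2W+2D$ naturally enters), whereas your route concentrates all of the work in the reparametrized free contraction. That part of the idea is genuinely attractive, and if pushed through carefully it would even supersede the quoted bound (since the residual loop has length $\leq 2W$ independently of $D$). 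But the sweep step itself is not cheap: the term $\int|\dl_s H|\,ds$ in your estimate is evaluated at $(s,\ph(s,t))$, not at $\tau=0$, so ``arclength-parametrized $\ga$'' does not control it. You need $H$ to be (or be approximated by) a $C^1$ map on the compact square so that $\sup|\dl_s H|$ is finite, and then concentrate the transition zone on an interval of length $\ep\to 0$; you gesture at this but do not carry it out.

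The concrete gap, however, is in the last paragraph. For a contraction of a \emph{closed} curve, $H$ factors through $S^1\times[0,1]$, so $H(0,\cdot)=H(1,\cdot)$ and $\al=\be$. In that case $\al\cdot\bar\be=\al\cdot\bar\al$ contracts to $p$ trivially through loops of length $\leq 2W$, and your whole final step is unnecessary. You instead treat $\al$ and $\be$ as distinct, insert $\si\cdot\bar\si$, and assert that each factor $\al\cdot\si$ and $\bar\si\cdot\bar\be$ ``contracts to the constant loop via a straightforward pull-back along the path.'' This is not correct as stated. The loops $\al\cdot\si$ and $\bar\si\cdot\bar\be$ represent mutually inverse classes in $\pi_1(M,p)$ whose product is the class of $\ga$; nothing forces either one to be null-homotopic individually, and when it is, the ``pull-back'' (sliding the endpoint back to $p$ along minimizing geodesics) is not a continuous family: minimizing geodesics to $p$ jump across the cut locus, and in fact such a pull-back \emph{cannot} be made continuous precisely when the loop is not null-homotopic. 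So as written the final step either proves nothing (if $\al\ne\be$ is allowed) or is superfluous (if you notice $\al=\be$). Either way, the proof is incomplete: you should observe that the two endpoint trajectories coincide, drop the $\si\bar\si$ insertion entirely, and concentrate on rigorously establishing the length bound for the swept homotopy.
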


The proof of this inequality can be found in \cite{nabutovsky2003upper} or \cite[Section 8]{nabutovsky2013length}. This observation allows us to convert the problem of obtaining an upper bound for the length of a shortest geodesic in $M$ to the problem of estimating the width of an optimal homotopy contracting any curve $\ga$ in $M$. And in the case of $M\in \Mm(4,v,D)$, for every curve $\ga\su M$, we will prove that one can always contract $\ga$ to a point through a homotopy with controlled width.  Our construction is based on the work \cite{cheeger2014regularity} of Cheeger and Naber, where they constructed a ``bubble tree'' decomposition for the manifolds in $\Mm(4,v,D)$. We are going to describe this decomposition in Section~\ref{sec2}.

In conclusion, in order to obtain an upper bound for the length of the shortest geodesic, we are going to prove that

\begin{theorem}\label{thm2_w}
Let $M\in\Mm(4,v,D)$. Then we have:
\begin{itemize}
\item[A.] There exists an increasing function $W(v,D)$ which only depends on $v$ and $D$ such that any closed curve $\ga:S^1\ra M$ can be contracted to a point through a homotopy with width $\om_H\leq W (v,D)$.
\item[B.] If we further assume that there are no non-trivial closed geodesics on $M$ with length bounded by $4D$ and $M$ satisfies the ``$\ep$-regularity'' Theorem~\ref{epr} for some constants $\ep$ and $r_0$, then one can write down an explicit expression of $W$ in terms of $v$, $D$, $\ep$ and $r_0$.
\end{itemize}
\end{theorem}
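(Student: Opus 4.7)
The strategy for both parts is to build a good covering of $M$ adapted to the bubble tree decomposition of Cheeger--Naber \cite{cheeger2014regularity}, and then to contract any given closed curve piece by piece inside this covering. More precisely, I would cover $M$ by a collection of harmonic balls $B(x_i,r_i)$, on which harmonic coordinates identify $B(x_i,r_i)$ with a Euclidean ball up to bi-Lipschitz constants depending only on $r_0$, together with a controlled number of singular open sets $U_j$, each of which is contractible and has controlled diameter. In Cheeger--Naber's setup the $U_j$ are precisely neighborhoods of the bubble trees that record the finitely many places where the harmonic radius degenerates.

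Given the covering, I would contract a closed curve $\ga:S^1\to M$ as follows. Fix a base point $p$, subdivide $\ga$ into short arcs $\ga_1,\ldots,\ga_N$ each lying in a single cover element (so $N$ is controlled by $\length(\ga)$ divided by a Lebesgue number of the cover), and replace $\ga$ by the concatenation of the ``petal'' loops $\ell_k=\al_k\cdot\ga_k\cdot\al_{k+1}^{-1}$, where $\al_k$ is a minimizing geodesic from $p$ to the starting point of $\ga_k$. On each harmonic ball, the bi-Lipschitz Euclidean structure lets me contract $\ell_k$ by a linear homotopy of width at most $r_i+2D$. On each singular set $U_j$, I would use its explicit contractibility, arising from the fact that its bubble tree structure is diffeomorphic to a standard model, to build a contraction of width at most $\diam(U_j)+2D$. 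Concatenating these petal contractions produces a homotopy from $\ga$ to the constant map at $p$ whose width is bounded by the maximum of $r_i+2D$ and $\diam(U_j)+2D$ over all cover elements, plus lower order contributions coming from the petals.

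For part (A), the existence of $W(v,D)$ then follows because Cheeger--Naber implies that $\Mm(4,v,D)$ decomposes into finitely many diffeomorphism types, on each of which the relevant constants (harmonic radii of regular balls, diameters of singular pieces, number of cover elements) can be chosen uniformly by a compactness-contradiction argument; the maximum of the resulting widths over the finite list of types is the desired $W(v,D)$. For part (B), the additional hypothesis that $M$ carries no closed geodesic of length $\leq 4D$ provides the missing quantitative input: together with Theorem~\ref{epr}, it rules out arbitrarily degenerate ``necks'' in the bubble tree, since any such neck would produce a short closed geodesic via a standard minimax/Lusternik--Fet argument. This yields explicit lower bounds on the $r_i$ and explicit upper bounds on $\diam(U_j)$ and on the number of pieces in terms of $v,D,\ep,r_0$, and so an explicit formula for $W$.

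The main obstacle I expect is showing that each singular piece $U_j$ in the bubble tree can be contracted \emph{with controlled width}, not merely that it is contractible. I would handle this by induction on the depth of the bubble tree: at a leaf, the neighborhood is diffeomorphic to a standard ALE model and admits an explicit radial contraction whose width is linear in the local scale; the inductive step attaches a bubble to its parent along a thin annular neck, and here the width must be kept under control by keeping track of the location of the parent point during the contraction. In part (A) this is possible because there are only finitely many possible bubble tree topologies; in part (B) the no-short-geodesic hypothesis bounds the scales at each level of the tree from below, so the total width of the inductive contraction can be summed to an explicit expression in $v,D,\ep,r_0$.
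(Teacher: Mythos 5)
Your overall architecture (cover $M$ by harmonic balls plus singular pieces from the Cheeger--Naber decomposition, base a curve at a point, and contract petal by petal) matches the paper's in spirit, but both halves of the proposal contain gaps that the paper goes to considerable lengths to avoid.

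The most serious problem is the claim in part~(B) that the no-short-geodesic hypothesis, combined with Theorem~\ref{epr}, ``rules out arbitrarily degenerate necks'' and thereby gives explicit lower bounds on the scales in the bubble tree. This is not justified, and in fact the paper's Remark~\ref{rm1} explicitly notes the opposite: the ratio $r_j^k / \bar r_j^k$ between the outer and inner radius of a neck annulus may fail to be bounded by any function of $v$ and $D$. A long thin neck close to the flat cone $dr^2 + r^2\,ds^2_{S^3/\Ga}$ (with the tip removed) carries no short closed geodesics, so there is no minimax/Lusternik--Fet obstruction to arbitrarily degenerate necks. The paper instead accepts the degeneracy and covers each neck by a controlled number of \emph{trapezoids} (Lemma~\ref{trapezoid contraction}, Lemma~\ref{lem2_c}); when approximating a curve across a neck, it uses the finiteness $|\Ga_j^{k+1}| \le C(v,D)$ together with Gromov's generators argument (Lemma~\ref{Gromov lemma}, used inside Lemma~\ref{approx in neck}) to replace the piece of the curve in the neck by a short curve on a fixed cross-sectional sphere. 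The no-short-geodesic hypothesis is used in the paper only to guarantee that Birkhoff curve shortening of a loop of length $\le 4D$ terminates at a point rather than at a closed geodesic; the explicit constant then comes from a tree-pruning argument on simplicial approximations in a finite graph, not from any lower bound on neck scales. Your inductive contraction of the bubble tree ``by keeping track of the location of the parent point'' also needs the same quantitative control over neck widths and would face the same obstruction.

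A second, subtler gap affects both parts: you propose to subdivide $\ga$ into arcs $\ga_1,\dots,\ga_N$ each in a single cover element, with ``$N$ controlled by $\length(\ga)$ divided by a Lebesgue number of the cover.'' Because the harmonic radius can degenerate to zero as one approaches the necks, there is no uniform positive Lebesgue number for the cover depending only on $v$ and $D$, so $N$ is not bounded in terms of $\length(\ga)$ alone. The paper handles this by passing to a simplicial approximation $\tilde\ga$ in a finite graph $\Si$ (Lemma~\ref{lm1}), then truncating the unbounded simplicial length by decomposing $\tilde\ga$ into petals of simplicial length $\le 2\tilde N^2 + 1$ (Lemma~\ref{lm2}), and finally bounding the homotopy in the finite nerve $\Nn(M)$ (Lemma~\ref{lm4}, Lemma~\ref{lm5}). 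Relatedly, your compactness-contradiction justification for part~(A) is weaker than what is needed: diffeomorphism finiteness of $\Mm(4,v,D)$ does not give precompactness of the space of metrics on a fixed diffeomorphism type, so uniform constants do not follow for free. The finiteness the paper actually exploits is purely combinatorial (there are finitely many simplicial complexes on $\le\tilde N$ vertices, and finitely many short simplicial loops in each), which is why Lemma~\ref{lm5} gives a bound $F(m,l)$ without any analytic compactness.
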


In fact, from the prove of the above theorem, we see that Theorem~\ref{thm2_w}B is true in any dimension as long as the manifold $M$ satisfies \cite[Theorem~8.6]{cheeger2014regularity}. In Anderson's work \cite[Theorem~2.6]{anderson1990convergence}, in the case of dimension $n$, if one assume that the integral of curvature satisfies
\begin{equation}\label{equ2}
\int_M |R|^{n/2} dV \leq C,
\end{equation}
then one can still obtain the result of \cite[Theorem~8.6]{cheeger2014regularity}. By applying \cite[Lemma~8.61]{cheeger2014regularity}, one can obtain a similar bubble tree decomposition for $M$ as in \cite[Theorem~8.64]{cheeger2014regularity}. Therefore, our main Theorem~\ref{thm1_l} can be generalized as following.
\begin{theorem}
Let $M$ be a closed $n$-dimensional simply-connected Riemannian manifold with Ricci curvature $|Ric|\leq n-1$, volume $\vol(M)>v>0$, and diameter $\diam(M)\leq D$. Suppose that the curvature tensor $R$ of $M$ satisfies (\ref{equ2}), then the length of the shortest closed geodesic on $M$ is bounded by a function $F(v,D)$ which only depends on $v$ and $D$.
\end{theorem}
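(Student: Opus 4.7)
\medskip
\noindent\textbf{Proof proposal.}
The plan is to follow the strategy used for Theorem~\ref{thm1_l}, with the four-dimensional inputs replaced by their higher-dimensional analogues made available by hypothesis (\ref{equ2}). First I would establish the dimension-$n$ analogue of Theorem~\ref{thm2_w}A: there exists $W=W(v,D)$, also depending on the constant $C$ appearing in (\ref{equ2}), such that any closed curve $\ga:S^1\ra M$ admits a null-homotopy $H$ with $\om_H\leq W$. The construction in the four-dimensional case proceeds by covering $M$ with a controlled number of harmonic balls, on which loops can be contracted in a quasi-Euclidean manner, together with a controlled collection of contractible neighborhoods of the bubbles coming from the Cheeger--Naber decomposition. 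I would reproduce the same covering in dimension $n$ by combining Anderson's $\ep$-regularity \cite[Theorem~2.6]{anderson1990convergence}, which is valid in every dimension once (\ref{equ2}) holds, with \cite[Lemma~8.61]{cheeger2014regularity}, to obtain the bubble tree structure of \cite[Theorem~8.64]{cheeger2014regularity} in arbitrary dimension. The diffeomorphism finiteness that comes out of this yields a uniform bound on the number of bubbles and a finite list of diffeomorphism types for the complementary pieces, so the contraction procedure used in the proof of Theorem~\ref{thm2_w}A goes through essentially unchanged.

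With the width bound in hand, I would invoke the observation cited in the paper that width controls depth: for every loop $\ga$ based at $p\in M$ of length at most $L$ one has $S(\ga)\leq \max\{2L,\;2W(v,D)+2D\}$. Feeding this into Theorem~\ref{thm4_q} with the smallest integer $m\geq 1$ for which $\pi_{m+1}(M)\neq 0$ (which exists with $m\leq n-1$ by Hurewicz, since $M$ is closed and simply-connected), one either detects directly a short closed geodesic satisfying the hypothesis of Theorem~\ref{thm4_q}, or contracts a nontrivial $f:S^m\ra\OM_pM$ into a sublevel set $\OM_p^{L'}M$ with $L'=L'(v,D)$. A standard Lusternik--Fet minimax argument on the free loop space then produces a closed geodesic of length at most $L'$, giving the desired bound $F(v,D)$.

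The main obstacle, and the only genuinely dimension-sensitive step, is the extension of the bubble tree decomposition of \cite[Theorem~8.64]{cheeger2014regularity} from $n=4$ to general $n$ under the integral bound (\ref{equ2}). Both the $\ep$-regularity statement and the neck decomposition must be checked, along with the uniform control on the number and diffeomorphism types of the bubbles. Once this decomposition is firmly in place, the covering construction and the width estimate are essentially dimension-free, and the conclusion via Theorem~\ref{thm4_q} is immediate. The resulting bound $F$ depends a priori on $v$, $D$ and the constant $C$ in (\ref{equ2}); treating $C$ as part of the fixed background data recovers the stated dependence on $v$ and $D$ alone.
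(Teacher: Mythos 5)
Your proposal matches the paper's own reasoning: the paper also argues that Anderson's $\ep$-regularity under (\ref{equ2}) yields \cite[Theorem~8.6]{cheeger2014regularity} in dimension $n$, that \cite[Lemma~8.61]{cheeger2014regularity} then gives the analogue of the bubble tree decomposition \cite[Theorem~8.64]{cheeger2014regularity}, and that the covering, width, and depth arguments leading through Theorem~\ref{thm4_q} are dimension-free once the decomposition is in hand. Your observation about the implicit dependence of $F$ on the constant $C$ in (\ref{equ2}) is also correct and worth recording, as the paper's statement elides this point.
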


The idea of the proof of Theorem~\ref{thm2_w} is the following. Given a closed contractible curve $\ga:[0,1]\ra M$, we would like to first contract $\ga$ through a family of curves $\{\ga_j\}$ so that the width of the homotopy between each $\ga_j$ and $\ga_{j+1}$ is bounded in terms of $D$. If the number of the curves in the family $\{\ga_j\}$ is bounded in terms of $v$ and $D$, then we are done. However, in general, the number of the curves is not related to $v$ and $D$. Therefore, we are going to construct a new homotopy through bounded number of curves.

The observation is that by the result of Cheeger and Naber, we may cover the manifold $M$ by finitely many harmonic balls and some (thin) contractible sets. We are going to construct {a graph $\Si$}, which is essentially the 1-skeleton of the nerve of this covering, so that we can find the approximations of the curves $\ga_j$ in this graph $\Si$ with bounded length. Here the approximation of a curve $\ga_j$ in $\Si$ means a homotopy between $\ga_j$ and a curve in $\Si$ with controlled width.

Now for any homotopy that contracts the curve $\ga$, we can find an approximation of this homotopy by looking at the approximation of the curves during this homotopy. The new ``optimal'' homotopy can be obtained by removing the curves with the same approximations in the graph. And then the total number of the curves is bounded in terms of the number of the curves in {the graph $\Si$}, which can be estimated by the number $\ti{N}(v,D)$ of the sets in this covering of $M$.

The difficult part of the proof is to bound the length, or more precisely, the "simplicial length" (see Definition~\ref{def4_s}) of the approximation of the curve $\ga_j$, because, for example, there is no lower bound for the radius of the harmonic balls in the covering of the manifold. In other words, if we are trying to approximate a curve with some short geodesic segments, we may end up with an uncontrolled number of the segments in the approximation.

To solve this problem, our observation is that during the homotopy, if we decompose a curve $\ga$ into a wedge $\vee_{i}\al_i$ of some curves $\al_i$ with a fixed base point and let $G_i$ be the contraction of each $\al_i$, then the width contracting $\ga$ is bounded by $2\cdot \max_i \om_{G_i}$ (See Lemma~\ref{lm8}). In this case, we only need to bound the length of the approximation of each $\al_i$, instead of the entire curve $\ga$. We are going to show in Lemma~\ref{lm6} and Lemma~\ref{lm7} that there is a desired decomposition of the curve $\ga$ so that we can control the length of the approximation of the curves in $\Si$.

\subsection{Structure of this paper.}
In Section~\ref{sec2}, we are going to introduce some definitions and results about non-collapsing manifolds with bounded diameter and Ricci curvature in \cite{cheeger2014regularity}. We will be focusing on the case of dimension 4. We are also going to show some elementary results about the contractibility of certain metric balls which will be used in the rest of our proof.

In Section~\ref{sec3}, we will first construct a {graph} and develop a certain type of the approximation of homotopies in this {graph}  as we mentioned above. We will then show several results about the upper bound of the length of the different type of the curves in the approximation. Some techniques we used in Lemma~\ref{lm1} to Lemma~\ref{lm5} are due to R.~Rotman and her work \cite{rotman2000upper}.

In the last section, we will prove our main results Theorem~\ref{thm2_w}A, B, and Theorem~\ref{thm1_l}. The proof of Theorem~\ref{thm2_w}A and B will be separated and will be based on different methods.

\section{Harmonic radius and finite diffeomorphism type theorem in dimension 4}\label{sec2}
In this section we introduce some definitions and results about non-collapsing manifolds with bounded diameter and Ricci curvature in \cite{cheeger2014regularity}, which will be used to proof our main results Theorem~\ref{thm1_l} and Theorem~\ref{thm2_w}. Note that their work is based on theory of manifolds with Ricci curvature bounded below developed by J.~Cheeger and T.~Colding \cite{colding1997ricci}, \cite{cheeger1996lower}, \cite{cheeger1997structure} and work of M.~Anderson \cite{anderson1989ricci}, \cite{anderson1992thel} and Cheeger and Anderson \cite{anderson1991diffeomorphism}.

We first recall the notion of the harmonic radius. (See \cite[Definition~2.9]{cheeger2014regularity} or \cite[Chapter 10.5]{petersen2006riemannian}.)

\begin{definition}\label{def2_h}
Let $M^n$ be an $n-$dimensional Riemannian manifold and $x$, a point in $M$. We define the harmonic radius $r_h(x)$ to be the largest $r>0$ such that there exists a map $\F: B_r(0^n)\ra M$, where $0^n\in \R^n$ is the origin, such that:
\begin{enumerate}
\item $\F$ is a diffeomorphism onto its image with $\F(0^n)=x$.
\item $\De_g x^l=0$, $l=1,\dots,n$, where $x^l$ are the coordinate functions and $\De_g$ is the Laplace-Beltrami operator.
\item\label{equ1} If $g_{ij}=\F^*(g)$ is the pullback metric on $B_r(0^n)$, then
$$
||g_{ij}-\de_{ij}||_{C^0,B_r(0^n)}+r||\dl_k g_{ij}||_{C^0,B_r(0^n)}\leq 10^{-3}.
$$
\end{enumerate}
\end{definition}

The above map $\F:B_r(0^n)\ra M$ is also called a {\sl harmonic coordinate}. The condition (3) above tells us that $\F$ is a lipschitz map with the lipschitz constant bounded by $1.001$. Therefore, we are able to estimate the ``contractibility radius'' at $x$ in terms of the harmonic radius by the following lemma.

\begin{lemma}\label{lem1_c}
Let $M$ be a Riemannian manifold and $x\in M$. Suppose $r_h(x)>0$ is the harmonic radius at $x$. Let $R(x)=\frac{1}{8\sqrt{1+2\cdot10^{-3}}}\x r_h(x)$. Then the metric ball $B_{R(x)}(x)$ is contractible in $M$.

Furthermore, for any closed curve $\ga:[0,1]\ra B_{R(x)}(x)$, there exists a contraction $H:[0,1]\z [0,1]\ra M$ such that $H(\cdot,0)=\ga$, $H(\cdot,1)=x$ and the width of the homotopy $\om_H\leq D$.
\end{lemma}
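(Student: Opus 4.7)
The plan is to transplant the problem into the harmonic chart $\F:B_{r_h(x)}(0^n)\to M$ of Definition~\ref{def2_h}, where contractibility is witnessed by the straight-line homotopy $(p,t)\mapsto(1-t)p$. The whole argument reduces to three ingredients: that $\F$ is almost an isometry, that the straight-line segments we integrate stay inside the chart, and that the diameter bound on $M$ turns a bound in terms of $r_h(x)$ into a bound in terms of $D$.

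First, I would convert condition~(\ref{equ1}) into the bi-Lipschitz estimate
\[
|v|_\delta/\sqrt{1+2\cdot 10^{-3}}\;\leq\; |d\F(v)|_g\;\leq\; \sqrt{1+2\cdot 10^{-3}}\,|v|_\delta
\]
valid at every point of $B_{r_h(x)}(0^n)$; both inequalities follow by diagonalising the symmetric matrix $(g_{ij})$, whose eigenvalues lie in $[1-10^{-3},1+10^{-3}]$. Integrating along Euclidean radii gives the forward inclusion $\F(B_r(0^n))\subset B_{r\sqrt{1+2\cdot 10^{-3}}}(x)$; a boundary-crossing argument — any $g$-geodesic emanating from $x$ that exits $\F(B_r(0^n))$ must have $g$-length at least $r/\sqrt{1+2\cdot 10^{-3}}$, since its Euclidean pullback has to travel $\delta$-distance $r$ — gives the reverse inclusion $B_{r/\sqrt{1+2\cdot 10^{-3}}}(x)\subset \F(B_r(0^n))$. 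Setting $r=r_h(x)/8$ yields in particular $B_{R(x)}(x)\subset \F(B_{r_h(x)/8}(0^n))$.

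With this in hand, I define
\[
F:B_{R(x)}(x)\z[0,1]\to M,\qquad F(y,t):=\F\bigl((1-t)\F^{-1}(y)\bigr).
\]
By the previous step, for every $y\in B_{R(x)}(x)$ the entire Euclidean segment $\{(1-t)\F^{-1}(y):t\in[0,1]\}$ lies in $B_{r_h(x)/8}(0^n)\subset B_{r_h(x)}(0^n)$, so $F$ is well defined; moreover $F(y,0)=y$ and $F(y,1)=\F(0^n)=x$, which exhibits $B_{R(x)}(x)$ as contractible in $M$. For a loop $\ga:[0,1]\to B_{R(x)}(x)$, set $H(s,t):=F(\ga(s),t)$. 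Each path $H_s$ is the $\F$-image of a Euclidean segment of $\delta$-length $|\F^{-1}(\ga(s))|_\delta\leq r_h(x)/8$, so by the upper Lipschitz bound $\length(H_s)\leq \sqrt{1+2\cdot 10^{-3}}\,r_h(x)/8$.

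To close the bound at $D$, I would observe that $\F(B_{r_h(x)}(0^n))$ is diffeomorphic to an open Euclidean $n$-ball, hence non-compact, hence a proper subset of the compact manifold $M$; any $y\in M\setminus\F(B_{r_h(x)}(0^n))$ satisfies $d_g(y,x)\leq D$ by the diameter bound and $d_g(y,x)\geq r_h(x)/\sqrt{1+2\cdot 10^{-3}}$ by the reverse inclusion, which together force $r_h(x)\leq D\sqrt{1+2\cdot 10^{-3}}$. Substituting gives $\om_H\leq D(1+2\cdot 10^{-3})/8<D$. The only real obstacle is the bookkeeping of the distortion constants and the verification in the preceding paragraph that the straight-line homotopy stays inside the chart; no delicate geometric input is needed beyond the bi-Lipschitz character of $\F$ and the non-compactness of an open Euclidean ball.
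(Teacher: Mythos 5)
Your proposal is correct and follows essentially the same route as the paper: work in the harmonic chart $\F$, use the bi-Lipschitz distortion bound from condition~(3) of Definition~\ref{def2_h} to control distances, show that $B_{R(x)}(x)$ sits inside a Euclidean ball of the chart, and pull back the straight-line radial contraction, with the width then bounded by $r_h(x)$ and hence by $D$. The only cosmetic difference is that you justify $r_h(x)\lesssim D$ explicitly via the non-compactness of the image of the chart (a point the paper takes for granted) and you phrase the chart-inclusion via a boundary-crossing argument where the paper uses a minimizing geodesic to the boundary of $\F(\overline{B_{r_h(x)/2}(0^n)})$; both give the same conclusion.
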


\begin{proof}
Let $\F: B_{r_h(x)}(0^n)\ra M$ be the harmonic coordinate at $x\in M$ such that $x=\F(0^n)$. Let $p\in \dl \F(\overline{B_{r_h(x)/2}(0^n)})$ be the point realizing the minimum distance between $x$ and the boundary of the closure $\dl\F(\overline{B_{r_h(x)/2}(0^n)})$. We connect $p$ and $x$ by a minimizing geodesic $\ga$. Note that $\ga$ must be contained in $\F(\overline{B_{r_h(x)/2}(0^n)})$.

By \eqref{equ1} in Definition~\ref{def2_h}, the length of $\ga$ satisfies
$$\length(\ga)\geq \frac{\length(\F^{-1}(\ga))}{\sqrt{1+2\x 10^{-3}}}\geq \frac{r_h(x)}{2\sqrt{1+2\x 10^{-3}}}.$$
Let $R(x)=\frac{r_h(x)}{8\sqrt{1+2\x 10^{-3}}}$. We show that the ball $B_{R(x)}(x)$ can be contracted to $x$ within the ball $\F(\overline{B_{r_h(x)/2}(0^n)})$. Indeed, let $k:\overline{B_{r_h(x)/2}(0^n)}\z [0,1]\ra \overline{B_{r_h(x)/2}(0^n)}\su \R^n$ be the contraction defined by $k(y,t)=yt$. Then $H=\F\y k\y(\F^{-1}\z id)$ is a homotopy contracting $\F(\overline{B_{r_h(x)/2}(0^n)})$ to $x\in M$. For any $y\in \overline{B_{r_h(x)/2}(0^n)}$, the length of the trajectory satisfies
$$\length(\F\y k(y,\cdot))\leq \length\left(\F\y k\left(\frac{r_h(x)\x y}{2|y|},\cdot\right) \right)\leq\frac{r_h(x)}{2\sqrt{1-10^{-3}}}\leq r_h(x) \leq D.$$

Note that $\overline{B_{R(x)}(x)}\su \F(\overline{B_{r_h(x)/2}(0^n)})$. We restrict the homotopy $H$ to $B_{R(x)}(x)$ and the width $\om_H\leq D$.
\end{proof}

In \cite{cheeger2014regularity}, J.~Cheeger and A.~Naber proved the finiteness of the number of diffeomorphism type of manifolds $M$ of dimension $4$ with $|Ric_M|\leq3$, $\vol(M)>v>0$ and $\diam(M)<D$. This theorem is based on the construction of the ``bubble tree'' decomposition of the manifolds (\cite[Theorem 8.64]{cheeger2014regularity}), which decomposes $M$ into a union of body regions and neck regions. The proofs of our main results are also based on this construction. Therefore, let us briefly describe this process below. We first start with the construction of a body region.

Up to rescaling, we cover the manifold $M$ by metric balls $\{B_1(x_i)\}$ such that the balls in $\{B_{1/4}(x_i)\}$ are pairwise disjoint. By a standard volume comparison argument, there are at most $N_0(v,D)$ such balls. In each ball $B_1(x_i)$, there exist scales $r_j^1>r_0(v,D)$, an integer $N_1\leq N(v,D)$ and a collection of balls $\{B_{r_j^1}(x_j^1)\}_{j=1}^{N_1}$ such that
if $x\in B_1(x_i)\setminus \cup_j B_{r_j^1}(x_j^1)$, then the harmonic radius $r_h(x)\geq r_0(v,D)$. Here $r_0$ and $N$ are some constants that only depend on $v$ and $D$. Furthermore, the balls $\{ B_{2r_j^1}(x_j^1)\}$ are disjoint. In total, there are at most $N_0\x N$ such balls.

We define the first body $\Bb^1=M\setminus \cup_j B_{r_j^1}(x_j^1)$. Note that the manifold $M=\Bb^1\cup (\cup_j B_{2r_j^1}(x_j^1))$. Next, we construct the first neck region. In $B_{2r_j^1}(x_j^1)$, there is a scale $\bar{r}_j^1$, and $\ep(v)<0.1$, such that there is a neck region neck $\Nn_j^2$ satisfying
\begin{equation*}
A_{\bar{r}_j^1 /2, 2 r_j^1} (x_j^1)   \subset \Nn_j^2 \subset A_{(1-\ep)\bar{r}_j^1 /2, 2(1+\ep) r_j^1} (x_j^1),
\end{equation*}
where $A_{r,R} (x_j^1)$ is a metric annulus centered at $x_j^1$ in $M$. As proved in Theorem~8.6 and Lemma~8.40 in \cite{cheeger2014regularity}, the geometry of these $\Nn_j^2$ are controlled. In other words, there is a diffeomorphism $\Phi_j^2: A_{\bar{r}_j^1/2,2r_j^1}(0)\ra \Nn_j^2$, where $A_{\bar{r}_j^1/2,2r_j^1}(0)$ is an annulus centered at $0\in \R^4/\Ga_j^2$ for some finite discrete subgroup $\Ga_j^2\su O(4)$. And if $g_{ij}={\Phi_j^2}^*g$ is the pullback metric, then
\begin{equation*}
||g_{ij}-\de_{ij}||_{C^0}+\bar{r}_j^1\cdot|| \dl g_{ij}||_{C^0}\leq \ep(v)<0.1
\end{equation*}
The order of $|\Ga_j^2|$ is bounded by a function $C(v,D)$ which only depends on $v$ and $D$.

We repeat the above construction to each ball $B_{2\bar{r}_j^1}(x_j^1)$ and we define the second body regions $\Bb_j^{2}=B_{2\bar{r}_j^1}(x_j^1)\setminus \cup_i B_{r_i^{2}}(x_i^{2})$ and the second neck region $\Nn_j^2$ that connects $\Bb_j^2$ and $\Bb^1$.

In general, we have the bodies
\begin{equation}\label{equ3}
\Bb_j^{k+1}=B_{2\bar{r}_j^k}(x_j^k)\setminus \cup_i B_{r_i^{k+1}}(x_i^{k+1}),
\end{equation}
such that when $x\in \Bb_j^{k+1}$, then $r_h(x)\geq r_0(v,D)\x \diam(\Bb_j^{k+1})$.
And the neck region $\Nn_j^{k+1}$ that connects the body $\Bb_j^{k+1}$ and  $\Bb_i^{k}$, which satisfy
\begin{equation}\label{equ4}
A_{\bar{r}_j^k /2, 2 r_j^k} (x_j^k)   \subset \Nn_j^{k+1} \subset A_{(1-\ep)\bar{r}_j^k /2, 2(1+\ep) r_j^k} (x_j^k),
\end{equation}
and there is a diffeomorphism
\begin{equation}\label{equ4.1}
\Phi_j^{k+1}: A_{\bar{r}_j^k/2,2r_j^k}(0)\ra \Nn_j^{k+1}
\end{equation}
where $0\in \R^4/\Ga_j^{k+1}$ with $\Ga_j^{k+1}\su O(4)$ satisfying
\begin{equation}\label{equ4.5}
|\Ga_j^{k+1}| \leq C(v,D).
\end{equation}

Moreover, if $g_{ij}={\Phi_j^{k+1}}^*g$ is the pullback metric, then
\begin{equation}\label{equ5}
||g_{ij}-\de_{ij}||_{C^0}+\bar{r}_j^k\cdot|| \dl g_{ij}||_{C^0}\leq \ep(v)<0.1.
\end{equation}

 The reason why this construction ends in finitely many steps is because if for some indices $j,k,l$, the intersection $\Nn_l^{k+1}\cap \Bb_j^k\neq \emptyset$, then $|\Ga_l^{k+1}|\leq|\Ga_j^k|-1$. Therefore, after at most $|\Ga_j^2|\leq C(v,D)$ many steps, this process ends. As a result, we have the following decomposition theorem.

\begin{theorem}[\cite{cheeger2014regularity}, Theorem 8.64]\label{thm3_f}
Let $M$ be a 4-dimensional Riemannian manifold with $|Ric|\leq 3$, $\vol(M)>v>0$ and $\diam(M)\leq D$. Then $M$ admits a decomposition into bodies and necks
$$M=\Bb^1 \cup \bigcup_{j_2=1}^{N_2} \Nn_{j_2}^2\cup \bigcup_{j_2=1}^{N_2} \Bb_{j_2}^2\cup\dots\cup \bigcup_{j_k=1}^{N_k} \Nn_{j_k}^k\cup \bigcup_{j_k=1}^{N_k} \Bb_{j_k}^k,$$

such that the following conditions are satisfied:
\begin{enumerate}
\item If $x\in \Bb_i^j$, then $r_h(x)\geq r_0(v,D)\cdot \diam (\Bb_i^j)$, where $r_h$ is the harmonic radius and $r_0$ is a constant that only depends on $v$ and $D$.
\item Each $\Nn_i^j$ is diffeomorphic to $\R\z S^3/\Ga_i^j$ for some $\Ga_i^j\su O(4)$ with the order $|\Ga_i^j|<C(v,D)$.
\item $\Nn_i^j\cap \Bb_i^j$ is diffeomorphic to $\R\z S^3/\Ga_i^j$.
\item $\Nn_i^j\cap \Bb_{i'}^{j-1}$ is either empty or diffeomorphic to $\R\z S^3/\Ga_i^j$.
\item Each $N_i\leq N(v,D)$ and $k\leq k(v,D)$.
\end{enumerate}
\end{theorem}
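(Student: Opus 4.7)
The plan is to follow the inductive construction outlined in the paragraphs preceding the statement, which is the strategy of Cheeger and Naber. I would proceed in three stages: first, produce the initial body--neck decomposition from a maximal packing of $M$ by unit balls; second, iterate the construction inside each ``bad'' sub-ball to produce deeper bodies and necks; third, show that the iteration terminates after finitely many steps, and verify the five listed properties along the way.

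To start, cover $M$ by unit balls $\{B_1(x_i)\}$ chosen so that $\{B_{1/4}(x_i)\}$ are pairwise disjoint. Bishop--Gromov volume comparison, applied with $|Ric|\leq 3$ and $\vol(M)>v$, bounds the number of such balls by some $N_0(v,D)$. Inside each $B_1(x_i)$ I would invoke the quantitative stratification and $\epsilon$-regularity machinery of \cite{cheeger2014regularity} (itself built on \cite{colding1997ricci,cheeger1997structure,anderson1990convergence}) to extract a controlled collection $\{B_{r_j^1}(x_j^1)\}_{j=1}^{N_1}$ with $N_1\leq N(v,D)$ and $r_j^1>r_0(v,D)$, covering the set of points of small harmonic radius; the complement is the first body $\Bb^1$ on which $r_h$ is uniformly bounded below, giving property (1) at this level.

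Next, around each bad ball $B_{r_j^1}(x_j^1)$ I would apply the neck regularity theorem \cite[Theorem~8.6]{cheeger2014regularity}: for a suitable scale $\bar r_j^1$ one obtains an almost-isometric diffeomorphism $\Phi_j^{2}$ from the annulus $A_{\bar r_j^1/2,\,2 r_j^1}(0)\subset \R^4/\Ga_j^{2}$ onto the neck $\Nn_j^{2}$, with $\Ga_j^{2}\subset O(4)$ finite and $C^0$/first-derivative control \eqref{equ5} on the pullback metric. This yields properties (2)--(4) at the first level. One then restarts the construction inside each ball $B_{2\bar r_j^1}(x_j^1)$, producing bodies $\Bb_j^{2}$ and then necks $\Nn_j^{3}$, and iterates as in \eqref{equ3}--\eqref{equ5}.

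The main obstacle, and the crux of the argument, is the termination of this iteration in a bounded number of steps $k\leq k(v,D)$, together with the uniform bound $|\Ga_i^j|\leq C(v,D)$ in \eqref{equ4.5}. The key mechanism is a group-reduction lemma: whenever a deeper neck $\Nn_l^{k+1}$ meets a shallower body $\Bb_j^{k}$, one has $|\Ga_l^{k+1}|\leq |\Ga_j^{k}|-1$. This would be established by a tangent-cone and volume-pinching analysis, using the Cheeger--Colding splitting theorem to identify cross-sectional tangent cones $\R^4/\Ga_l^{k+1}$ as proper quotients of $\R^4/\Ga_j^{k}$; the initial bound $|\Ga_j^{2}|\leq C(v,D)$ follows from an $L^2$-curvature estimate on unit-scale balls together with Anderson's $\epsilon$-regularity. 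Because each descent strictly decreases $|\Ga|$, the process halts after at most $C(v,D)$ generations, delivering property (5) and completing the decomposition.
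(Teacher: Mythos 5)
Your proposal is correct and mirrors the paper's own treatment almost exactly: the paper does not reprove Theorem~\ref{thm3_f}, but instead cites \cite[Theorem 8.64]{cheeger2014regularity} and precedes the statement with the same expository sketch you give (maximal packing of unit balls, extraction of bad balls via the harmonic-radius estimate, the neck diffeomorphism $\Phi_j^{k+1}$ with the $C^0$ control~\eqref{equ5}, and termination via the group-order drop $|\Ga_l^{k+1}|\leq|\Ga_j^k|-1$). Both accounts defer the substantive analysis — the neck regularity theorem, quantitative stratification, and the group-reduction lemma — to the Cheeger--Naber paper, so there is no meaningful divergence between your approach and the paper's.
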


\begin{remark}
In the statement of the above theorem, the constants $r_0(v,D)$, $N(v,D)$, $k(v,D)$ and $C(v,D)$ can be explicitly computed in terms of the constants in the ``$\ep$-regularity'' theorem \cite[Theorem2.11]{cheeger2014regularity} for manifolds in $\Mm(4,v,D)$.
\end{remark}

\begin{remark}\label{rm1}
For each neck $\Nn_j^{k+1}$, the ratio between the inner and outer radius of the annulus $r_j^k / \bar{r}_j^k$ may not be bounded above by any function of $v$ and $D$. Hence one may not cover a neck region with contractible metric balls described in Lemma~\ref{lem1_c} so that the number of balls in the covering is bounded above by a function of $v$ and $D$.
\end{remark}

Based on Theorem~\ref{thm3_f}, we are going to construct an open covering of $M$ so that the total number of the open sets in the covering is bounded by some function that only depends on $v$ and $D$. First note that each body $\Bb_j^k$ is covered by finitely many contractible balls as described in Lemma~\ref{lem1_c}. However, as described in Remark~\ref{rm1}, the metric annulus $A_{2\bar{r}_j^k, r_j^k} (x_j^k)$ in the neck region cannot be covered in the same way as the body regions. Instead, we are going to cover it by some trapezoids, such that each trapezoid is contractible in some larger trapezoids, which will be defined below.
\begin{definition}
For each neck $\Nn_j^{k+1}$, where $k\geq 1$, let $r_c(k+1,j)$ be the convexity radius of $S^3 / \Ga_j^{k+1}$ equipped with the standard metric $ds^2_{k+1,j}$. We cover $S^3 / \Ga_j^{k+1}$ by $B_{r_c(k+1,j)/4}(z_i)$ with $z_i \in S^3 / \Ga_j^{k+1}$ in an efficient way, so that the balls $B_{r_c(k+1,j)/16}(z_i)$ are pairwise disjoint. We define
\begin{align}
\label{equ5.1}& K^{k+1}_{j,i}=(2\sqrt{1-\ep}\bar{r}_j^k,(2-\sqrt{1-\ep})r_j^k) \times  B_{r_c(k+1,j)/4}(z_i), \\
\label{equ5.2}& \bar{K}^{k+1}_{j,i}=(\bar{r}_j^k/2,2r_j^k) \times  B_{r_c(k+1,j)}(z_i),
\end{align}
with the metric $dg^2_{k+1,j}=dr^2+r^2 ds^2_{k+1,j}$, where $r \in (\bar{r}_j^k/2,2r_j^k) $ and $\ep=\ep(v)<0.1$.
\end{definition}
Now the annulus $A_{\bar{r}_j^k/2,2r_j^k}(0)$ is covered by the open sets $\{ \bar{K}^{k+1}_{j,i} \}$, where $ 0\in \R^4/\Ga_j^{k+1}$. Moreover, $\bar{K}^{k+1}_{j,i}$ is a convex open subset of $A_{\bar{r}_j^k/2,2r_j^k}(0)$.

\begin{definition}
We define a trapezoid in $M$ to be
\begin{equation}\label{equ6}
T^{k+1}_{j,i} = \Phi_j^{k+1}(K^{k+1}_{j,i}).
\end{equation}
and a large trapezoid
\begin{equation}\label{equ7}
\bar{T}^{k+1}_{j,i} = \Phi_j^{k+1}(\bar{K}^{k+1}_{j,i})
\end{equation}
where $\Phi_j^{k+1}$ is the diffeomorphism~\eqref{equ4.1}.
\end{definition}

Note that since different necks are disjoint, trapezoids in different necks do not intersect. Moreover if $T^{k+1}_{j,i} \cap T^{k+1}_{j,l} \neq \emptyset$,  then $T^{k+1}_{j,i} \cup T^{k+1}_{j,l} \subset \bar{T}^{k+1}_{j,i}$.

Our next Lemma can be viewed as an analogue of Lemma~\ref{lem1_c} for the trapezoids.

\begin{lemma} \label{trapezoid contraction}
Let $T^{k+1}_{j,i}\su \bar{T}^{k+1}_{j,i}\su \Nn_j^{k+1} \su M$ be trapezoids defined in \eqref{equ6} and \eqref{equ7}. Then,
\begin{enumerate}
\item
Any two points $x$ and $y$ in $T^{k+1}_{j,i}$ can be connected by a curve in $T^{k+1}_{j,i}$ with length less than $3D$. Any two points $x$ and $y$ in $\bar{T}^{k+1}_{j,i}$ can be connected by a curve in $\bar{T}^{k+1}_{j,i}$ with length less than $3D$.
\item
For any two points $x$ and $y$ in $\bar{T}^{k+1}_{j,i}$, let $\ga$ be a minimizing geodesic connecting $x$ and $y$ in $M$. If $\ga$ is contained in the neck $\Nn_j^{k+1}$, then we can connect $x$ and $y$ by a curve in $\bar{T}^{k+1}_{j,i}$ with length less than $2 \length(\ga)$.
\item
For any closed curve $\ga:[0,1]\ra \bar{T}^{k+1}_{j,i}$, where $\ga(0)=\ga(1)=p$, there exists a contraction $H:[0,1]\z [0,1]\ra \bar{T}^{k+1}_{j,i} $ of $\ga$ to $p$, such that  the width of the homotopy $\om_H\leq 21D$ and $p$ is fixed during the contraction.
\end{enumerate}
\end{lemma}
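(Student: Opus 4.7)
My plan is to prove all three parts by pulling back to the Euclidean model via the diffeomorphism $\Phi := \Phi_j^{k+1}\colon A_{\bar r_j^k/2,\,2 r_j^k}(0)\ra\Nn_j^{k+1}$ from~\eqref{equ4.1}. Equip the model annulus $A\su\R^4/\Ga_j^{k+1}$ with polar coordinates $(r,z)$ and the flat cone metric $\de = dr^2 + r^2\,ds^2_{k+1,j}$; by~\eqref{equ5} the pullback $\Phi^{*}g$ of the Riemannian metric satisfies $\|\Phi^{*}g - \de\|_{C^0}\leq\ep<0.1$, so lengths in $M$ and in $\de$ (via $\Phi^{-1}$) agree up to a factor $\sqrt{1\pm\ep}$, and $\Phi$ identifies $K_{j,i}^{k+1}, \bar K_{j,i}^{k+1}$ with $T_{j,i}^{k+1}, \bar T_{j,i}^{k+1}$. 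Write $a := \bar r_j^k/2$, $b := 2 r_j^k$, $r_c := r_c(k+1,j)$, and note we may assume $b \leq D$ (otherwise $B_b(x_j^k) = M$ and the effective outer radius drops to $D$).

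For part~(1), given $\tilde x = (r_1,z_1)$ and $\tilde y = (r_2,z_2)$ in $\bar K_{j,i}^{k+1}$, I build the three-piece path $\tilde\si$ that moves radially from $(r_1,z_1)$ to $(a,z_1)$, then along the shortest geodesic at radius $a$ from $z_1$ to $z_2$ on the scaled sphere, then radially to $(r_2,z_2)$. Since $r_c$ is the convexity radius of $S^3/\Ga_j^{k+1}$ and $z_1,z_2\in B_{r_c}(z_i)$, the sphere arc stays in $B_{r_c}(z_i)$, so $\tilde\si\su\bar K$ and $\Phi(\tilde\si)\su\bar T$. The $\de$-length is at most $2(b-a)+2a r_c \leq 2b + (r_c-1)\bar r_j^k$, and combined with the distortion factor $\sqrt{1+\ep}$ and the bounds $b\leq D$, $r_c\leq\pi/2$, this gives $\length_M(\Phi(\tilde\si))<3D$. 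The same construction works in $T_{j,i}^{k+1}$ using the inner radius of $T$ in place of $a$ and the tighter angular bound $r_c/2$ in place of $2 r_c$. For part~(2), pull $\ga$ back to $\tilde\ga\su A$; since $\ga$ is $M$-minimizing in $\Nn_j^{k+1}$, $\length_\de(\tilde\ga)\geq d_\de(\tilde x,\tilde y)=\sqrt{r_1^2+r_2^2-2r_1r_2\cos\ta}$, where $\ta := d_{S^3/\Ga}(z_1,z_2)\in[0,\pi]$. I repeat~(1) with the optimal auxiliary radius $r_\star := \min(r_1,r_2)$; the resulting path $\tilde\tau\su\bar K$ has $\de$-length $|r_1-r_2|+r_\star\ta$, and the key estimate is the chord-versus-detour inequality
$$|r_1-r_2|+r_\star\ta \;\leq\; \frac{\pi}{2}\sqrt{(r_1-r_2)^2+4r_1r_2\sin^2(\ta/2)},$$
valid on $[0,\pi]$ and following from $r_\star^2\leq r_1 r_2$ together with $\sin(\ta/2)\geq\ta/\pi$ by a short quadratic-in-$\Delta := r_2-r_1$ calculation. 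Transferring $\tilde\tau$ through $\Phi$ then yields $\length_M(\Phi(\tilde\tau))\leq\tfrac{\pi}{2}\sqrt{(1+\ep)/(1-\ep)}\,\length(\ga)<2\length(\ga)$.

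For part~(3), writing $p=\Phi(r_p,z_p)$ I contract $\ga$ in two stages inside $\bar K$: for $t\in[0,1/2]$ the radial slide $\tilde H(s,t)=((1-2t)r_{\tilde\ga(s)}+2t r_p,\,z_{\tilde\ga(s)})$; for $t\in[1/2,1]$ the sphere-geodesic contraction at radius $r_p$ from $z_{\tilde\ga(s)}$ to $z_p$, which stays in $B_{r_c}(z_i)$ by convexity. Both stages are constant at the parameter $s_0$ where $\tilde\ga(s_0)=(r_p,z_p)$, so $p$ is fixed, and the homotopy lies entirely in $\bar K$. A fixed trajectory has $\de$-length $|r_{\tilde\ga(s)}-r_p|+r_p\x d_{S^3/\Ga}(z_{\tilde\ga(s)},z_p)\leq(b-a)+2r_c b$; together with $b\leq D$, $r_c\leq\pi/2$, and the distortion factor, $\om_H\leq\sqrt{1+\ep}(1+\pi)D<21D$. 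The main obstacle is the sharp comparison in part~(2): the radial-sphere-radial detour is generically strictly longer than the Euclidean chord, and the detour factor $\pi/2$ above must fit inside the tolerance $2\sqrt{(1-\ep)/(1+\ep)}\approx 1.81$ left by the near-isometry---it does, but with little slack, and the convexity-radius bound on the angular extent of the sphere arc is used essentially; the remaining estimates reduce to direct length accounting in the flat cone metric.
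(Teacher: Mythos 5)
Your overall framework is the same as the paper's---pull back through $\Phi=\Phi_j^{k+1}$ to the flat cone model, do the geometry there, and convert lengths via the $\ep$-closeness of $\Phi^*g$ to $\de$---but parts~(2) and~(3) take genuinely different routes.

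For part~(2), the paper simply invokes that $\bar K^{k+1}_{j,i}$ is a \emph{convex} open subset of $A_{\bar r_j^k/2,2r_j^k}(0)$ (a claim it states after the definition without proof), so the $A$-geodesic between $\Phi^{-1}(x)$ and $\Phi^{-1}(y)$ stays in $\bar K$ and has $\de$-length at most $\length_\de(\Phi^{-1}\ga)$; pushing forward gives the sharper factor $\sqrt{(1+2\ep)/(1-\ep)}<2$. You instead build the explicit radial--sphere--radial detour $\tilde\tau$ and prove a chord-versus-detour inequality with factor $\pi/2$, then absorb the $\ep$-distortion. This avoids the convexity assertion (which does have a small subtlety: flat-cone chords between points of $\bar K$ can dip below radius $\bar r_j^k/2$, so the relevant geodesic is the \emph{annulus}-geodesic, not the cone chord), at the cost of a more delicate trigonometric estimate and a worse constant ($\pi/2\cdot\sqrt{(1+\ep)/(1-\ep)}\approx 1.74$, still $<2$). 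For part~(3), the paper first contracts $\ga$ to the model vertex $q=\Phi(2r_j^k,z_i)$ via a retraction of $\bar K$ (width $\leq 3D$, base point not fixed), and then conjugates by a path from $p$ to $q$ to fix $p$, incurring the $12D+3D+6D=21D$ total. You instead build a $p$-fixing contraction directly (slide to radius $r_p$, then contract angularly to $z_p$), which fixes $p$ by construction and gives a much better bound $\approx\sqrt{1+\ep}(1+\pi)D<5D\ll 21D$. This is a cleaner construction than the paper's conjugation trick.

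One caveat: your one-line justification of the key inequality
$|r_1-r_2|+r_\star\ta \leq \tfrac{\pi}{2}\sqrt{(r_1-r_2)^2+4r_1r_2\sin^2(\ta/2)}$
via ``$r_\star^2\leq r_1r_2$ together with $\sin(\ta/2)\geq\ta/\pi$, quadratic-in-$\Delta$'' does not actually go through as stated. Substituting $\sin^2(\ta/2)\geq(\ta/\pi)^2$ into the squared difference
$(\tfrac{\pi^2}{4}-1)\Delta^2 + r_\star\Delta\big[\pi^2\sin^2(\ta/2)-2\ta\big] + r_\star^2\big[\pi^2\sin^2(\ta/2)-\ta^2\big]$
annihilates the $r_\star^2$-term entirely and leaves $(\tfrac{\pi^2}{4}-1)\Delta^2+r_\star\Delta\,\ta(\ta-2)$, which is negative for small $\Delta$ and $\ta<2$. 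The inequality is nonetheless true: one needs the full discriminant condition, which after simplification using $4\sin^2(\ta/2)\cos^2(\ta/2)=\sin^2\ta$ and $2\sin^2(\ta/2)=1-\cos\ta$ reduces to
$\tfrac{\pi^2-4}{4}\sin^2\ta \geq (\ta+\cos\ta-1)^2$
on the range where the linear coefficient is negative (roughly $\ta\in(0,0.87)$), and this does hold. So the approach is correct, but this step deserves an honest proof rather than the appeal to $\sin(\ta/2)\geq\ta/\pi$; the paper's convexity argument sidesteps the whole issue.
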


\begin{proof}
\

\begin{enumerate}
\item Note that any two points in $K^{k+1}_{j,i}$ can be connected by a curve with length less than $5 r_j^k$. By equation~\eqref{equ5} and the fact that $2r_j^k \leq D$, we conclude that any two points in $T^{k+1}_{j,i}$ can be connected by a curve with length less than $\frac{5r_j^k}{\sqrt{1-\epsilon(v)}} \leq \frac{5r_j^k}{\sqrt{1-0.1}} \leq 3D$. If two points are in $\bar{T}^{k+1}_{j,i}$, the proof is similar.

\item Now suppose that a minimizing geodesic $\ga$ in $M$ connecting $x$ and $y$ is contained in the neck $\Nn_j^{k+1}$. Then $(\Phi_j^{k+1})^{-1}(\ga)$ is a curve connecting $(\Phi_j^{k+1})^{-1}(x)$ and $(\Phi_j^{k+1})^{-1}(y)$ in $A_{\bar{r}_j^k/2,2r_j^k}(0)$ with length less than $\sqrt{1+2 \ep(v)} \length(\ga)$.  Since $\bar{K}^{k+1}_{j,i}$ is convex in $A_{\bar{r}_j^k/2,2r_j^k}(0)$, there is a curve $\ga'$ in $\bar{K}^{k+1}_{j,i}$ connecting  $(\Phi_j^{k+1})^{-1}(x)$ and $(\Phi_j^{k+1})^{-1}(y)$ with length less than the length of $(\Phi_j^{k+1})^{-1}(\ga)$. Therefore, $\Phi_j^{k+1}(\ga')$ is a desired curve with length
\begin{equation*}
\begin{split}
\length(\Phi_j^{k+1}(\ga')) &\leq \frac{1}{\sqrt{1-\ep(v)}} \length([\Phi_j^{k+1}]^{-1}(\ga)) \\
 &\leq \frac{\sqrt{1+2 \ep(v)}}{\sqrt{1-\ep(v)}}\length(\ga) < 2\length(\ga).
\end{split}
\end{equation*}

\item For $(\bar{K}^{k+1}_{j,i}, dg^2_{k+1,j})$, let us consider a homotopy $F(t,x): [0,1] \times \bar{K}^{k+1}_{j,i} \ra \bar{K}^{k+1}_{j,i}$ with $F(0,x)=x$ and $F(1,x)=(2r_j^k, z_i)$ defined in the following way.

For $t \in [0, 1/2]$, we define $F(t,x)$ to be the deformation retraction of $\bar{K}^{k+1}_{j,i}$ onto $2r_j^k \times B_{r_{k+1,j}}(z_i)$. And for $t \in [1/2, 1]$, we define $F(t,x)$ to be a retraction of
$2r_j^k \times B_{r_{k+1,j}}(z_i)$ to $(2r_j^k, z_i)$ induced by the exponential map at $z_i \in S^3 / \Ga_j^{k+1}$ with the metric $(2r_j^k)^2 ds^2_{k+1,j}$.  Hence, for any closed curve in $ \bar{K}^{k+1}_{j,i}$, $F$ induces a contraction with width less than $4r_j^k$.  Consider $F_1=\Phi_j^{k+1} \circ F \circ (id \times [\Phi_j^{k+1}]^{-1}): [0,1] \times \bar{T}^{k+1}_{j,i} \ra \bar{T}^{k+1}_{j,i}$. For any closed curve $\ga$ in $\bar{T}^{k+1}_{j,i}$, the homotopy $F_1$ induces a contraction $H$ of $\ga$ to $q=\Phi_j^{k+1}(2r_j^k, z_i)$ such that
$$\om_H\leq \frac{4r_j^k}{\sqrt{1-\epsilon(v)}} \leq \frac{4r_j^k}{\sqrt{1-0.1}} \leq 3D.$$

Now suppose that $\ga$ is a curve in $\bar{T}^{k+1}_{j,i}$ and $p$ is point on $\ga$. In Step 1, the point $p$ is not fixed during the contraction.   We will describe a new homotopy by describing the image of the curve $\ga$ under the homotopy such that $p$ is fixed during the homotopy. Let $\si \su \bar{T}^{k+1}_{j,i}$ be the curve from $p$ to $q$ as described in (1).  $\length(\si) \leq 3D$.  Then $\ga$ is homotpic to $\si \cup [(-\si) \cup \ga \cup \si] \cup (-\si) $ with width bounded by $12D$. By Step 1, $\si \cup [(-\si) \cup \ga \cup \si] \cup (-\si) $ is homotopic to $\si \cup (-\si)$ with width bounded by $3D$. $\si \cup (-\si)$ is homotopic to $p$ with width bounded by $6D$. Hence the width of the contracton is $12D+3D+6D=21D$.
\end{enumerate}
\end{proof}

In the next lemma, we show that one can cover the manifold $M$ by contractible open balls and open trapezoids defined in equation (\ref{equ6}), such that the total number of the open sets in this covering is bounded by a function of $v$ and $D$.

\begin{lemma}\label{lem2_c}
Let $M$ be a 4-dimensional manifold that satisfies $|Ric|\leq 3$, $\vol(M)>v>0$ and $\diam(M)\leq D$ with the ``bubble tree'' decomposition
$$M=\Bb^1 \cup \bigcup_{j_2=1}^{N_2} \Nn_{j_2}^2\cup \bigcup_{j_2=1}^{N_2} \Bb_{j_2}^2\cup\dots\cup \bigcup_{j_k=1}^{N_k} \Nn_{j_k}^k\cup \bigcup_{j_k=1}^{N_k} \Bb_{j_k}^k,$$
as in Theorem~\ref{thm3_f}. Then $M$ admits a covering $\mathcal{O}$ that consists of contractible metric balls $\{B_{r(x_j)}(x_j)\}$ and trapezoids $\{T^k_{j,i}\}$ such that
\begin{enumerate}
  \item Each body region $\Bb_i^k$ is covered by some metric balls $\{B_{r(x_j)}(x_j)\}$, where $x_j\in \Bb_i^k$, $r(x_j)=\frac{1}{32\sqrt{1+2\cdot10^{-3}}}\x r_{h}(x_j)$ and $r_{h}(x_j)$ is the harmonic radius at $x_j$.
  \item Each neck region $A_{2\bar{r}_j^k, r_j^k} (x_j^k) \su \Nn_i^{k+1}$ is covered by some trapezoids $\{T^{k+1}_{j,i}\}$ which is defined in the equation~\eqref{equ6}.
  \item The total number of the open sets in $\mathcal{O}$ is bounded by some function $\ti{N}(v,D)$.
\end{enumerate}
\end{lemma}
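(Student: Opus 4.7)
The plan is to handle body regions and neck regions separately using the bubble-tree structure of Theorem~\ref{thm3_f}, bound the number of cover elements within each piece via volume comparison, and then sum over the finitely many pieces.

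For each body region $\Bb_i^k$, I would invoke condition (1) of Theorem~\ref{thm3_f}, which guarantees $r_h(x)\geq r_0(v,D)\cdot\diam(\Bb_i^k)$ for every $x\in\Bb_i^k$. By Lemma~\ref{lem1_c} the ball $B_{r(x)}(x)$ with $r(x)=r_h(x)/(32\sqrt{1+2\cdot 10^{-3}})$ is contractible, and $r(x)$ is bounded below by a universal multiple of $\diam(\Bb_i^k)$. I would then pick a maximal family $\{x_j\}\su\Bb_i^k$ such that the balls $B_{r(x_j)/2}(x_j)$ are pairwise disjoint, so that $\{B_{r(x_j)}(x_j)\}$ covers $\Bb_i^k$. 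Because the harmonic coordinate at $x_j$ is bi-Lipschitz to the Euclidean ball of radius $r_h(x_j)/2$, the volume $\vol(B_{r(x_j)/2}(x_j))$ is at least a universal multiple of $(r_0\cdot\diam(\Bb_i^k))^4$, while $\vol(\Bb_i^k)\leq\vol(B_{\diam(\Bb_i^k)}(x_j))$ is bounded above by Bishop--Gromov under $\text{Ric}\geq -3$. The disjointness ratio then bounds the number of such balls per body by a function of $v$ and $D$.

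For each neck I would first observe that a single trapezoid $T^{k+1}_{j,i}$ already spans the full radial range of the annulus $A_{2\bar{r}_j^k,r_j^k}(x_j^k)$. Indeed, by \eqref{equ5.1} the radial interval of $K^{k+1}_{j,i}$ is $(2\sqrt{1-\ep}\bar{r}_j^k,(2-\sqrt{1-\ep})r_j^k)$, and since $\ep<0.1$ we have $2\sqrt{1-\ep}<2$ and $2-\sqrt{1-\ep}>1$, so this interval contains $[2\bar{r}_j^k,r_j^k]$. Using the bi-Lipschitz bounds \eqref{equ5} on $\Phi_j^{k+1}$, it therefore suffices to pick base points $z_i\in S^3/\Ga_j^{k+1}$ so that $\{B_{r_c(k+1,j)/4}(z_i)\}$ covers $S^3/\Ga_j^{k+1}$. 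Because $\Ga_j^{k+1}\su O(4)$ acts freely and isometrically on the round $S^3$, the convexity radius $r_c(k+1,j)$ is bounded below in terms of $|\Ga_j^{k+1}|$, and by \eqref{equ4.5} we have $|\Ga_j^{k+1}|\leq C(v,D)$. A volume count on $S^3/\Ga_j^{k+1}$ then bounds the number of trapezoids per neck by a function of $v$ and $D$ alone.

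To finish, I would use Theorem~\ref{thm3_f}(5), which gives $N_k\leq N(v,D)$ bodies and necks at each level and total depth $k\leq k(v,D)$. Summing the per-piece bounds from the previous two paragraphs yields the total count $\tilde{N}(v,D)$. The main technical obstacle is the body-region estimate: one has to combine the Ricci lower bound (for Bishop--Gromov from above) with the harmonic-radius lower bound (for a matching volume lower bound on the small balls) to obtain a counting ratio uniform in $v$ and $D$. The neck step, by contrast, reduces to a packing estimate on a constant-curvature space form whose total order is already controlled by $v$ and $D$.
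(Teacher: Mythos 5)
Your proposal takes essentially the same approach as the paper: a maximal-packing argument plus Bishop--Gromov volume comparison for the bodies, a packing count on $S^3/\Ga_j^{k+1}$ using the order bound $|\Ga_j^{k+1}|\leq C(v,D)$ for the necks, and Theorem~\ref{thm3_f}(5) to sum over the finitely many pieces. (Your body-region argument gets the small-ball volume lower bound from the harmonic-coordinate bi-Lipschitz estimate, whereas the paper obtains it directly from Bishop--Gromov monotonicity; both work.) One step deserves more care than your ``using the bi-Lipschitz bounds'' phrase suggests: the inclusion $A_{2\bar{r}_j^k,r_j^k}(x_j^k)\subset\bigcup_i T^{k+1}_{j,i}$ does not follow merely from the observation that the model radial interval $(2\sqrt{1-\ep}\bar{r}_j^k,(2-\sqrt{1-\ep})r_j^k)$ contains $[2\bar{r}_j^k,r_j^k]$, because neither $x_j^k$ nor the cone point $0$ lies in the domain or range of $\Phi_j^{k+1}$, so one cannot directly compare the radial coordinate of $(\Phi_j^{k+1})^{-1}(y)$ with $d_M(y,x_j^k)$. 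The paper handles this in inequalities \eqref{inter radius} and \eqref{outer radius} by estimating the distance between $(\Phi_j^{k+1})^{-1}$-images of boundary spheres and the inner/outer spheres of the model annulus; the slack factor $\sqrt{1-\ep}$ in the definition of $K^{k+1}_{j,i}$ is there precisely to absorb that distortion, as you correctly intuit but do not verify.
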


\begin{proof}
Let us first consider the body regions in the decomposition of $M$. For each body $\Bb_j^k$, let $p\in\Bb_j^k$ and $d_j=\diam(\Bb_j^k)$.
let $U_j=\{x_i\}\su \Bb_j^k$ be a maximal subset such that $\{B_{R(x_i)/40}(x_i)\}$ are disjoint, $\Bb_j^k\su \cup_{i}B_{R(x_i)/4}(x)$ and $\cup_i B_{R(x_i)/40}\su B_{2d_j}(p)$, where $R(x_i)$ is the function defined in Lemma~\ref{lem1_c}.

We claim that the number of the elements in $U_j$ is bounded by a function $N_j(r_0)$, where $r_0=r_0(v,D)$ is the constant in Theorem~\ref{thm3_f}. Indeed, let $\vol_{-1}B_r$ denote the volume of a metric ball of radius $r$ in a 4-dimensional hyperbolic space of constant sectional curvature $\kappa=-1$. Let $\ep=R(x_i)/4$. Then, by Bishop-Gromov volume comparison theorem (See, for example, \cite{petersen2006riemannian}),
$$\frac{\vol(B_{10d_j}(x_i))}{\vol_{-1}B_{10d_j}}\leq\frac{\vol(B_{\ep/10}(x_i))}{\vol_{-1}B_{\ep/10}}.$$
And hence,
$$\#|U_j|\leq \frac{\sum\vol(B_{10d_j}(x_i))}{\vol(B_{2d_j}(p))}\leq \frac{\vol_{-1}B_{10d_j}}{\vol_{-1}B_{\ep/10}} \leq N_1(d_j/r_h).$$
By Theorem~\ref{thm3_f}, for any $x_i\in \Bb_j^k$, the ratio $d_j/r_h(x_i)<r_0(v,D)$. Therefore, we conclude that the number of the elements $\#|U_j|\leq N_1(r_0(v,D))$. Because the number of the $\Bb_j^k$ in the decomposition is bounded by $N(v,D) \times k(v,D)$, taking
 $$r(x_j)=R(x_j)/4=\frac{1}{32\sqrt{1+2\cdot10^{-3}}}\cdot r_h(x_j),$$
then the total number of the balls in the covering $\{B_{r(x_j)}(x_j)\}$ constructed above is bounded by $N(v,D) \times k(v,D) \times N_1(r_0(v,D))$.

Next we will show that for each annulus in the neck region, we have
\begin{equation}\label{intersection properties}
A_{2\bar{r}_j^{k}, r_j^{k}} (x_j^{k}) \su \cup_i T^{k+1}_{j,i} \su A_{(2-3\ep)\bar{r}_j^{k},(1+\frac{7}{2}\ep) r_j^{k}} (x_j^{k}).
\end{equation}
We first prove $A_{2\bar{r}_j^{k}, r_j^{k}} (x_j^{k}) \su \cup_i T^{k+1}_{j,i}$. Since $\cup_i K^{k+1}_{j,i}$ covers the annulus $A_{2\sqrt{1-\ep}\bar{r}_j^k,(2-\sqrt{1-\ep})r_j^k}(0)$, where $ 0\in \R^4/\Ga_j^{k+1}$, it suffices to prove that
$$(\Phi_j^{k+1})^{-1}(A_{2\bar{r}_j^{k}, r_j^{k}} (x_j^{k})) \su A_{2\sqrt{1-\ep}\bar{r}_j^k,(2-\sqrt{1-\ep})r_j^k}(0).$$
Let $S_r(0)$ be the sphere of radius $r$ in $\R^4$ and $S_r(x)$ be the sphere of radius $r$ at $x \in M$. We show that
\begin{align}
 & \dist_{\R^4/\Ga_j^{k+1}} \big((\Phi_j^{k+1})^{-1}(S_{2\bar{r}_j^k}(x_j^k)),S_{\bar{r}_j^k/2}(0)/ \Ga_j^{k+1}\big)  \label{inter radius} \\
\geq & 2\sqrt{1-\ep}\bar{r}_j^k -\bar{r}_j^k/2 = \dist_{\R^4/\Ga_j^{k+1}}\big(S_{2\sqrt{1-\ep}\bar{r}_j^k}(0)/ \Ga_j^{k+1},S_{\bar{r}_j^k/2}(0)/ \Ga_j^{k+1}\big).\nonumber
\end{align}
If $\ga$ is a curve that realizes the  $\dist_{\R^4/\Ga_j^{k+1}}\big([\Phi_j^{k+1}]^{-1}(S_{2\bar{r}_j^k}(x_j^k)),S_{\bar{r}_j^k/2}(0)/ \Ga_j^{k+1}\big) $ in $ \R^4/\Ga_j^{k+1}$. Then $\Phi_j^{k+1}(\ga) \su \Nn_j^k \cap \overline{B_{2\bar{r}_j^k}(x_j^k)}$. By equation (\ref{equ4}), we have
$$\length(\ga) \geq \sqrt{1-\ep} \cdot\length(\Phi_j^{k+1}(\ga)) \geq \sqrt{1-\ep} (2\bar{r}_j^k-\bar{r}_j^k/2) \geq 2\sqrt{1-\ep}\bar{r}_j^k -\bar{r}_j^k/2$$ Similarly, we have
\begin{align}\label{outer radius}
& \dist_{\R^4/\Ga_j^{k+1}}\big([\Phi_j^{k+1}]^{-1}(S_{r_j^k}(x_j^k)),S_{2 r_j^k}(0)/ \Ga_j^{k+1}\big)  \\
& \geq \dist_{\R^4/\Ga_j^{k+1}}\big([\Phi_j^{k+1}]^{-1}(S_{(2-\sqrt{1-\ep})r_j^k}(0)), S_{2 r_j^k}(0)/ \Ga_j^{k+1}\big). \nonumber
\end{align}
And the claim follows from the inequalities \eqref{inter radius} and \eqref{outer radius}. The second inclusion $\cup_i T^{k+1}_{j,i} \su A_{(2-3\ep)\bar{r}_j^{k},(1+\frac{7}{2}\ep) r_j^{k}} (x_j^{k}) $ can be proved similarly.

Now let us consider the neck regions. Consider the collection of all $\{S^3/\Ga_j^{k+1}\}$ with the standard metrics that appear in the bubble tree decomposition Theorem~\ref{thm3_f}.
The order of the group has a uniform upper bound $|\Ga_j^{k+1}| \leq C(v,D)$. Hence, there is a uniform volume lower bound $\vol(S^3)/C(v,D)$, and two-sided sectional curvature bound $1$ for all manifolds in the collection $\{S^3/\Ga_j^{k+1}\}$. Therefore, the convexity radius is bounded below in terms of $C(v,D)$ for all $\{S^3/\Ga_j^{k+1}\}$.

Now if $r_c(k+1,j)$ is the convexity radius of $S^3 / \Ga_j^{k+1}$, we cover $S^3 / \Ga_j^{k+1}$ by $B_{r_c(k+1,j)/4}(z_i)$ with $z_i \in S^3 / \Ga_j^{k+1}$ in an efficient way so that $B_{r_c(k+1,j)/16}(z_i)$ are pairwise disjoint. Then with the same volume comparison argument as above shows that the number balls in the covering of any $S^3 / \Ga_j^{k+1}$ is uniformly bounded above by $N_2(C(v,D))$. By the definition of the neck in equations (\ref{equ5.1}) and (\ref{equ6}), the number of trapezoids in each neck $\Nn_j^k$ is equal to the number of balls $B_{r_{k+1,j}/4}(z_i)$ to cover $S^3 / \Ga_j^{k+1}$. Hence, there are at most $N_2(C(v,D))$ trapezoids $T^{k+1}_{j,i}$ in the covering of each $A_{2\bar{r}_j^{k}, r_j^{k}} (x_j^{k})$. Then the total number of trapezoids in the necks is bounded by $N(v,D) \times k(v,D) \times N_2(C(v,D))$.  Therefore, there are at most $\ti{N}(v,D)=N(v,D) \times k(v,D) \times (N_1(r_0(v,D))+N_2(C(v,D)))$ open sets in the covering $\mathcal{O}$ of $M$.
\end{proof}

\section{Homotopy distance and simplicial approximation}\label{sec3}
In this section, we are going to first introduce a graph $\Si$ on the manifold $M$. We will show that given a curve $\ga\su M$, one can find its "simplicial approximation" in the graph $\Si$ with controlled "simplicial length". (See Definition~\ref{def4_s}.) The idea of this simplicial approximation is crucial in the proof of Theorem~\ref{thm2_w}.

Note that the proof of the existence part (Theorem~\ref{thm2_w}A) and an explicit formula in terms of certain constants (Theorem~\ref{thm2_w}B) will be based on different techniques.

The estimations in Lemma~\ref{lm1} to Lemma~\ref{lm5} will be mainly used in the proof of Theorem~\ref{thm2_w}A while Lemma~\ref{lm6} to Lemma~\ref{lm8} will be used in the proof of Theorem~\ref{thm2_w}B. Several techniques we used in Lemma~\ref{lm1} to Lemma~\ref{lm5} are due to R.~Rotman and her work \cite{rotman2000upper}.

Through out the section, we assume that $M\in \Mm(4,v,D)$. The graph $\Si$ is constructed from the covering $\mathcal{O}$ in Lemma~\ref{lem2_c} in the following three steps.

\underline{\textbf{Construction of the graph $\Si$}}
\begin{enumerate}
\item[1.]
By Lemma~\ref{lem2_c}, each body in $M$ is covered by some harmonic balls $\{B_{r(x_j)}(x_j)\}$, where $r(x)=R(x)/4=C\cdot r_h(x)$, for some constant $C$ and $r_h(x)$ is the harmonic radius at $x$. We define the center $x_j$ of the ball to be a vertex in the graph $\Si$.

If for some $i,j$, the intersection $B_{r(x_i)}(x_i)\cap B_{r(x_j)}(x_j)\neq \emptyset$ and $r(x_i)\geq r(x_j)$, then the union $B_{r(x_i)}(x_i)\cup B_{r(x_j)}(x_j)\su B_{R(x_i)}(x_i)$. In this case, we connect $x_i$ and $x_j$ with a minimizing geodesic segment $\ga$ in $M$. The triangle inequality implies that $\ga\su B_{R(x_i)}(x_i)$. We define $\ga$ to be an edge in $\Si$ connecting the vertices $x_i$ and $x_j$.
\item[2.]
Next, we conider the trapezoids $T^k_{j,i}$ in $\mathcal{O}$. For each trapezoid $T^k_{j,i}$, we choose a (any) point $x_{k,j,i} \in T^k_{j,i}$ to be a vertex in $\Si$. Since different necks are disjoint, the trapezoids in different necks do not intersect. Therefore, we only consider the intersection between trapezoids in the same neck. If $T^k_{j,i} \cap T^k_{j,l} \neq \emptyset$, then $T^k_{j,i} \cup T^k_{j,l} \subset \bar{T}^k_{j,i}\su \Nn_j^{k}$.  Let $y$ be a point in $ T^k_{j,i} \cap T^k_{j,l}$. By Lemma~\ref{trapezoid contraction}, we connect $x_{k,j,i}$ and $y$ by a curve $\ga_1 \su T^k_{j,i}$  and we connect $y$ and $x_{k,j,l}$ by a curve $\ga_2 \su T^k_{j,l}$. Let $\ga=\ga_1 \cup \ga_2 \su T^k_{j,i} \cup T^k_{j,l}$.  Then $\length(\ga) \leq 6D$ and we define this curve to be an edge in $\Si$ connecting the vertices $x_{k,j,i}$ and $x_{k,j,l}$.
\item[3.]
If $T^k_{j,i} \cap B_{r(x_l)}(x_l) \neq \emptyset$, let $y$ be a point in $ T^k_{j,i} \cap B_{r(x_l)}(x_l)$. Let $\ga_1$ be a minimizing geodesic connecting $x_l$ and $y$ in $M$. Note that because both $y$ and $x_{k,j,i}$ are in $T^k_{j,i}$, we can connect $y$ and $x_{k,j,i}$ by a curve $\ga_2$ as in Lemma~\ref{trapezoid contraction}. Let $\ga=\ga_1\cup \ga_2 \su T^k_{j,i} \cup B_{r(x_l)}(x_l)$ and $\length(\ga) \leq 4D$. We define $\ga$ to be an edge in $\Si$ connecting $x_l$ and $x_{k,j,i}$.

\end{enumerate}
\begin{figure}[htbp]
\begin{minipage}[c]{0.47\textwidth}
\centering\includegraphics[width=7cm]{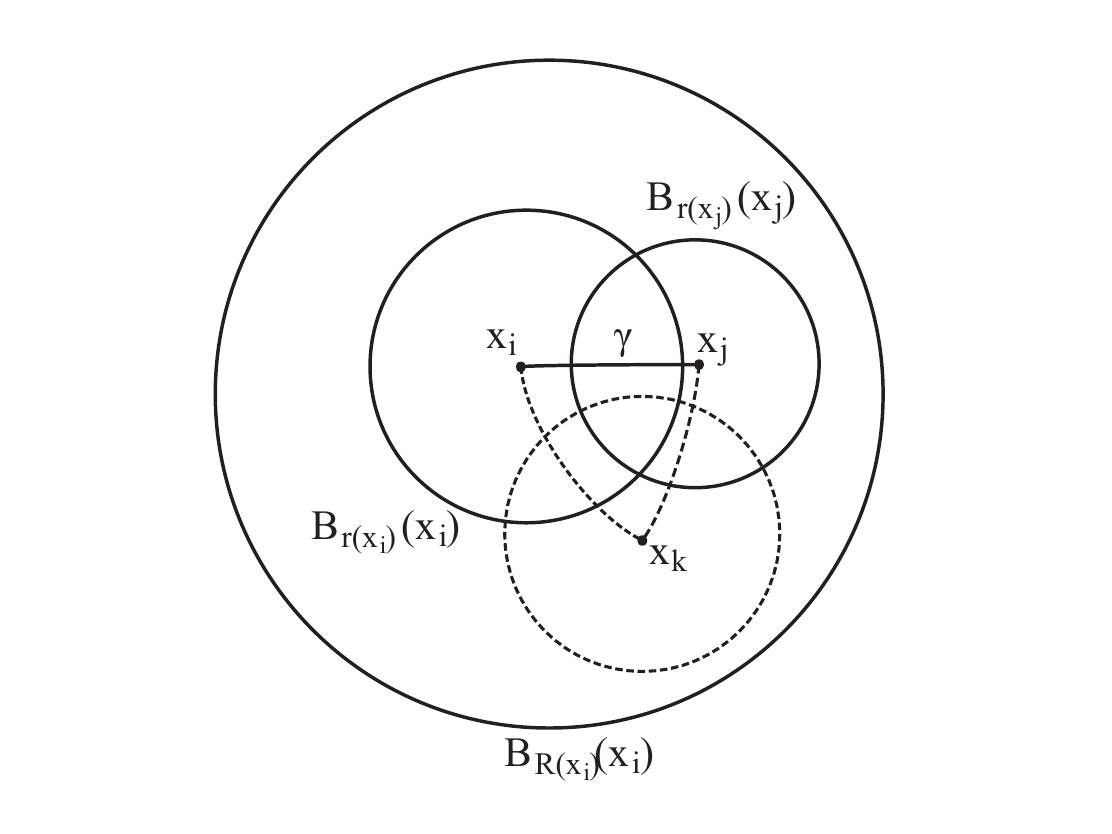}
\vspace*{6pt}
\caption{Construction of the graph $\Si$ I.}\label{fig1}
\end{minipage}
\begin{minipage}[c]{0.47\textwidth}
\centering\includegraphics[width=7cm]{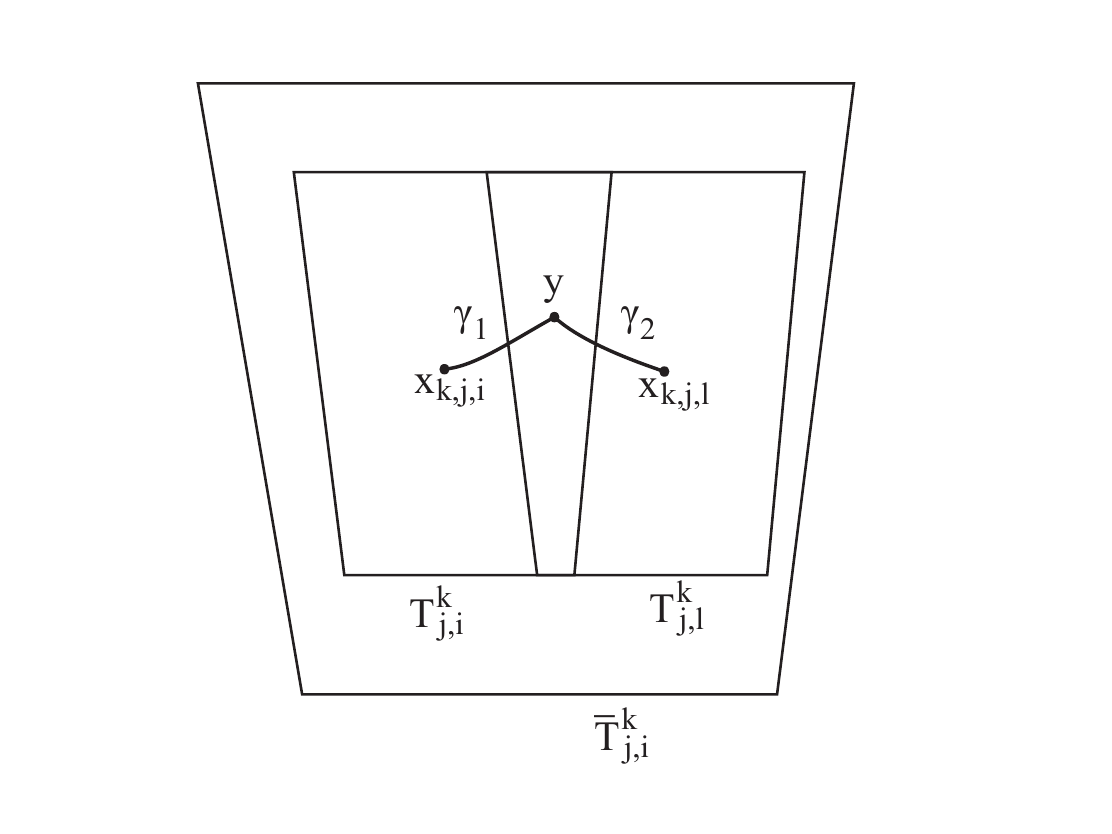}
\vspace*{6pt}
\caption{Construction of the graph $\Si$ II.}\label{fig2}
\end{minipage}
\end{figure}

\begin{figure}[htbp]
\centering\includegraphics[width=7cm]{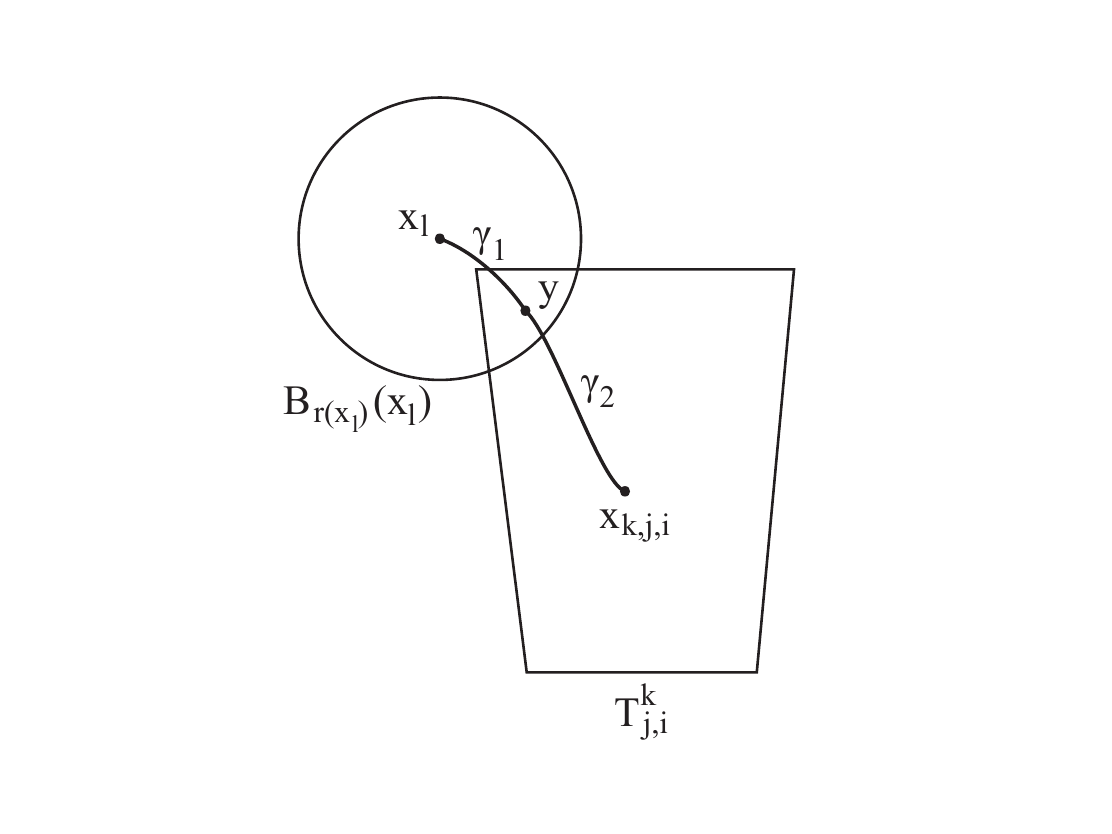}
\vspace*{6pt}
\caption{Construction of the graph $\Si$ III.}\label{fig2}
\end{figure}

We also call the points $x$, which we pick to be a vertex in $\Si$, the \textsl{center} of the open sets in $\mathcal{O}$. Note that the number of the edges in $\Si$ is bounded by $\ti{N}^2$, where $\ti{N}=\ti{N}(v,D)$ is the constant in Lemma~\ref{lem2_c}. In order to control the length of the curves in $\Si$, let us introduce the following definition.

\begin{definition}\label{def4_s}
 A simplicial curve $\al$ in $\Si$ is a simplicial map $\al:[0,1]_{\triangle}\ra \Si$, where $[0,1]_{\triangle}$ is a simplicial complex obtained by taking a partition $0<t_1<\dots<t_L<1$ of the interval $[0,1]$ and $L\geq0$ is an integer. We define the simplicial length $m(\al)$ of $\al$ to be the number of edges in $\al$. In other words, $m(\al)=L+1$. We call $\al$ a loop in $\Si$, if $\al(0)=\al(1)$.
\end{definition}

Lemma~\ref{lm1} below indicates that for any closed curve $\ga$ in the manifold $M$, one can find a curve $\ti{\ga}$ in $\Si$ which is homotopic to $\ga$ though a homotopy with bounded width. The curve $\ti{\ga}$ will be called the {\sl simplicial approximation} of the curve $\ga$.

\begin{remark}
  By the construction of the graph $\Si$, there is a natural inclusion map $\Si\hookrightarrow M$. Suppose that $\ti{\ga}$ is the simplicial approximation of a curve $\ga$. Sometimes we refer $\ti{\ga}$ as a piecewise smooth curve in $M$, which is the image of a simplicial curve under the inclusion map.
\end{remark}

\begin{lemma}\label{lm1}
For any curve $\ga:[0,1]\ra M$, there exists a simplicial curve $\ti{\ga}:[0,1]\ra \Si$ such that $\ga$ is homotopic to $\ti{\ga}$ through a homotopy $H$ with width $\om_H\leq 60 D$, where $D$ is the diameter of $M$.
\end{lemma}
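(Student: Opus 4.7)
The plan is to subdivide $\ga$ into arcs each contained in one set of the covering $\mathcal{O}$ from Lemma~\ref{lem2_c}, replace each arc by the corresponding edge of $\Si$, and combine the local contractions provided by Lemma~\ref{lem1_c} and Lemma~\ref{trapezoid contraction} into a single homotopy of controlled width.

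First, since $\mathcal{O}$ is a finite open cover of the compact image $\ga([0,1])$, the Lebesgue number lemma yields a partition $0=t_0<t_1<\cdots<t_N=1$ together with sets $U_i\in\mathcal{O}$ such that $\ga([t_i,t_{i+1}])\subset U_i$. Let $x_i$ be the center of $U_i$ in the sense of the construction of $\Si$. Since $\ga(t_{i+1})\in U_i\cap U_{i+1}$, the rules in Steps 1--3 of the construction of $\Si$ produce an edge $e_i$ joining $x_i$ and $x_{i+1}$. Define $\ti\ga$ to be the simplicial curve that on each $[t_i,t_{i+1}]$ traverses $e_i$, so that $\ti\ga(t_i)=x_i$.

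Next, for each $i$ I fix a contractible enlargement $\bar U_i\supset U_i$: the ball $B_{R(x_i)}(x_i)$ of Lemma~\ref{lem1_c} when $U_i$ is a harmonic ball (every loop contracts with width $\leq D$), and the large trapezoid $\bar T^{k}_{j,\ell}$ when $U_i$ is a trapezoid (every loop contracts to any prescribed base point with width $\leq 21D$ by Lemma~\ref{trapezoid contraction}(3)). By inspection of the three cases in the construction of edges, $e_i\subset\bar U_i\cup\bar U_{i+1}$. I then build $H$ in two coordinated phases. \textbf{Phase 1:} simultaneously, for each interior node $t_i$, slide $\ga(t_i)$ along a path in $\bar U_i$ to the center $x_i$, and interpolate this sliding continuously across $[t_{i-1},t_i]\cup[t_i,t_{i+1}]$ so that at the end of the phase each arc on $[t_i,t_{i+1}]$ is a curve in $\bar U_i\cup\bar U_{i+1}$ running from $x_i$ to $x_{i+1}$; each trajectory has length at most the contraction width of the relevant $\bar U$, hence at most $21D$. \textbf{Phase 2:} on each $[t_i,t_{i+1}]$ separately, apply the contraction of Lemma~\ref{trapezoid contraction}(3) (resp.\ Lemma~\ref{lem1_c}) with fixed base point $x_i$ to the loop formed by the deformed arc concatenated with $e_i$ reversed; the ``opened up'' deformation pulls the arc onto $e_i$, and each trajectory has length at most $21D$ plus a partial traversal of $e_i$ (length $\leq 6D$). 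Summing the two phases shows that every trajectory of $H$ is a concatenation of at most two contraction paths in the $\bar U$'s and one edge segment, giving $\om_H\leq 60D$.

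The main obstacle is that adjacent $U_i$'s may be trapezoids belonging to different necks, joined only through an intervening harmonic ball, so the union $\bar U_i\cup\bar U_{i+1}$ is not itself contractible and no single invocation of Lemma~\ref{trapezoid contraction} can carry one arc directly onto its target edge. The two-phase construction circumvents this by first moving only the subdivision nodes (each inside a single $\bar U_i$) and then contracting each arc separately inside its own $\bar U_i$; this ensures every trajectory is a bounded concatenation of at most two local contraction paths rather than an uncontrolled nested composition whose width would compound. The remaining work is the routine verification that the two phases can be glued into an honest continuous homotopy of $[0,1]\z[0,1]$ into $M$, for which one may use a standard collar construction near each $t_i$.
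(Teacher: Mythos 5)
Your overall strategy coincides with the paper's: take a fine subdivision of $\ga$ subordinate to the cover $\mathcal{O}$, define $\ti\ga$ by stringing together the edges of $\Si$ joining the centers of consecutive sets, and homotope $\ga$ onto $\ti\ga$ one arc at a time using the local contractions of Lemma~\ref{lem1_c} and Lemma~\ref{trapezoid contraction}. Your ``Phase 1 / Phase 2'' is essentially a repackaging of the paper's Steps 1--3 (attach the spurs $\si_m$, insert $\ti\ga_m\cup(-\ti\ga_m)$, then contract each 4-gon). Up to bookkeeping of constants the two arguments are the same in the ball--ball and trapezoid--trapezoid cases.

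The genuine gap is in the mixed case, which is precisely the case the paper spends most of its effort on. After your Phase 1, the deformed arc on $[t_i,t_{i+1}]$ together with the reversed edge $e_i$ forms a loop that lies in $\bar U_i\cup\bar U_{i+1}$. If $U_i$ is a harmonic ball $B_{r(x_i)}(x_i)$ and $U_{i+1}$ is a trapezoid $T^k_{j,l}$ (or vice versa), that union is \emph{not} contained in a single ball $B_{R(x)}(x)$ nor in a single $\bar T^k_{j,l}$, so neither Lemma~\ref{lem1_c} nor Lemma~\ref{trapezoid contraction}(3) applies directly to the loop, and Phase 2 as written has nothing to invoke. You even flag an obstacle of this flavor, but you misdiagnose it: you worry about consecutive trapezoids from \emph{different} necks, yet by construction trapezoids in different necks are disjoint and thus can never be consecutive in a Lebesgue subdivision; the real problem is the ball--trapezoid adjacency. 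The two-phase rearrangement does not circumvent this, because Phase 2 still must contract a loop that straddles a ball and a trapezoid ``separately inside its own $\bar U_i$'' --- but there is no single $\bar U_i$ containing it. The paper resolves this by first establishing the geometric fact that $B_{R(x_i)}(x_i)\subset\Nn_j^k$ whenever $B_{r(x_i)}(x_i)\cap T^k_{j,l}\neq\emptyset$ (relying on the extra structure demanded of the subdivision, with the boundary point $\ga(t'_m)$, and on the size constraints between harmonic balls, bodies, and necks), replacing a short minimizing geodesic by a curve $\delta\subset\bar T^k_{j,l}$ via Lemma~\ref{trapezoid contraction}(2), and then splitting the 4-gon into two loops each lying wholly in one contractible set. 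Without this ingredient, the claimed $60D$ bound is not justified in the mixed case; you would need to reproduce the containment $B_{R(x_i)}(x_i)\subset\Nn_j^k$ and the accompanying decomposition of the 4-gon to close the argument.
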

\begin{proof}
Suppose $\ga:[0,1]\ra M$ is a closed curve. We are going to first decompose the curve into the open sets constructed above that cover bodies and necks. We choose a sufficiently fine subdivision $0=t_0<t_1<\dots<t_n=1$ of $[0,1]$ which satisfies the following condition:
\begin{enumerate}
\item
If $\ga(t_m)\in B_{r(x_i)}(x_i)$ and $\ga(t_{m+1})\in B_{r(x_j)}(x_j)$, then the intersection $B_{r(x_i)}(x_i)\cap B_{r(x_j)}(x_j)$ is nonempty, and $\ga([t_m,t_{m+1}])\su B_{r(x_i)}(x_i)\cup B_{r(x_j)}(x_j)$.

\item
If $\ga(t_m)\in T^k_{j,i}$ and $\ga(t_{m+1})\in T^k_{j,l}$, then the intersection $T^k_{j,i}\cap T^k_{j,l}$ is nonempty, and $\ga([t_m,t_{m+1}])\su T^k_{j,i}\cup T^k_{j,l}$.

\item
If $\ga(t_m)\in B_{r(x_i)}(x_i)$ and $\ga(t_{m+1})\in T^k_{j,l}$, then $\ga(t_{m+1}) \not\in \overline{B_{r(x_i)}(x_i)}$, where $\overline{B_{r(x_i)}(x_i)}$ is the closure of the metric ball, and $B_{r(x_i)}(x_i) \cap  T^k_{j,l} \not= \emptyset $, $\ga([t_m,t_{m+1}])\su B_{r(x_i)}(x_i) \cup T^k_{j,l}$.  Moreover, we require that there is $t'_m \in [t_m,t_{m+1}]$ such that $ \ga(t'_m)$ is in the boundary of $B_{r(x_i)}(x_i)$ ,  $\ga([t_m, t'_m]) \subset \overline{B_{r(x_i)}(x_i)}$ and $\ga([t'_m, t_{m+1}]) \subset T^k_{j,l}$.
The condition is similar if
$\ga(t_{m+1})\in B_{r(x_i)}(x_i)$ and $\ga(t_{m})\in T^k_{j,l}$.

\end{enumerate}

The loop $\ti{\ga}$ is constructed in the following way. Suppose that $\mathcal{O}$ is the covering of $M$ constructed in Lemma~\ref{lem2_c}. Based on the partition above, if $\ga(t_m)$ and $\ga(t_{m+1})$ are in two open sets in $\mathcal{O}$, then the intersection of these two open sets is non-empty and there is an edge in $\Si$ connecting the centers of the open sets. We pick $\ti{\ga}$ to be the union of edges in $\Si$ connecting the centers in the open sets which $\ga(t_m)$ lies in. Moreover, if $\ga$ is a closed curve, then $\ti{\ga}$ is a loop in $\Si$.

\begin{figure}[htbp]
\begin{minipage}[c]{0.47\textwidth}
\centering\includegraphics[width=7cm]{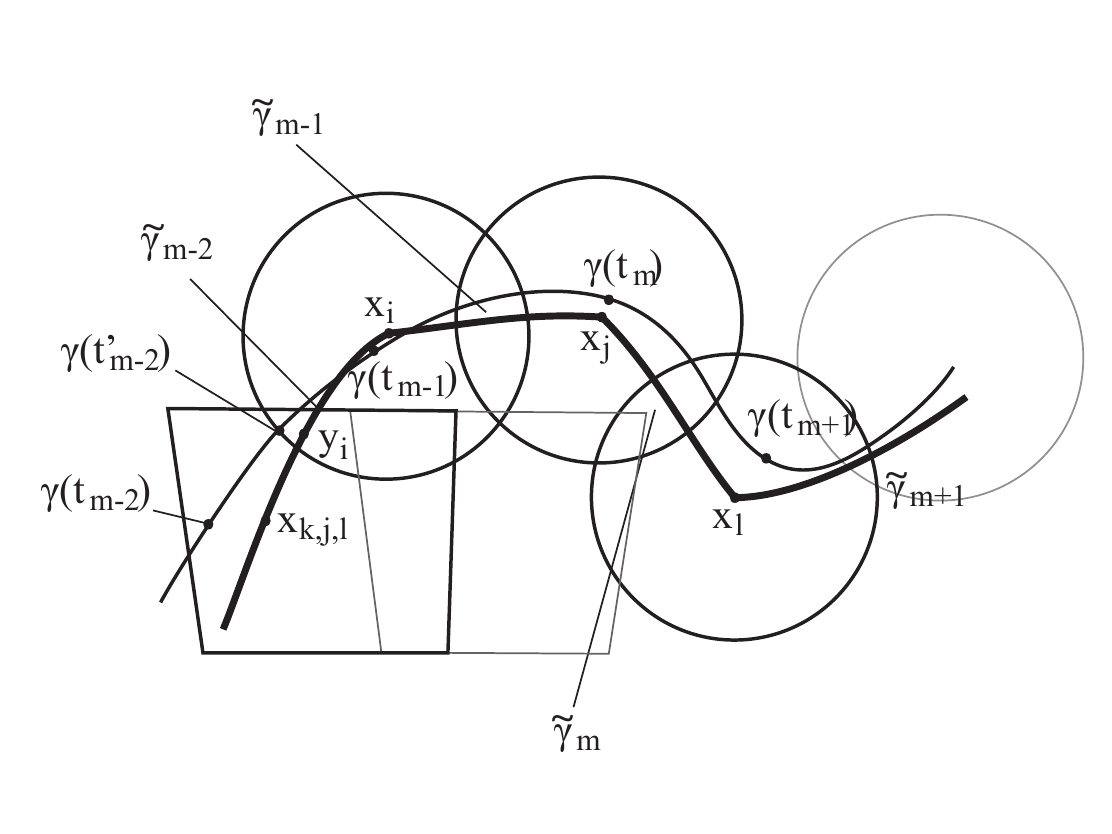}
\vspace*{6pt}
\caption{Take a subdivision of $\ga$ and choose $\ti{\ga}_k$.}\label{fig1}
\end{minipage}
\begin{minipage}[c]{0.47\textwidth}
\centering\includegraphics[width=7cm]{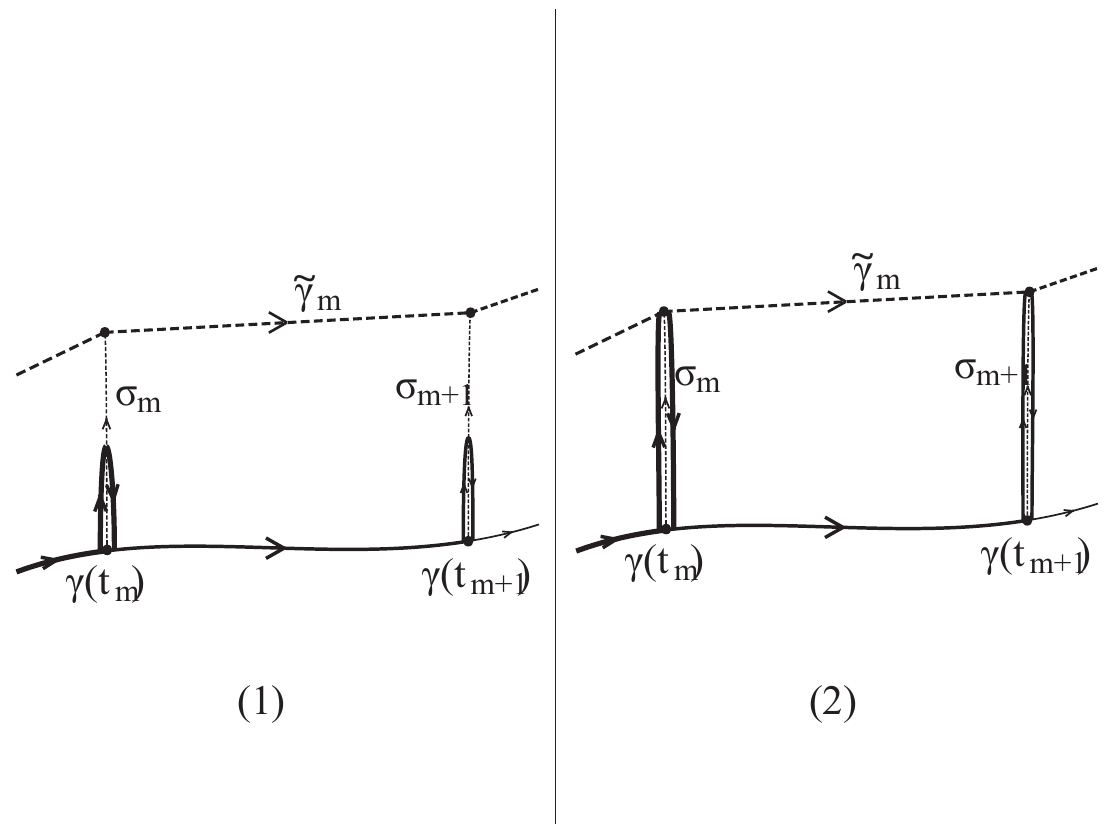}
\vspace*{6pt}
\caption{$\ga$ is homotopic to $\ga\cup_m (\si_m\cup (-\si_m))$.}\label{fig2}
\end{minipage}
\end{figure}

\begin{figure}[htbp]
\begin{minipage}[c]{0.47\textwidth}
\centering\includegraphics[width=7cm]{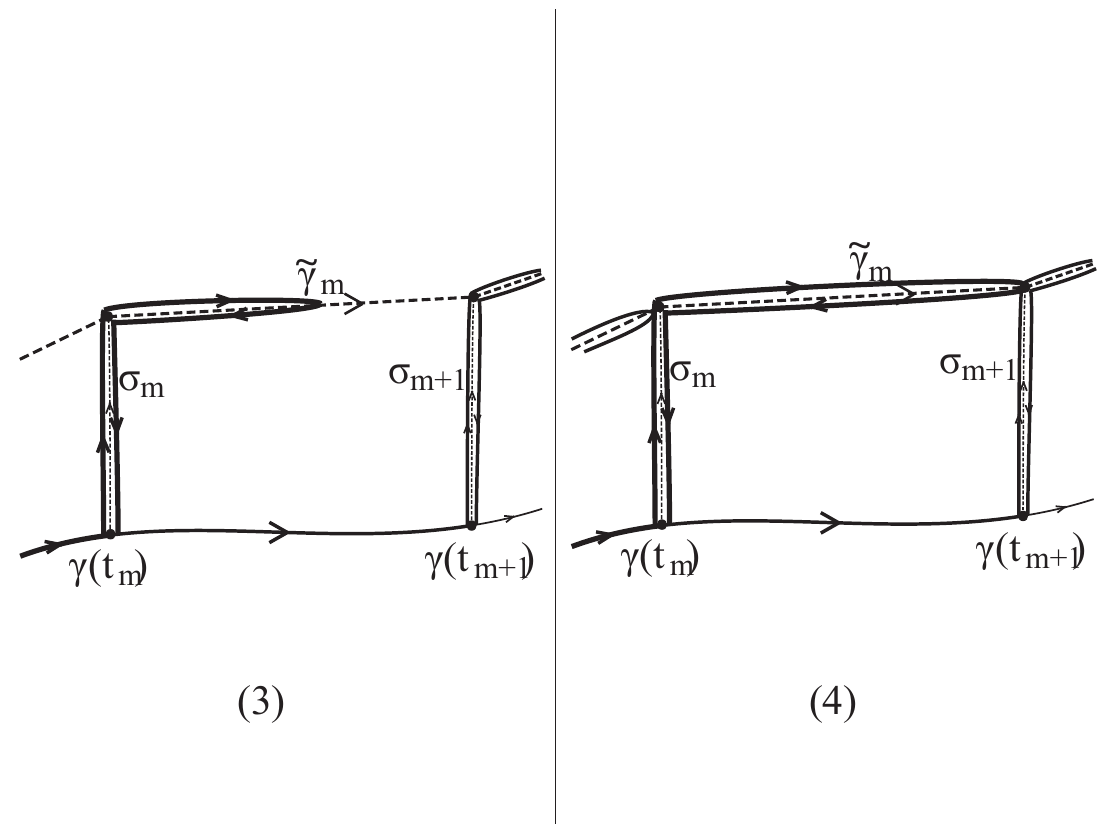}
\vspace*{6pt}
\caption{$\ga$ is homotopic to $\ga\cup_m (\si_m\cup \ti{\ga}_m \cup (-\ti{\ga}_m)\cup (-\si_m))$.}\label{fig3}
\end{minipage}
\begin{minipage}[c]{0.47\textwidth}
\centering\includegraphics[width=7cm]{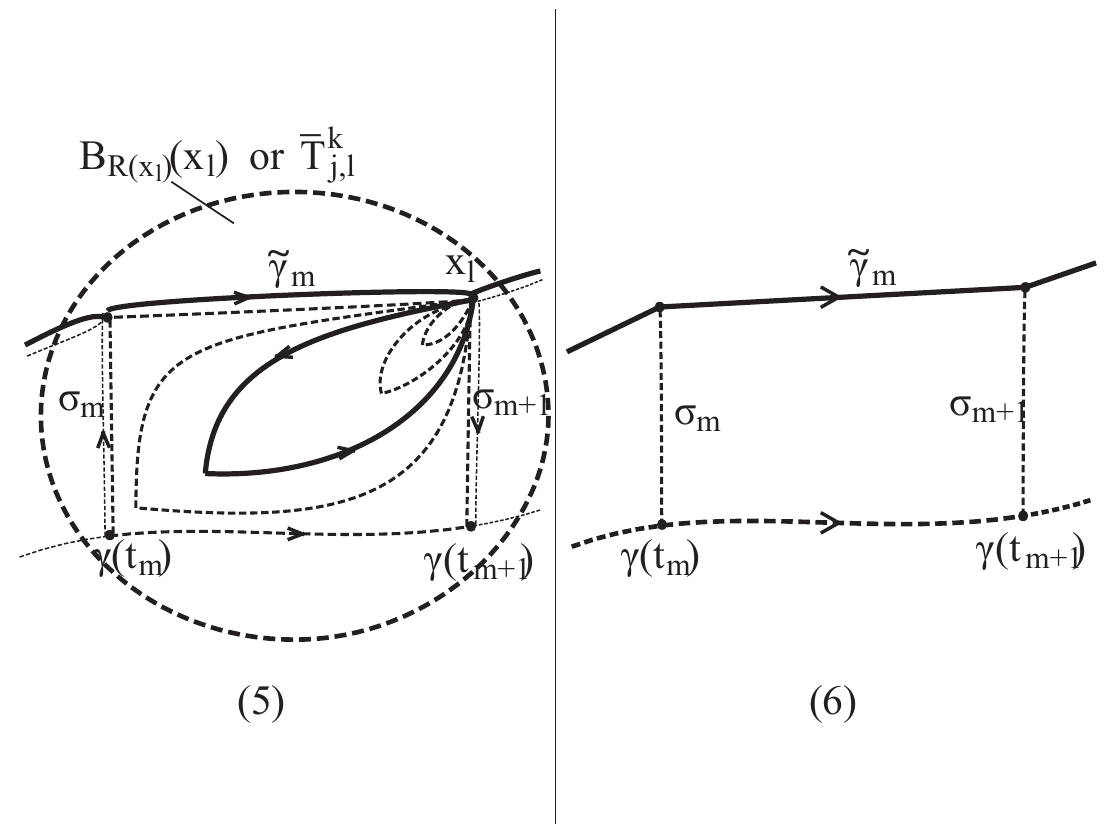}
\vspace*{6pt}
\caption{Contracting the loop $(-\ti{\ga}_m)\cup(-\si_m)\cup\ga([t_m,t_{m+1}])\cup\si_{m+1}$ and we get $\ti{\ga}$.}\label{fig4}
\end{minipage}
\end{figure}

We will show below that $\ga$ is homotopic to $\ti{\ga}$ through a homotopy of width bounded by $60D$. The construction is similar to the proof of \cite[Lemma 3.3]{rotman2000upper}. We will describe the homotopy by describing the image of the curve $\ga$ under the homotopy. The homotopy is constructed in the following three steps:

Step 1: Since $\ga(t_m)$ is in an open set in the covering, we connect $\ga(t_m)$ to the center of the open set by a curve $\si_m$. In particular, if $\ga(t_m)\in B_{r(x_i)}(x_i)$, then $\si_m$ is a minimizing geodesic between $\ga(t_m)$ and $x_i$ and $\length(\si_m) \leq D$. If $\ga(t_m)\in T^k_{j,l}$, then $\si_m$ is a curve between $\ga(t_m)$ and $x_i$ constructed in Lemma~\ref{trapezoid contraction} and $\length(\si_m) \leq 3D$. Now $\ga$ is homotopic to $\ga \cup_m (\si_m\cup (-\si_m))$ through a homotopy with width $\leq 2\length(\si_m)\leq 6D$. (See Figure~\ref{fig2}).

Step 2: Recall that when $\ga(t_m)$ and $\ga(t_{m+1})$ are in two open sets in $\mathcal{O}$, then the intersection of these two open sets is non-empty and there is an edge $\ti{\ga}_m$ in $\Si$ connecting the centers of the open sets. Based on the construction of $\Si$, $\length(\ti{\ga}_m) \leq 6D$. Hence $\ga \cup_m (\si_m\cup (-\si_m))$ is  homotopic to $\ga\cup_m (\si_m\cup \ti{\ga}_m \cup (-\ti{\ga}_m)\cup (-\si_m))$  with width $\leq 2\cdot 6D = 12D$. (See Figure~\ref{fig3}).

Step 3: We claim that each 4-gon $\ga([t_m,t_{m+1}]) \cup \si_{m+1} \cup (-\ti{\ga}_m) \cup (-\si_{m})$ can be contracted to the center of the open set where $\ga(t_{m+1})$ lies in. The width of the homotopy is bounded by $42D$.

There are three cases to be discussed.

\begin{enumerate}
\item
If $\ga(t_m)\in B_{r(x_i)}(x_i)$ and $\ga(t_{m+1})\in B_{r(x_j)}(x_j)$, then based on our construction,
 $\ga([t_m,t_{m+1}]) \cup\si_{m+1} \cup (-\ti{\ga}_m)\cup(-\si_m) \su B_{r(x_i)}(x_i) \cup B_{r(x_j)}(x_j)$.
If $r(x_i)\leq r(x_j)$, then $B_{r(x_i)}(x_i) \cup B_{r(x_j)}(x_j) \su B_{R(x_j)}(x_j)$ and
we can contract the 4-gon to the point $x_j$ within $B_{R(x_j)}(x_j)$. The width of the contraction, by Lemma~\ref{lem1_c} is bounded by $D$. (See Figure~\ref{fig4}). If $r(x_i) \geq r(x_j)$, then $\ga([t_m,t_{m+1}]) \cup\si_{m+1} \cup (-\ti{\ga}_m)\cup(-\si_m)$ is hompotic to $\ga([t_m,t_{m+1}]) \cup\si_{m+1} \cup (-\ti{\ga}_m) \cup\ti{\ga}_m \cup (-\ti{\ga}_m) \cup(-\si_m)$ with width bounded by $2D$. $\ga([t_m,t_{m+1}]) \cup\si_{m+1} \cup (-\ti{\ga}_m) \cup\ti{\ga}_m \cup (-\ti{\ga}_m) \cup(-\si_m)$ is contained in  $B_{r(x_i)}(x_i) \cup B_{r(x_j)}(x_j) \su B_{R(x_i)}(x_i)$, by Lemma~\ref{lem1_c} it is homotopic to $(-\ti{\ga}_m) \cup\ti{\ga}_m$ with width bounded by $D$. And $ (-\ti{\ga}_m) \cup\ti{\ga}_m$ can be contracted to $x_j$ with width bounded by $2D$. Hence,  if $r(x_i) \geq r(x_j)$, the 4-gon  $\ga([t_m,t_{m+1}]) \cup \si_{m+1} \cup (-\ti{\ga}_m) \cup (-\si_{m})$ can be contracted to $x_j$ with width bounded by $2D+D+2D=5D$.

\item
If $\ga(t_m)\in T^k_{j,i}$ and $\ga(t_{m+1})\in T^k_{j,l}$,  then based on our construction,
 $\ga([t_m,t_{m+1}]) \cup\si_{m+1} \cup (-\ti{\ga}_m)\cup(-\si_m) \su  T^k_{j,i} \cup T^k_{j,l} \su \bar {T}^k_{j,l}$. Hence, by Lemma~\ref{trapezoid contraction}, the 4-gon can be contracted to $x_{k,j,l}$ with width bounded by $21D$.

\item
Consider the case when $\ga(t_m)\in B_{r(x_i)}(x_i)$ and $\ga(t_{m+1})\in T^k_{j,l}$, (similarly when $\ga(t_{m+1})\in B_{r(x_i)}(x_i)$ and $\ga(t_{m})\in T^k_{j,l}$).

 First note that the intersection $B_{r(x_i)}(x_i) \cap  T^k_{j,l} \not= \emptyset$. By the definition of $T^k_{j,l}$, we have either $B_{r(x_i)}(x_i) \cap A_{r_j^{k-1},(1+\frac{7}{2}\ep) r_j^{k-1}} (x_j^{k-1})  \not= \emptyset$, or $B_{r(x_i)}(x_i) \cap A_{(2-3\ep)\bar{r}_j^{k-1},2\bar{r}_j^{k-1}} (x_j^{k-1}) \not= \emptyset$. We claim that $$B_{4r(x_i)}(x_i) =B_{R(x_i)}(x_i) \subset \Nn_j^k.$$ Suppose not. If $B_{r(x_i)}(x_i) \cap  A_{r_j^{k-1},(1+\frac{7}{2}\ep) r_j^{k-1}} (x_j^{k-1}) \not= \emptyset$ and $B_{4r(x_i)}(x_i) \not \su \Nn_j^k$, then $\Nn_j^k \subset B_{32r(x_i)}(x_i) \subset B_{r_h(x_i)}(x_i)$, which is a contradiction. On the other hand, if $B_{r(x_i)}(x_i) \cap A_{(2-3\ep)\bar{r}_j^{k-1},2\bar{r}_j^{k-1}} (x_j^{k-1}) \not= \emptyset$ and $B_{4r(x_i)}(x_i) \not \su \Nn_j^k$, then $\Bb^k_j \subset B_{32r(x_i)}(x_i)$, which violates the assumption in Lemma \ref{lem2_c}. Hence, in any case, we have $B_{R(x_i)}(x_i) \subset \Nn_j^k$.

Now recall that there is a point $y$ in $B_{r(x_i)}(x_i) \cap  T^k_{j,l}$, so that the edge $\ti{\ga}_m$ between $x_i$ and $x_{k,j,l}$ in $\Si$ consists of a minimizing geodesic $\ti{\ga}_{m,1}$ between $y$ and $x_i$ and a curve $\ti{\ga}_{m,2}$ in $\bar{T}^k_{j,l}$ between $y$ and $x_{k,j,l}$. Let us pick $t_m<t'_m<t_{m+1}$ such that $\ga(t'_m) \in \partial B_{r(x_i)}(x_i)$ as described in (3) above.  The geodesic distance in $M$ between $y$ and $\ga(t'_m)$ is less than $2r(x_i)$. Hence the minimizing geodesic between  $\ga(t'_m)$ and $y$ is contained in $B_{R(x_i)}(x_i) \subset \Nn_j^k$. By Lemma~\ref{trapezoid contraction}, we can connect $y$ and $\ga(t'_m)$ by a curve $\delta$ in $\bar{T}^k_{j,i}$ with length less than $2\x \text{distance}(y,\ga(t'_m))\leq 4r(x_i)$. Now, the triangle $x_i y \ga(t'_m)$ has circumference less than $6r(x_i)$, hence the 4-gon $(-\si_m) \cup \ga([t_m, t'_m]) \cup \delta \cup (-\ti{\ga}_{m,1})$ is in $B_{R(x_i)}(x_i)$, in particular $\delta \su B_{R(x_i)}(x_i) \cap  T^k_{j,l}$. Then, the contraction of  $\ga([t_m, t_{m+1}]) \cup\si_{m+1} \cup (-\ti{\ga}_m)\cup(-\si_m)$ to $x_{k,j,l}$ can be described in the following four steps.

\begin{figure}[htbp]
\centering\includegraphics[width=7cm]{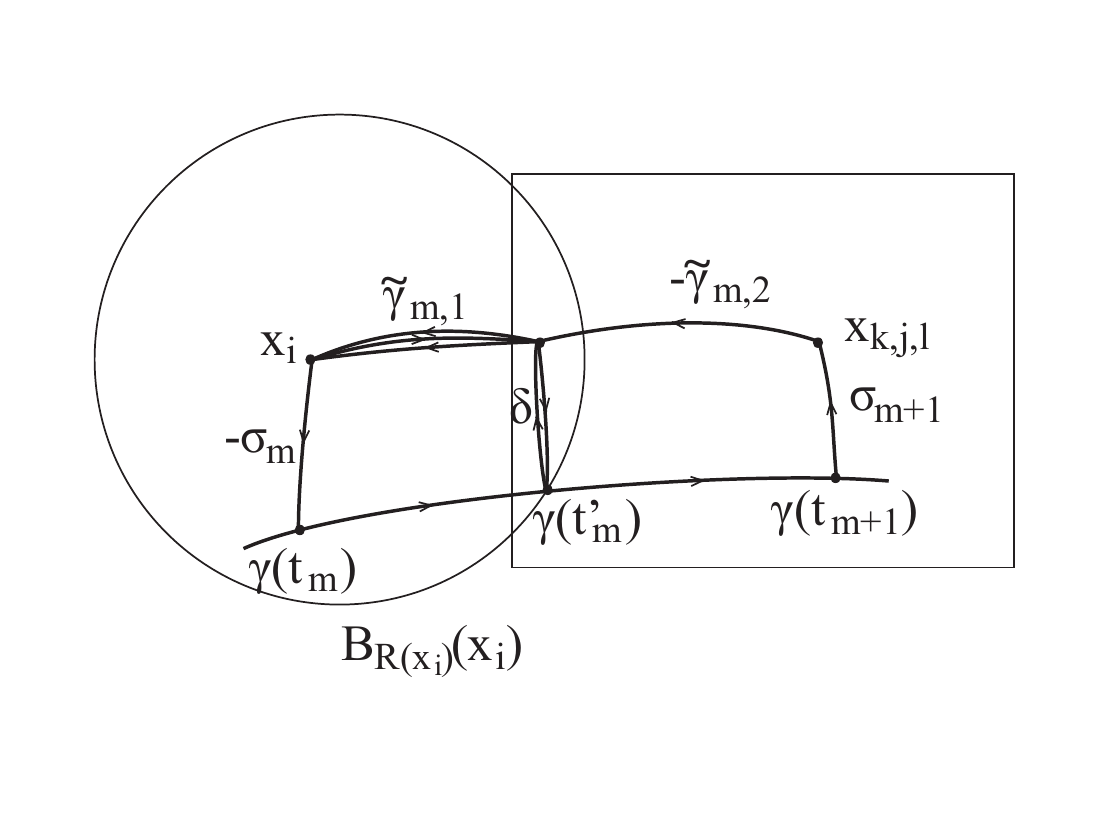}
\vspace*{6pt}
\caption{$\ga([t_m, t_{m+1}]) \cup\si_{m+1} \cup (-\ti{\ga}_m)\cup(-\si_m)$ is homotopic to $(-\si_m) \cup \ga([t_m, t'_m]) \cup [\delta \cup (-\ti{\ga}_{m,1})  \cup (\ti{\ga}_{m,1})  \cup (-\delta)] \cup  \ga([t'_m, t_{m+1}]) \cup\si_{m+1} \cup (-\ti{\ga}_{m,2}) \cup (-\ti{\ga}_{m,1})$}\label{ball trape case}
\end{figure}

First, note that $\ga([t_m, t_{m+1}]) \cup\si_{m+1} \cup (-\ti{\ga}_m)\cup(-\si_m) =(-\si_m) \cup \ga([t_m, t'_m]) \cup \ga([t'_m, t_{m+1}]) \cup\si_{m+1} \cup (-\ti{\ga}_{m,2}) \cup (-\ti{\ga}_{m,1}).$
It is homotopic to $(-\si_m) \cup \ga([t_m, t'_m]) \cup [\delta \cup (-\ti{\ga}_{m,1})  \cup (\ti{\ga}_{m,1})  \cup (-\delta)] \cup  \ga([t'_m, t_{m+1}]) \cup\si_{m+1} \cup (-\ti{\ga}_{m,2}) \cup (-\ti{\ga}_{m,1})$ with width bounded by $2 \length(\ti{\ga}_{m,1})+2\length(\delta) \leq 10 D$.  (See Figure~\ref {ball trape case} )

Second, we can contract $(-\si_m) \cup \ga([t_m, t'_m]) \cup \delta \cup (-\ti{\ga}_{m,1}) $ in $B_{R(x_i)}(x_i)$ and end up with $(\ti{\ga}_{m,1})  \cup (-\delta) \cup  \ga([t'_m, t_{m+1}]) \cup\si_{m+1} \cup (-\ti{\ga}_{m,2}) \cup (-\ti{\ga}_{m,1})$.By Lemma~\ref{lem1_c}, the width is bounded by $D$.

Third,$(\ti{\ga}_{m,1})  \cup (-\delta) \cup  \ga([t'_m, t_{m+1}]) \cup\si_{m+1} \cup (-\ti{\ga}_{m,2}) \cup (-\ti{\ga}_{m,1})$ is homotopic to $(-\delta) \cup  \ga([t'_m, t_{m+1}]) \cup\si_{m+1} \cup (-\ti{\ga}_{m,2}) $ with width bounded by $2 \length(\ti{\ga}_{m,1}) \leq 2D$.

At last, by Lemma~\ref{trapezoid contraction}, $(-\delta) \cup  \ga([t'_m, t_{m+1}]) \cup\si_{m+1} \cup (-\ti{\ga}_{m,2}) $  can be contracted to $x_{k,j,l}$ with width bounded by $21D$.

If we sum up the width in the above four steps, we conclude that the 4-gon $\ga([t_m,t_{m+1}]) \cup \si_{m+1} \cup (-\ti{\ga}_m) \cup (-\si_{m})$ can be contracted to $x_{k,j,l}$ with width bounded by $10D+D+2D+21D = 34D$.

The case when $\ga(t_{m+1})\in B_{r(x_i)}(x_i)$ and $\ga(t_{m})\in T^k_{j,l}$ can be discussed similarly. In this case, the 4-gon $\ga([t_m,t_{m+1}]) \cup \si_{m+1} \cup (-\ti{\ga}_m) \cup (-\si_{m})$ can be contracted to $x_i$ with width bounded by $42D$.
\end{enumerate}

If we combine Steps 1, 2 and 3, we conclude that $\ga$ is homotopic to $\ti{\ga}$ with width bounded by $6D+12D+42D=60D$.

\end{proof}

Given a curve $\ga$ in $M$, in general, its approximation $\ti{\ga}$ can have arbitrarily large simplicial length. Therefore we apply Lemma~\ref{lm2} below to decompose the curve $\ti{\ga}$ into the wedge of curves with bounded simplicial length.

\begin{lemma}\label{lm2}
Let $\al$ be a loop in $\Si$. Then $\al$ is homotopic to another  loop $\al'$ in $\Si$ through a homotopy $H$ with the following properties:
\begin{enumerate}
\item $\al'$ can be represented by the union of $m(\al')$ curves $\cup_{i=1}^{m(\al')} \al_i'$, where each $\al_i'$ is a  loop in $\Si$ with a common base point and the simplicial length $m(\al_i')\leq 2\ti{N}^2+1$.
\item $\om_H\leq 12\ti{N}^2D$.
\end{enumerate}
\end{lemma}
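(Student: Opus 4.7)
The plan is to exploit the fact that $\Si$ is a finite graph with at most $\ti{N}$ vertices and at most $\ti{N}^2$ edges, each of length at most $6D$ in $M$ (the worst case being type-2 edges between trapezoids in a neck, constructed in Section~\ref{sec3}). First I would fix a base point $v_0 := \al(0)$, which is a vertex of $\Si$ since $\al$ is simplicial, and choose a spanning tree $T$ of the connected component of $\Si$ containing the image of $\al$. For every vertex $v$ in this component, let $\tau_v \subset T$ denote the unique simple path from $v_0$ to $v$; since $T$ has at most $\ti{N}-1$ edges, the simplicial length of $\tau_v$ is at most $\ti{N}-1$ and $\length(\tau_v) \leq 6D(\ti{N}-1)$.

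Next, I would write $\al$ as a concatenation of oriented edges $e_1 \cdot e_2 \cdots e_n$ visiting the vertex sequence $v_0 = u_0, u_1, \ldots, u_n = v_0$. For each $i \in \{1, \ldots, n\}$, set
$$\al'_i := \tau_{u_{i-1}} \cdot e_i \cdot \tau_{u_i}^{-1},$$
a loop in $\Si$ based at $v_0$, and define $\al' := \al'_1 \cdot \al'_2 \cdots \al'_n$, which is a wedge of $n$ loops at the common base point $v_0$. Each $\al'_i$ consists of two tree paths and one edge, so its simplicial length is at most $2(\ti{N}-1)+1 \leq 2\ti{N}^2 + 1$. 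This verifies conclusion (1).

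For conclusion (2), I would construct the homotopy $H$ from $\al$ to $\al'$ by simultaneously inserting a ``spike'' $\tau_{u_i}^{-1} \cdot \tau_{u_i}$ at each intermediate vertex $u_i$. Concretely, around each parameter $s_i \in [0,1]$ with $\al(s_i) = u_i$ I would reserve a small subinterval $I_i \subset [0,1]$, chosen pairwise disjoint; on $I_i$ at time $t$ the curve is reparametrized to include a detour $\tau_{u_i}^{-1} \cdot \tau_{u_i}$ of fractional length $t$, while outside $\bigcup_i I_i$ the homotopy is constant. Because the $I_i$ are disjoint, the trajectory of any single point during $H$ lies along at most one tree path $\tau_{u_i}$ and has length at most $2\length(\tau_{u_i}) \leq 12D(\ti{N}-1) \leq 12\ti{N}^2 D$, which bounds $\om_H$ as required.

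The main technical point, which I expect to be the only delicate step, is assembling the individual spike-insertions into a single homotopy whose width does not accumulate across the many vertices $u_i$; disjointness of the intervals $I_i$ is exactly what guarantees that each point of $[0,1]$ is acted on by at most one spike. Connectedness of $\Si$ along the image of $\al$, needed to define the tree $T$ on a single component, is automatic since $\al$ is a continuous simplicial curve in $\Si$.
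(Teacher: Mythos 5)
Your proof is correct and takes essentially the same approach as the paper: decompose $\al$ into its constituent edges, connect each intermediate vertex to a fixed base vertex by a path in $\Si$, and conjugate each edge by these connecting paths to produce the wedge of short loops; the width estimate in both cases comes from observing that any point's trajectory under the homotopy runs along at most one connecting path. The only difference is cosmetic: you use a spanning tree to choose the connecting paths, giving the slightly sharper simplicial-length bound $2(\ti{N}-1)+1$, whereas the paper simply invokes path-connectedness of $\Si$ and bounds each connecting path by the total edge count $\ti{N}^2$; both comfortably satisfy the stated $2\ti{N}^2+1$.
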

\begin{proof}
Suppose $\al:[0,1]\ra\Si$ is a  loop in $\Si$. We choose a partition  $\{t_i\}_{i=1}^{m(\al)}$ of $[0,1]$ such that for any $i$, $\al(t_i)$ is a vertex in $\Si$ and the arc $\al_i=\al([t_i,t_{i+1}])$ is an edge in $\Si$. (See Figure~\ref{fig5}.)

\begin{figure}[htbp]
\begin{minipage}[c]{0.47\textwidth}
\centering\includegraphics[width=7cm]{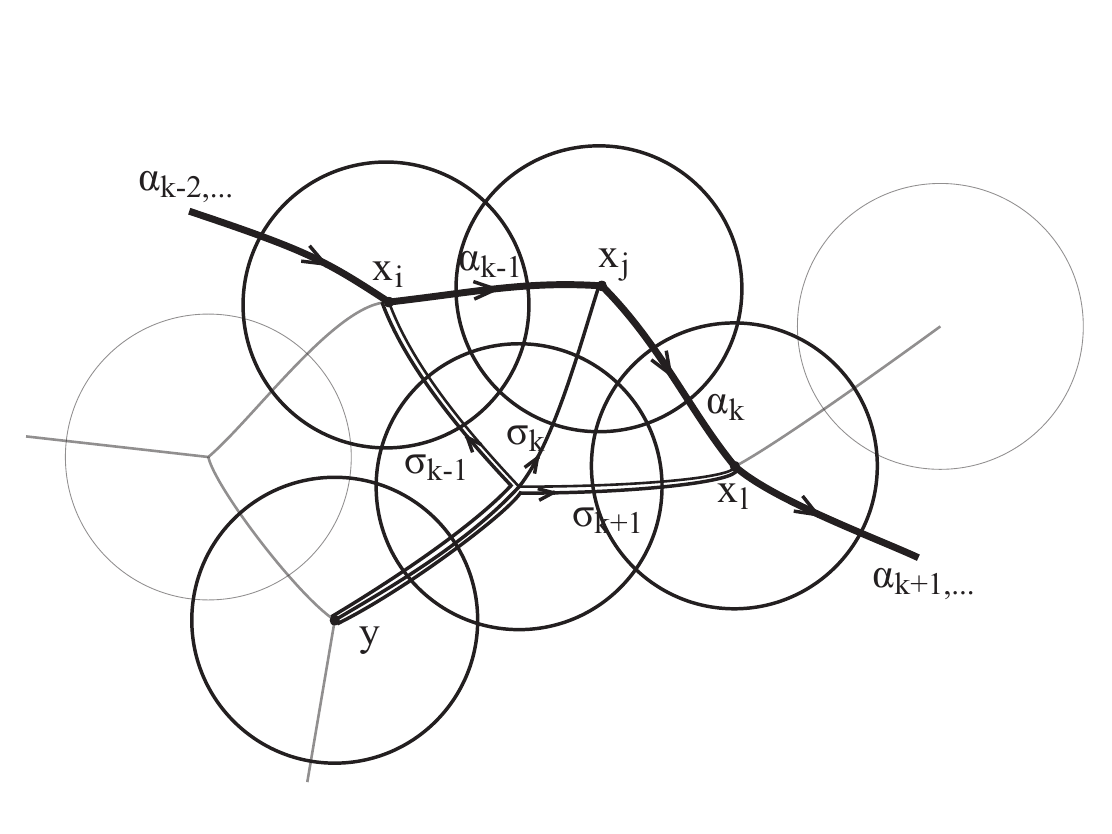}
\vspace*{6pt}
\caption{Subdivision of the curve $\al$ and pick $\si_k$.}\label{fig5}
\end{minipage}
\begin{minipage}[c]{0.47\textwidth}
\centering\includegraphics[width=7cm]{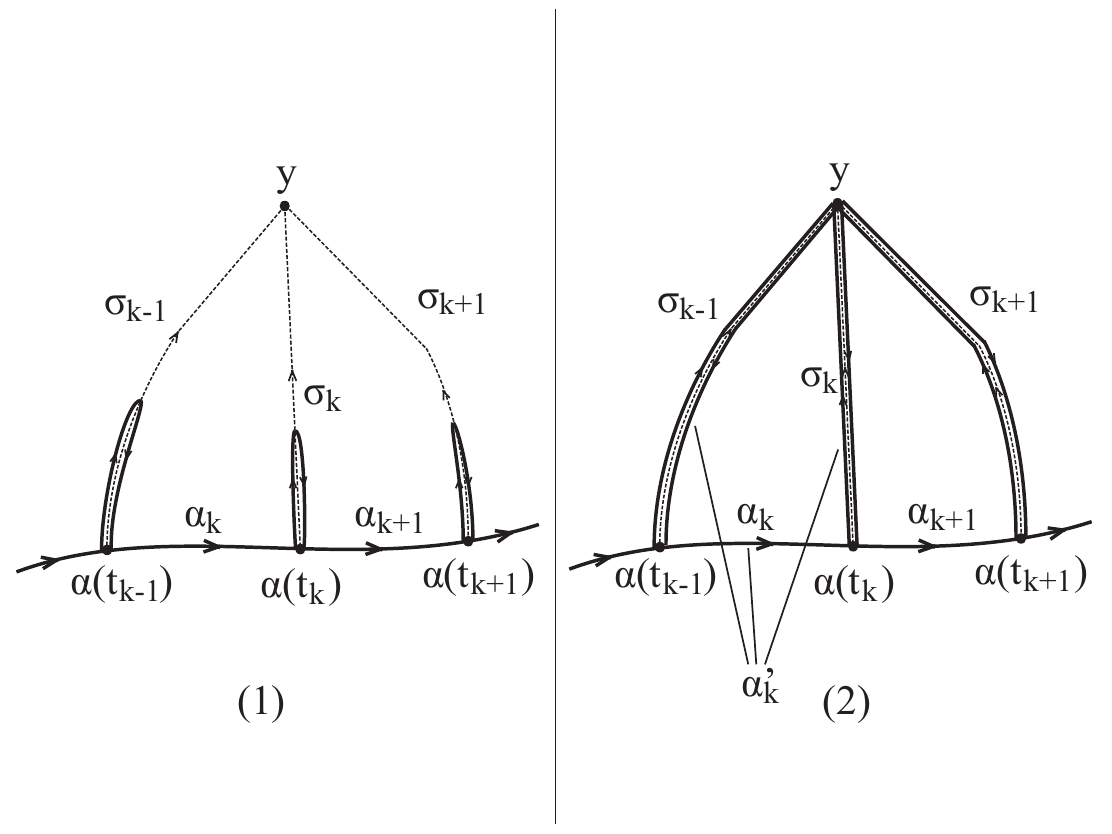}
\vspace*{6pt}
\caption{$\al$ is homotopic to $\al_1'\cup\al_2'\cup\dots\cup\al_{m(\al)}'$.}\label{fig6}
\end{minipage}
\end{figure}

Now we pick any vertex $y\in \Si$ and we connect $y$ with the vertices $\al(t_i)$ by a curve $\si_i \su \Si$. Note that this is possible since $\Si$ is path connected. Furthermore, the simplicial length $m(\si_i)$ is bounded by the total number of the edges $\ti{N}^2$ in $\Si$. Let $\al_i'=\si_i\cup \al_i\cup (-\si_{i+1})$. Then $\al$ is homotopic to $\al_1'\cup\al_2'\cup\dots\cup\al_{m(\al)}'$. (See Figure~\ref{fig6}.)  Since the length of each edge in $\Si$ is bounded by $6D$, the width of the homotopy is bounded by $2\cdot \length(\si_i)$, which is bounded by $12 \ti{N}^2D$.
\end{proof}

In the next Lemma, we show that how to homotope the curves that are "close" to each other in the  graph $\Si$.

\begin{lemma}\label{lm3}
Let $x_1,x_2,x_3$ be three vertices in $\Si$. Let $\al_i:[0,1]\ra \Si$, $i=1,2$, be two piecewise  loops in $\Si$ with the following properties: (See Figure~\ref{fig7}(1).)
\begin{enumerate}
\item For some $0\leq t_1<t_2\leq 1$, $\al_1(t_1)=x_1$ and $\al_1(t_2)=x_2$.
\item For some $0\leq t_1'<t_2'<t_3'\leq 1$, $\al_2(t_1')=x_1$, $\al_2(t_2')=x_3$ and $\al_2(t_3')=x_2$.
\item If the point $x_i$ belongs to some open sets $B_{r(x_i)}(x_i)$ or $T^k_{j,i}$ in $\mathcal{O}$, respectively, then the intersection between the open sets, pairwise, is non-empty. (See Figure below)
\item $\al_1([0,t_1])$=$\al_2([0,t_1'])$ and $\al_1([t_2,1])=\al_2([t_3',1])$.
\end{enumerate}
Then $\al_1$ is homotopic to $\al_2$ through a homotopy $H$ with width $\om_H\leq 66D$.
\end{lemma}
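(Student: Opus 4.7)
The strategy is to first localize the problem to contracting a short triangular loop in $\Sigma$, and then dispatch the geometric cases using the contractibility results of Lemma~\ref{lem1_c} and Lemma~\ref{trapezoid contraction}. Because $\al_1$ and $\al_2$ coincide on $[0,t_1]=[0,t_1']$ and on $[t_2,1]=[t_3',1]$, a homotopy between them may be chosen to fix these common portions pointwise. Hence it suffices to build an endpoint-fixing homotopy between $\al_1|_{[t_1,t_2]}\colon x_1\to x_2$ and $\al_2|_{[t_1',t_3']}\colon x_1\to x_3\to x_2$; equivalently, to contract to $x_1$ the loop
\[
L=\al_1([t_1,t_2])\cup(-\al_2([t_2',t_3']))\cup(-\al_2([t_1',t_2'])),
\]
which traces the triangle $x_1\to x_2\to x_3\to x_1$ along three edges of $\Sigma$, each of length at most $6D$.

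Hypothesis~(3) says the three open sets $O_1,O_2,O_3\in\mathcal{O}$ centered at $x_1,x_2,x_3$ are pairwise intersecting. The proof then proceeds by cases according to whether each $O_i$ is a harmonic ball or a trapezoid. In the all-ball case, if $B_{r(x_1)}(x_1)$ is the largest, the triangle inequality places $B_{r(x_2)}(x_2)\cup B_{r(x_3)}(x_3)\subset B_{R(x_1)}(x_1)$; after possibly doubling back along each edge whose endpoint-ball is smaller than the others (as in Case~(1) of Step~3 of Lemma~\ref{lm1}), all three edges of $L$ lie in $B_{R(x_1)}(x_1)$, and Lemma~\ref{lem1_c} contracts $L$ there with width bounded by a small multiple of $D$. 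In the all-trapezoid case, disjointness of different necks forces $O_1,O_2,O_3\subset \Nn_j^{k}$ to lie in a common neck, and pairwise intersection places $O_1\cup O_2\cup O_3$ inside some large trapezoid $\bar{T}^{k}_{j,i}$; Lemma~\ref{trapezoid contraction}(3) then contracts $L$ with width at most $21D$.

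The remaining mixed cases are handled in the spirit of Case~(3) of Step~3 of Lemma~\ref{lm1}. The key observation is that if a harmonic ball $B_{r(x)}(x)$ meets a trapezoid sitting in a neck $\Nn_j^{k+1}$, then $B_{R(x)}(x)\subset\Nn_j^{k+1}$; otherwise the enlarged ball would swallow either the neck or the adjacent body $\Bb^{k+1}_{j}$, contradicting the harmonic-radius ratio bounds used in Lemma~\ref{lem2_c}. This allows one to join a ball-center $x_i$ to a trapezoid-center $x_{k,j,l}$ by a short auxiliary curve inside a single enlarged trapezoid $\bar{T}^{k}_{j,l}$; by inserting and cancelling such auxiliary curves, the triangle reduces to a contractible loop inside either a single harmonic ball or a single $\bar{T}^{k}_{j,l}$. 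Bookkeeping the contributions across these three phases, exactly in the style of the $6D+12D+42D$ accounting in Lemma~\ref{lm1}, delivers a total width no larger than $66D$. The main obstacle is precisely the mixed case, where one must juggle both Lemma~\ref{lem1_c} and Lemma~\ref{trapezoid contraction} while certifying that every intermediate subloop lies in a single contractible region.
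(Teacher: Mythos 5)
Your proposal follows the paper's proof essentially step for step: it reduces the problem to contracting the triangle $x_1\to x_2\to x_3\to x_1$ traced by the three edges of $\Si$, then handles the four configurations (all balls, all trapezoids, one trapezoid, two trapezoids) by appealing to Lemma~\ref{lem1_c} and Lemma~\ref{trapezoid contraction}, with the mixed cases hinging on the same observation used in Step~3(3) of Lemma~\ref{lm1}, namely that a ball meeting a trapezoid must satisfy $B_{R(x)}(x)\subset\Nn_j^{k}$. The final bookkeeping giving $66D$ also matches the paper.

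One point worth tightening: you assert that producing an endpoint-fixing path homotopy between $\al_1|_{[t_1,t_2]}$ and $\al_2|_{[t_1',t_3']}$ is ``equivalent'' to contracting the triangular loop $L$ to $x_1$. For width bounds this equivalence is not free: a free contraction of $L$ with width $W$ does not directly yield a path homotopy of the two arcs with width $W$. The paper avoids this issue by an insert-and-contract construction: it first thickens $\al_1$ by $\tau_1\cup(-\tau_1)$ (width $12D$), then by $\tau_2\cup(-\tau_2)$ (width $12D$), and only then contracts the triangle $(-\tau_2)\cup(-\tau_1)\cup\al_1([t_1,t_2])$ as a subloop of the ambient curve (width $42D$), which is why the paper's accounting reads $12D+12D+42D=66D$ rather than the $6D+12D+42D$ you cite from Lemma~\ref{lm1}. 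Your later reference to a ``three-phase accounting'' suggests you have this mechanism in mind, but as written the opening reduction is too loose; spelling out the insertion steps is what actually controls the width.
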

\begin{proof}
The proof is similar to Lemma~\ref{lm1}.
Let us denote $\tau_1=\al_2([t_1',t_2'])$ and $\tau_2=\al_2([t_2',t_3'])$. Based on the construction of edges in $\Si$, $\length(\tau_1) \leq 6D$ and $\length(\tau_2) \leq 6D$. First, $\al_1$ is homotopic to $\al_1|_{[0,t_1]}\cup \tau_1\cup (-\tau_1)\cup \al_1|_{[t1,1]}$ through a homotopy with width bounded by $2\cdot\length(\tau_1)\leq 12D$. (See Figure~\ref{fig7}(2).) We then homotope the curve to $\al_1|_{[0,t_1]}\cup \tau_1 \cup \tau_2 \cup (-\tau_2) \cup (-\tau_1)\cup \al_1|_{[t1,1]}$ by a homotopy with width bounded by $2\cdot\length(\tau_2)\leq 12D$. (See Figure~\ref{fig8}(3).)

\begin{figure}[htbp]
\begin{minipage}[c]{0.47\textwidth}
\centering\includegraphics[width=7cm]{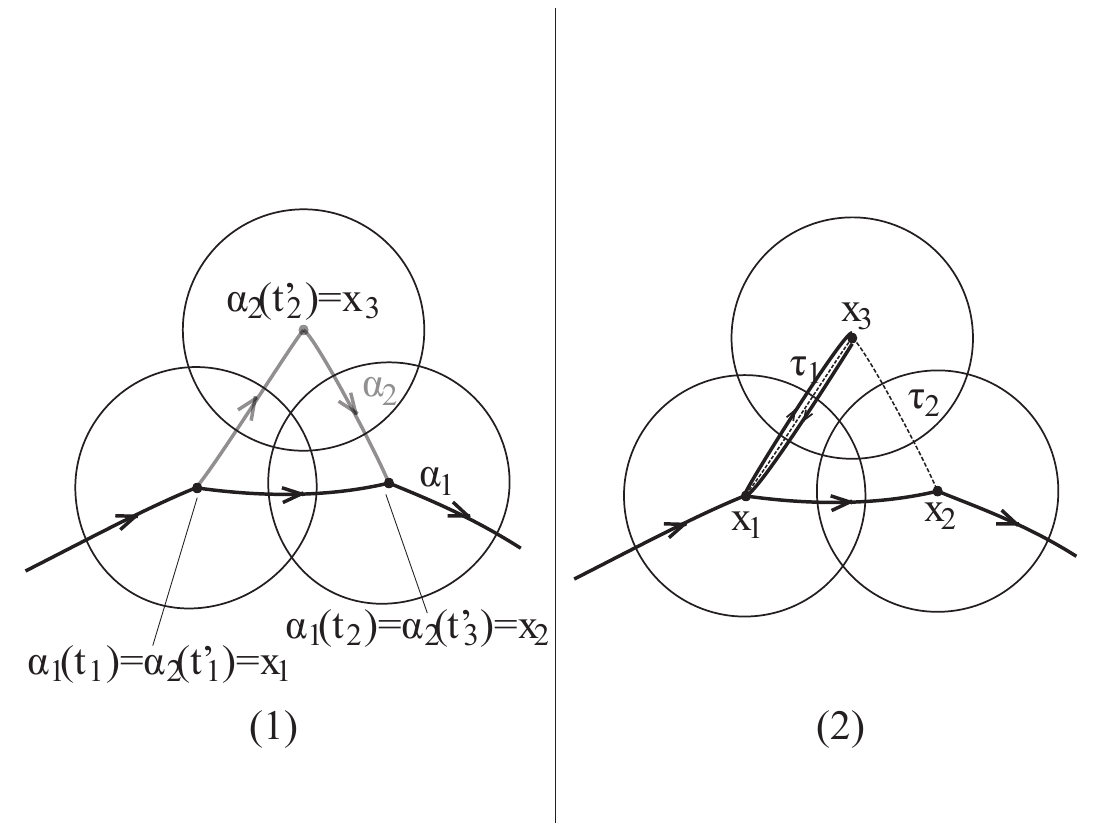}
\vspace*{6pt}
\caption{$\al_1$ is homotopic to $\al_1|_{[0,t_1]}\cup \tau_1\cup (-\tau_1)\cup \al_1|_{[t1,1]}$.}\label{fig7}
\end{minipage}
\begin{minipage}[c]{0.47\textwidth}
\centering\includegraphics[width=7cm]{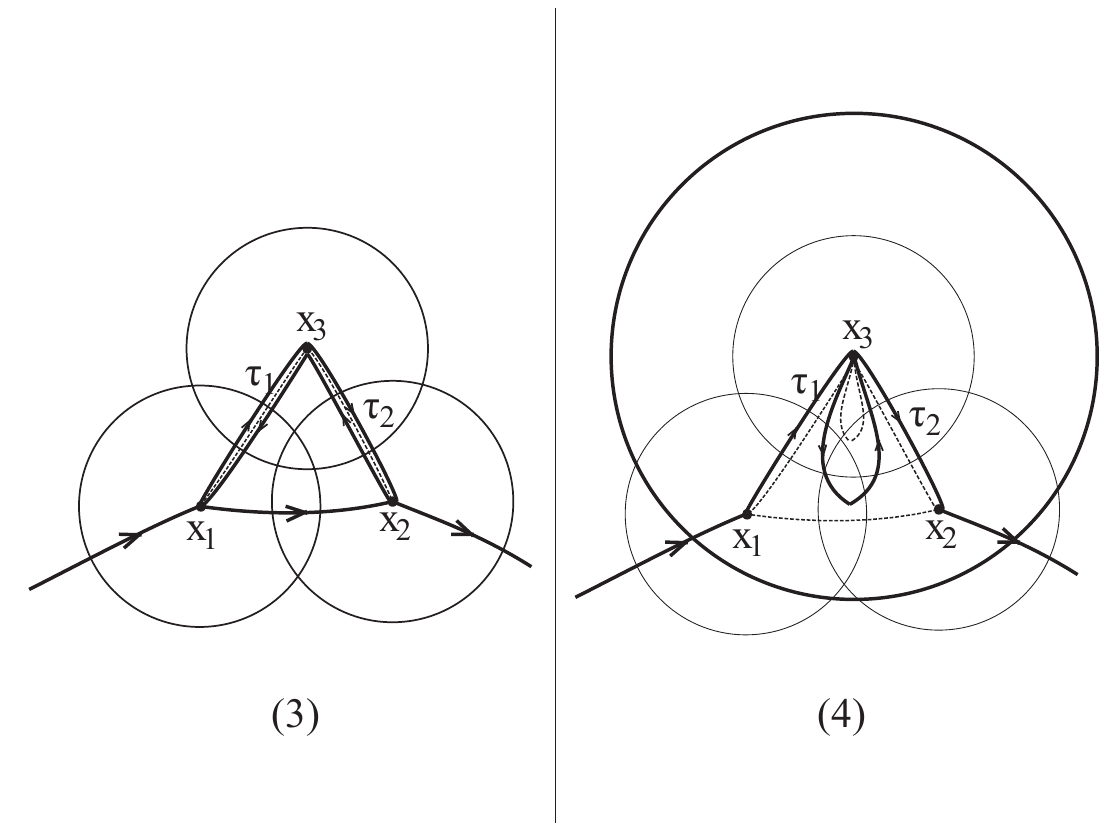}
\vspace*{6pt}
\caption{Contracting a triangle $(-\tau_2)\cup(-\tau_1)\cup \al_1([t_1,t_2])$}\label{fig8}
\end{minipage}
\end{figure}

At last, we show that the triangle $(-\tau_2)\cup(-\tau_1)\cup \al_1([t_1,t_2])$ can be contracted to a vertex with width bounded by $42D$ . Similar to the proof of Lemma~\ref{lm1}, there are several cases to be discussed.

\begin{enumerate}

\item If all three open sets are balls $B_r(x_i)(x_i)$, WLOG, suppose $B_{r(x_3)}(x_3)$ is the ball with the largest radius in $\{B_{r(x_i)}(x_i)\}$, $i=1,2,3$. In this case, we will contract the triangle to the vertex $x_3$. By our construction of the radius $r(x_i)$, we have $\cup_{i=1}^3 B_{r(x_i)}(x_i)\su B_{R(x_3)}(x_3)$, where $R(x_i)$ is the function in Lemma~\ref{lem1_c}. Now by Lemma~\ref{lem1_c}, any loop based at $x_3$ in $B_{R(x_3)}(x_3)$ can be contracted to the base point $x_3$ through a homotopy with width bounded by $D$. (See Figure~\ref{fig8}(4).)

\item If all three open sets are trapezoids $T^k_{j,i}$, and $x_2$ is the center of $T^k_{j,2}$, then $\cup_{i=1}^3 T^k_{j,i} \su \bar{T}^k_{j,2}$. By the construction of $\Si$, $(-\tau_2)\cup(-\tau_1)\cup \al_1([t_1,t_2])$ is contained in $\bar{T}^k_{j,2}$. By Lemma~\ref{trapezoid contraction} (3), we can contract $(-\tau_2)\cup(-\tau_1)\cup \al_1([t_1,t_2])$ to $x_2$ with width bounded by $21D$.

\item If one of the three open sets is a trapezoid $T^k_{j,i}$, we assume, WLOG, that $x_1\in T_{k,j,1}$, and $r(x_3)\geq r(x_2)$. With the same argument as in the case (1) above and the case (3) in the proof of Lemma~\ref{lm1}, $B_{r(x_2)}\su B_{R(x_3)}(x_3)$ and $B_{R(x_3)}(x_3)\su \Nn_{j}^k$. With the same construction in Lemma~\ref{lm1} (3) (also Lemma~\ref{trapezoid contraction}), we can contract the triangle $(-\tau_2)\cup(-\tau_1)\cup \al_1([t_1,t_2])$ to $x_3$ through a homotopy with width bounded by $42D$.

\item If two of the three open sets are trapezoids $T^k_{j,i}$, WLOG, say $x_1\in T_{k,j,1}$, $x_2\in T_{k,j,2}$ and $x_3\in B_{r(x_3)}(x_3)$. Note that in this case we still have $B_{R(x_3)}(x_3)\su \Nn_{j}^k$. And with the same argument in Lemma~\ref{lm1} (2) and (3), one can contract $(-\tau_2)\cup(-\tau_1)\cup \al_1([t_1,t_2])$ to a point using a homotopy with width bounded by $34D$.
\end{enumerate}

If we take all previous steps into account, then $\al_1$ is homotopic to $\al_2$ through a homotopy $H$ with width $\om_H\leq 12D+12D+42D \leq 66D$.
\end{proof}

Our next step is to show some general results about the simplicial approximation of a loop $\al\su \Si$ in the nerve of the covering $\Nn(M)$, which will be used to obtain an upper bound in Theorem~\ref{thm2_w}A. Recall that the nerve $\Nn(M)$ of the open covering $\mathcal{O}=\cup_{i=1}^{\ti{N}}\mathcal{O}_i$ of $M$ is a simplicial complex where the vertices corresponds to the open sets $\{\mathcal{O}_i\}$, and the $n-$simplicies corresponds to the non-empty intersections $\{\cap_{k=1}^{n} \mathcal{O}_k\}$.

\begin{lemma}\label{lm4}
For any loop $\al\su \Si$, there is a simplicial approximation $S$ of $\al$ such that $S$ is a simplicial 1-chain in the nerve $\Nn(M)$ and the number of 1-simplices in $S$ is $m(\al)$.
\end{lemma}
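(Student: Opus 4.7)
The plan is to exploit the essentially tautological correspondence between the graph $\Si$ and the $1$-skeleton of the nerve $\Nn(M)$. By construction, each vertex of $\Si$ is the center of exactly one open set in the covering $\mathcal{O}$, so there is a natural bijection $\phi$ between the vertex set of $\Si$ and the vertex set of $\Nn(M)$ (sending a vertex $x \in \Si$ to the vertex of $\Nn(M)$ representing the open set $\mathcal{O}_x \in \mathcal{O}$ whose center is $x$).

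Next I would verify that this bijection extends to edges. By the three-step construction of $\Si$ at the beginning of Section~\ref{sec3}, an edge of $\Si$ between vertices $x$ and $y$ is introduced precisely when $\mathcal{O}_x \cap \mathcal{O}_y \neq \emptyset$. By the definition of the nerve, this is also exactly the condition under which $\{\phi(x),\phi(y)\}$ spans a $1$-simplex in $\Nn(M)$. Thus each edge $e$ of $\Si$ determines a unique $1$-simplex $\phi(e)$ in $\Nn(M)$.

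Given a loop $\al$ in $\Si$, Definition~\ref{def4_s} expresses $\al$ as the concatenation of $m(\al)$ edges $e_1, \ldots, e_{m(\al)}$ along the subdivision $0 < t_1 < \cdots < t_{m(\al)-1} < 1$. I would define the simplicial approximation $S$ to be the formal $1$-chain
\begin{equation*}
S = \sum_{i=1}^{m(\al)} \phi(e_i)
\end{equation*}
in $\Nn(M)$, where each $\phi(e_i)$ is oriented in the same way as $e_i$ inherits its orientation from $\al$. This is manifestly a simplicial $1$-chain in $\Nn(M)$, and by construction it contains exactly $m(\al)$ $1$-simplices.

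There is no real obstacle here: the content of the lemma is only that the construction of $\Si$ in Section~\ref{sec3} was designed so that $\Si$ maps naturally onto the $1$-skeleton of $\Nn(M)$ without creating or destroying edges. The remark that $S$ is a \emph{chain} rather than a path reflects the fact that the author wants to record only the combinatorial data of which $1$-simplices are traversed (with multiplicity and orientation), which is exactly what the edge-by-edge translation above produces.
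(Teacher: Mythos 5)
Your construction of $S$ is correct and produces the same chain the paper does, but you take a different route: you argue purely combinatorially, exhibiting a bijection $\phi$ between the vertices of $\Si$ and the vertices of $\Nn(M)$ and observing that, by the three-step construction of $\Si$, edges of $\Si$ are in bijection with $1$-simplices of $\Nn(M)$ (intersecting pairs of sets in $\mathcal{O}$). That is clean and sufficient for the literal statement. The paper instead routes the argument through the natural map $f: M \to \Nn(M)$ obtained from a partition of unity subordinate to $\mathcal{O}$: for each edge $\tau_i$ of $\al$ joining the centers of $\mathcal{O}_k$ and $\mathcal{O}_{k'}$, they note $\tau_i \subset \mathcal{O}_k \cup \mathcal{O}_{k'}$, so $f(\tau_i)$ lands in a single simplex $\De$ of $\Nn(M)$ whose edge $\overline{f(\al(t_i))\,f(\al(t_{i+1}))}$ is the one you call $\phi(e_i)$, and by contractibility of $\De$ the two are homotopic. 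They then sum these $1$-simplices to get $S$.

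The point of the paper's detour through $f$ is that it records, for free, that $S$ is homotopic to $f\circ\al$ in $\Nn(M)$, i.e., that $S$ really is a simplicial \emph{approximation} of $\al$ and not just a formal chain with the right edge count. This homotopy relation is what gets used in the proof of Theorem~\ref{thm2_w}A, where from $\al_i$ being contractible in $M$ one deduces that the $1$-chain $S_i$ is contractible in $\Nn(M)$ (so that Lemma~\ref{lm5} applies). Your proof produces the correct $S$, but as written it does not relate $\al$ and $S$ by any continuous map, so the "approximation" content is left implicit. If you want your argument to carry the weight the paper puts on this lemma downstream, you should add the observation that your simplicial map $\phi$ agrees, up to straight-line homotopy inside each simplex, with the partition-of-unity map $f$ restricted to $\Si$, so that $S=\phi_*(\al)$ is homotopic to $f\circ\al$.
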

\begin{proof}
Let $\al:[0,1]\ra \Si$ be a loop. As before, we choose a partition $\{t_i\}_{i=1}^{m(\al)}$ of $[0,1]$ such that $\al(t_i)$ is a vertex in $\Si$ and $\tau_i:=\al([t_i,t_{i+1}])$ is an edge in $\Si$ for any $i$.

Let $f:M\ra \Nn(M)$ be the natural map obtained by using a partition of unity subordinate to the covering $\{\mathcal{O}_i\}$. Based on our construction , if $\tau_i$ connects the centers of two open sets $\mathcal{O}_k$ and $\mathcal{O}_{k'}$, then $\tau_i \su \mathcal{O}_k \cup \mathcal{O}_{k'}$, the image $f(\tau_i)$ is contained in a simplex $\De$ and the 1-simplex $\overline{f(\al(t_i))f(\al(t_{i+1}))}$ is an edge of $\De$. Because $\De$ is contractible, $\tau_i$ and $\overline{f(\al(t_i))f(\al(t_{i+1}))}$ are homotopic. We apply this simplicial approximation for all $i$, where $1\leq i\leq m(\al)$. Then we will obtain a simplicial 1-chain $S$ by taking the sum of all $\overline{f(\al(t_i))f(\al(t_{i+1}))}$. The number of 1-simplices in $S$ is $m(\al)$.
\end{proof}

\begin{lemma}\label{lm5}
Let $\Nn(M)$ be the nerve of the covering $\mathcal{O}$ of $M$. Let $S\su \Nn(M)$ be a simplicial complex with $m$ simplices and $\ga\su S$ a closed simplicial curve in $S$. Suppose $\ga$ is contractible through a simplicial homotopy $H$ in $S$ and the number of the 1-simplex in $\ga$ is $l$, counting with multiplicity, then there is an increasing function $F(m,l)$ such that the image of the simplicial homotopy $H$ consists of no more than $F(m,l)$ 2-simplices.
\end{lemma}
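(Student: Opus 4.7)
The plan is to establish the existence of $F(m,l)$ via a combinatorial finiteness argument. A simplicial homotopy contracting $\gamma$ is the same data as a simplicial map $H\colon D^2_\triangle\to S$ from a triangulated 2-disk $D^2_\triangle$ to $S$ with $H|_{\partial D^2_\triangle}=\gamma$, and the number of 2-simplices in the image of $H$, counted with multiplicities from the triangulation of $D^2_\triangle$, is exactly the number of 2-simplices of $D^2_\triangle$. The goal thus becomes: among all simplicial fillings of $\gamma$ in $S$, exhibit one whose domain has area bounded by some function of $m$ and $l$.

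For each pair $(S,\gamma)$ with $S$ a finite simplicial 2-complex of at most $m$ simplices and $\gamma\subset S$ a contractible simplicial loop of length at most $l$, define $a(\gamma,S)$ to be the smallest integer $n$ such that there exists a simplicial map $H\colon D^2_\triangle\to S$ whose domain $D^2_\triangle$ has $n$ 2-simplices and $H|_{\partial}=\gamma$. Contractibility of $\gamma$ (provided by the hypothesised $H$) guarantees that this set of integers is nonempty, and being a nonempty subset of $\N$ it attains a finite minimum. Moreover, up to simplicial isomorphism there are only finitely many 2-complexes with at most $m$ simplices, and inside any fixed such complex only finitely many closed simplicial curves of length at most $l$, each one being specified by an ordered sequence of at most $l$ oriented 1-simplices of $S$.

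Setting
$$F(m,l):=\max\bigl\{\,a(\gamma,S)\,:\,|S|\leq m,\ \gamma\subset S\ \text{a contractible simplicial loop of length}\leq l\,\bigr\}$$
therefore yields a finite integer, with monotonicity in both arguments automatic since enlarging $m$ or $l$ only adds pairs $(S,\gamma)$ to the index set without removing any. The only point that requires care is to confirm that a minimum-area simplicial filling exists for each fixed $(S,\gamma)$; this follows immediately from the well-ordering of $\N$ once nonemptiness is invoked. I therefore do not anticipate a serious obstacle. For a more constructive bound one could instead reduce a given $H$ by iterated cancellation of adjacent 2-simplices of $D^2_\triangle$ that map to the same 2-simplex of $S$ with opposite orientations (in the spirit of van Kampen diagram reductions), and then control the area of a reduced filling in terms of the Dehn function of $S$; but this explicit route is not needed for the existence statement here.
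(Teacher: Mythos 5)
Your argument is correct and is essentially the same as the paper's: the paper also proves the lemma by observing that there are only finitely many pairs $(S,\gamma)$ with the given size bounds, each admitting a minimum-area simplicial filling, and defines $F(m,l)$ as the maximum of these minima. The extra detail you supply about modelling the homotopy as a simplicial map from a triangulated disk and invoking well-ordering of $\N$ only makes explicit what the paper leaves implicit.
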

\begin{proof}
The argument is essentially the same with the proof of \cite[Lemma~3.5(b)]{rotman2000upper}. We include the proof for the sake of completeness. For every positive integer $m$ and $l$, there are only finitely many simplicial complexes $S$ with no more than $m$ simplices in $\Nn(M)$ and finitely many contractible closed simplicial curves $\ga$ in $S$ with no more than $l$ many $1-$simplices. By taking the maximum over all pairs $\ga$ and $S$ of the number of 2-simplices in the optimal homotopy contracting $\ga$ in $S$, we obtain an increasing function of $m$ and $l$.
\end{proof}

The proof of Theorem~\ref{thm2_w}B will be based on a slightly different construction than the proof of Theorem~\ref{thm2_w}A. In order to obtain an explicit estimate, we are going to extend the graph $\Si$ so that it captures more geometric information of the manifold.

More specifically, we define a graph $\Ga$ whose vertices are still the centers of the open sets in $\mathcal{O}$. For each body $\Bb^k_i$, if $\{B_{r(x_j)}(x_j)\}$ are all the open balls in $\mathcal{O}$ that cover $\Bb^k_i$, then we connect any two distinct vertices $x_l$ and $x_j$ in this covering by a minimizing geodesic and we define it to be an edge in $\Ga$. Note that if there are more than one geodesics connecting $x_l$ and $x_j$, we just pick one of them. The edges in $\Ga$ connecting vertices in $T^k_{j,i}$ or $T^k_{j,i}$ and $B_{r(x_l)}(x_l)$, when their intersection is non-empty, remain the same as the graph $\Si$.

\begin{remark}
Note that $\Si\su\Ga$ is a subgraph. Let us extend Definition~\ref{def4_s} to the graph $\Ga$. Then the conclusion of Lemma~\ref{lm2} is still true if we replace $\Si$ by $\Ga$. (See Lemma~\ref{lm7}.)
\end{remark}

The key idea in the proof of Theorem~\ref{thm2_w}B is that we would like to approximate some ``good'' curve, i.e, minimizing geodesics, in $M$ by a curve in $\Ga$ with controlled simplicial length. To see this, we are going to first partite a minimizing geodesic into bodies and necks with controlled number of pieces. We will then show, respectively in Lemma~\ref{lm6} and Lemma~\ref{approx in neck}, that how to estimate the simplicial length of the approximation of a minimizing geodesic in body and neck. These estimation results are combined in Lemma~\ref{approx minimizing geodesic}.

\begin{definition}
Let $B_{r}(x)$ be an open metric balls in $M$ and $\overline{B_{r}(x)}$ its closure. Let $\ga:[0,1] \ra M$ be a curve in $M$. We define \textsl{the first intersection point} of $\ga$ with $\overline{B_{r}(x)}$ to be a point $\ga(a)$ with
\begin{equation}
a=\{\max_{t \in [0,1]} t | \ga([0,t)) \su M \setminus \overline{B_{r}(x)} \}.
\end{equation}
Note that if $\ga(0) \in M \setminus \overline{B_{r}(x)}$, then the first intersection point is on the boundary of $B_{r}(x)$. If $\ga(0) \in \overline{B_{r}(x)}$, then we define the first intersection point to be $\ga(0)$. Similarly, we define \textsl{the last intersection point} of $\ga$ with $\overline{B_{r}(x)}$ to be $\ga(b)$ with
\begin{equation}
b=\{\min_{t \in [0,1]} t | \ga((t,1]) \su M \setminus \overline{B_{r}(x)} \}.
\end{equation}
Similarly, if $\ga(1) \in M \setminus \overline{B_{r}(x)}$, then the last intersection point is on the boundary of $B_{r}(x)$. If $\ga(1) \in \overline{B_{r}(x)}$, then we define the last intersection point to be $\ga(1)$. Moreover, it follows from the definition that if the first intersection point exists, then the last intersection point exists and we have $0 \leq a \leq b \leq 1$.
\end{definition}

Follows from this definition, we have:

\begin{lemma}\label{intersection point}
Let $B_{r_1}(x)$ and $B_{r_2}(x)$ be two open metric balls in $M$ with $r_2 \geq 2r_1$. Suppose that $\ga: [0,1] \ra M$ is a minimizing geodesic. If $\ga(a)$ and $\ga(b)$ are the first and the last intersection points of $\ga$ with $\overline{B_{r_1}(x)}$ respectively, then $\ga([a,b]) \su B_{r_2}(x)$.
\end{lemma}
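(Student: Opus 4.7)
The proof is essentially a triangle-inequality argument exploiting the fact that any subarc of a minimizing geodesic is still minimizing. The plan is to pass from the bounds on $d(x,\ga(a))$ and $d(x,\ga(b))$ to a bound on $d(\ga(a),\ga(b))$, then use minimality on $[a,b]$ to locate $\ga(t)$ near one of the two endpoints, and finally bound $d(x,\ga(t))$ by a second triangle inequality.

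First I would observe that by the definition of the first and last intersection points, $\ga(a),\ga(b)\in\overline{B_{r_1}(x)}$, hence $d(x,\ga(a))\le r_1$ and $d(x,\ga(b))\le r_1$. The triangle inequality then yields $d(\ga(a),\ga(b))\le 2r_1$.

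Next, since $\ga$ is a minimizing geodesic on $[0,1]$, its restriction $\ga|_{[a,b]}$ is still minimizing between its endpoints, and for every $t\in[a,b]$,
$$d(\ga(a),\ga(t))+d(\ga(t),\ga(b))=d(\ga(a),\ga(b))\le 2r_1.$$
In particular, at least one of the summands is $\le r_1$; without loss of generality, $d(\ga(a),\ga(t))\le r_1$.

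Finally, a second application of the triangle inequality gives
$$d(x,\ga(t))\le d(x,\ga(a))+d(\ga(a),\ga(t))\le r_1+r_1=2r_1\le r_2,$$
which places $\ga(t)$ in the closed ball of radius $r_2$. No step is truly difficult; the only mildly delicate point is the equality case $r_2=2r_1$, which only yields $\ga(t)\in\overline{B_{r_2}(x)}$. This is either absorbed by enlarging $r_2$ slightly or simply means the statement should be read with the closed ball (as is natural in the subsequent applications).
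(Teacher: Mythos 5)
Your argument is correct and is in spirit the same as the paper's: both hinge on $d(\ga(a),\ga(b))\le 2r_1$, the fact that $\ga|_{[a,b]}$ realizes this distance, and a triangle inequality. The paper phrases it as a contradiction (a point $y$ on the arc outside $B_{r_2}(x)$ would force $\length(\ga([a,b]))>2r_1$), whereas you argue directly by splitting $d(\ga(a),\ga(b))$ at $\ga(t)$; the direct route is slightly cleaner and makes the estimate pointwise. Your remark about the equality case $r_2=2r_1$ is a genuine observation and not merely cosmetic: your argument yields only $\ga(t)\in\overline{B_{r_2}(x)}$, and the paper's own proof has the identical gap, since the asserted strict inequality $\length(\ga([a,b]))>2r_1$ is not forced when $d(x,\ga(a))=d(x,\ga(b))=r_1$, $d(x,y)=r_2=2r_1$, and the intermediate triangle inequalities are all tight. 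In the lemma's sole application (Lemma~\ref{partition of geodesic}), it is invoked precisely with $r_2=2r_1$ and the conclusion used is containment in the \emph{closed} annulus $\overline{A_{\bar{r}_j^k,2r_j^k}(x_j^k)}$, so reading the conclusion with the closed ball $\overline{B_{r_2}(x)}$, as you suggest, is exactly what is needed and what both proofs actually deliver.
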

\begin{proof}
If $\ga(a)=\ga(b)$, then the statement is true trivially. If $\ga(a) \not=\ga(b)$, and suppose that $\ga([a,b]) \not\su B_{r_2}(x)$, then there is a point $y \in \ga([a,b])$ such that $y \in M \setminus B_{r_2}(x)$. Hence $\length(\ga([a,b])) >2 r_1$. However, since $\ga(a),\ga(b) \in \overline{B_{r_1}(x)}$, the distance between them $d(\ga(a),\ga(b)) \leq 2 r_1<\length(\ga([a,b])) $. This contradicts to that $\ga$ is minimizing.
\end{proof}

Now let us introduce a partition of a minimizing geodesic in $M$.

\begin{lemma}[Partition Lemma] \label{partition of geodesic}
Let $M\in\Mm(4,v,D)$. Suppose that $M$ has a ``bubble tree'' decomposition as in Theorem~\ref{thm3_f}. Let $\ga: [0,1] \ra M$ be a minimizing geodesic. Then there is a partition of $\ga$ into at most $4\ti{N}(v,D)+1$ geodesic segments such that each segment is either in a body or in $\overline{A_{\bar{r}_j^k, 2r_j^k} (x_j^k)}\subset \Nn_j^{k+1}$ for some neck $\Nn_j^{k+1}$. Moreover, there are at most $2$ geodesic segments in each $\overline{A_{\bar{r}_j^k, 2r_j^k} (x_j^k)}$.
\end{lemma}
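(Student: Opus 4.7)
The strategy is to cut $\ga$ at the ``first entry'' and ``last exit'' times associated with the finitely many distinguished balls of the bubble tree decomposition of Theorem~\ref{thm3_f}, use Lemma~\ref{intersection point} to locate each resulting piece in a body or an annulus, and exploit the minimizing property of $\ga$ to bound the number of pieces lying in any single annulus.

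First, for each neck $\Nn_j^{k+1}$, I would apply Lemma~\ref{intersection point} twice: once with $r_1=r_j^k$, $r_2=2r_j^k$, and once with $r_1=\bar r_j^k$, $r_2=2\bar r_j^k$. The hypotheses are automatic from $2r_j^k\leq D$ and $\bar r_j^k\leq r_j^k$. This yields, for each neck at which $\ga$ actually enters the corresponding balls, two nested subintervals $I_{j,k}^{\mathrm{in}}\su I_{j,k}^{\mathrm{out}}\su[0,1]$ on which $\ga$ is trapped in $B_{2\bar r_j^k}(x_j^k)$ and $B_{2r_j^k}(x_j^k)$ respectively. Sorting the endpoints of all these intervals together with $\{0,1\}$ produces a partition of $[0,1]$. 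On each subinterval the location of $\ga$ in the bubble tree is read off from the distance $d(\ga,x_j^k)$: values $<\bar r_j^k$ place $\ga$ inside $B_{\bar r_j^k}(x_j^k)$, which is covered by a deeper body or sub-neck; values $>2r_j^k$ place $\ga$ outside the outer sphere of $\Nn_j^{k+1}$ and hence in a shallower body; and values in $[\bar r_j^k,2r_j^k]$ place $\ga$ in the annulus $\overline{A_{\bar r_j^k,2r_j^k}(x_j^k)}\su\Nn_j^{k+1}$, by \eqref{equ4}. Iterating this across the levels of the bubble tree assigns every subinterval unambiguously to a single body or to a single annulus.

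The main obstacle is the second assertion, that each annulus $\overline{A_{\bar r_j^k,2r_j^k}(x_j^k)}$ hosts at most two pieces; here the minimizing property of $\ga$ is essential. The diffeomorphism $\Phi_j^{k+1}$ of \eqref{equ4.1} identifies the neck with the flat annulus $A_{\bar r_j^k/2,2r_j^k}(0)\su\R^4/\Ga_j^{k+1}$ equipped with a metric that is $C^0$-close to the Euclidean cone metric, within $\ep(v)<0.1$, by \eqref{equ5}. Each maximal subarc of $\ga$ lying in $\Nn_j^{k+1}$ is minimizing in $M$ between its endpoints, and therefore near-minimizing in $(A_{\bar r_j^k/2,2r_j^k}(0),(\Phi_j^{k+1})^{*}g)$; a standard comparison argument forces such a curve to stay uniformly close to a Euclidean straight line. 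Since a straight line in $\R^4$ meets a round annulus in at most two connected components, the same bound survives the small perturbation. Equivalently, a third annulus piece would force $\ga$ to backtrack across the inner sphere $S_{\bar r_j^k}(x_j^k)$ after its last exit from $\overline{B_{\bar r_j^k}(x_j^k)}$, contradicting the defining property of $I_{j,k}^{\mathrm{in}}$ as containing all visits to that ball.

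For the counting, Lemma~\ref{lem2_c} bounds the total number of open sets in the covering $\mathcal{O}$ by $\ti N(v,D)$, and the necks are in bijection with a subset of these open sets, so the number of annuli is at most $\ti N(v,D)$. The previous step then gives at most $2\ti N(v,D)$ annulus segments. Since consecutive body segments of the partition must be separated by at least one annulus segment (they would otherwise collapse into a single body segment), the body segments number at most one more than the annulus segments. The total is therefore bounded by $4\ti N(v,D)+1$, as claimed.
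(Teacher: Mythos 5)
Your overall strategy is the same as the paper's: cut $\ga$ at the first and last intersection points with the concentric balls $\overline{B_{\bar r_j^k}(x_j^k)}$ and $\overline{B_{r_j^k}(x_j^k)}$ for each neck, invoke Lemma~\ref{intersection point} to localize the resulting segments, and count $4$ points per neck against $\ti N(v,D)$ necks. The partition you construct is identical to the one in the paper.

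Where you go off track is the justification of the ``at most $2$ segments per annulus'' assertion. Your primary argument --- that a minimizing subarc of $\ga$ inside the neck is near-minimizing for the pulled-back metric $(\Phi_j^{k+1})^*g$ and therefore ``stays uniformly close to a Euclidean straight line'' --- does not hold with the control that is actually available. The bound \eqref{equ5} gives $\|g_{ij}-\de_{ij}\|_{C^0}+\bar r_j^k\cdot\|\dl g_{ij}\|_{C^0}\leq\ep<0.1$, i.e.\ the metric is only $C^0$-close with $\ep$ a \emph{fixed} (not arbitrarily small) constant, and the first-derivative bound is of size $\ep/\bar r_j^k$, which is useless at the scale $r_j^k$ of the annulus because the ratio $r_j^k/\bar r_j^k$ is unbounded (this is exactly Remark~\ref{rm1}). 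A geodesic of a metric that is merely $0.1$-close in $C^0$ to the flat cone metric can wind around a long thin annulus and is not well-approximated by a straight line; nothing forces it to meet the annulus in at most two components.

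In fact no comparison argument is needed: the ``at most $2$'' is immediate from the construction. For a given neck, the cut times coming from that neck are $a_1\leq a_2\leq b_2\leq b_1$ (first/last intersection with the outer ball $\overline{B_{r_j^k}}$, then with the inner ball $\overline{B_{\bar r_j^k}}$); the segments that are \emph{assigned} to the annulus $\overline{A_{\bar r_j^k,2r_j^k}(x_j^k)}$ are exactly $\ga([a_1,a_2])$ and $\ga([b_2,b_1])$, while $\ga([a_2,b_2])$ is assigned to $B_{2\bar r_j^k}(x_j^k)$, hence to the next-level body and its sub-bubbles, and $\ga([0,a_1])$, $\ga([b_1,1])$ are outside $\overline{B_{r_j^k}}$, hence in the parent body. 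Moreover, these two annulus intervals are not further subdivided by cut points of \emph{other} necks: every cut point lies on a sphere $\partial B_{\bar r_{j'}^{k'}}(x_{j'}^{k'})$ or $\partial B_{r_{j'}^{k'}}(x_{j'}^{k'})$ contained in its own neck $\Nn_{j'}^{k'+1}$, and distinct necks are pairwise disjoint, whereas $\ga([a_1,a_2])$ and $\ga([b_2,b_1])$ sit inside $\Nn_j^{k+1}$. Your secondary ``backtracking'' remark is gesturing at this disjointness observation, but you should state it directly instead of appealing to a straight-line picture that the available estimates do not support.

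Your final counting --- $\ti N$ necks, hence at most $2\ti N$ annulus segments and at most one more body segment than annulus segment, giving $4\ti N+1$ --- is fine and matches the paper's simpler count of at most $4$ cut points per neck.
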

\begin{proof}
Recall that in equations (\ref{equ3}) and (\ref{equ4}), for each neck $\Nn_j^{k+1}$, we have
\begin{equation}
A_{\bar{r}_j^k /2, 2 r_j^k} (x_j^k)   \subset \Nn_j^{k+1} \subset A_{(1-\ep)\bar{r}_j^k /2, 2(1+\ep) r_j^k} (x_j^k), \nonumber
\end{equation}
and for each body $\Bb_i^{k}$,  we have
\begin{equation}
\Bb_i^{k}=B_{2\bar{r}_i^{k-1}}(x_i^{k-1})\setminus \cup_j B_{r_j^{k}}(x_j^{k}). \nonumber
\end{equation}
In each neck $\Nn_j^{k+1}$, we can choose the first and the last intersection points of $\ga$  with $\overline{B_{r_j^k}(x_j^k)}$ and $\overline{B_{\bar{r}_j^k}(x_j^k)}$ respectively. Suppose that those points are $\{\ga(t_i)\}_{i=1}^K$ and $0\leq t_1 \leq t_1 \leq, \ldots, \leq t_K \leq 1$, then $\{\ga([t_i,t_{i+1}])\}_{i=0}^{K}$ with $t_0=0$ and $t_{K+1}=1$ form the partiton of $\ga$. Each segment $\ga([t_i,t_{i+1}])$ is either in a body or in the closure of $\overline{A_{\bar{r}_j^k, 2r_j^k} (x_j^k)}$ follows from Lemma~\ref{intersection point} our definition of the first and last intersection point. Note that by Lemma~\ref{lem2_c}, the number of necks is bound above by $\ti{N}(v,D)$ and there are at most $4$ intersection points in each neck. Hence, we partite $\ga$ into at most $4\ti{N}(v,D)+1$ pieces.
\end{proof}

We first show that any minimizing geodesic in a body region can be approximate by a curve in $\Ga$ with controlled simplicial length. For a body $\Bb^k_i$, let us denote
$$r(\Bb^k_i)=\inf\{r_h(x)|x \in \Bb^k_i\}.$$

\begin{lemma}\label{lm6}
 Let $\ga:[0,1]\ra M$ be a curve in $\Bb^k_i$ with length $l$. Then there exists a curve $\al \su \Ga$ with simplicial length bounded by $64 l/r(\Bb^k_i)$ such that $\ga$ is homotopic to $\al$ through a homotopy with width bounded by $9D$. In particular, if $\ga$ is a minimizing geodesic of $M$, then the simplicial length of $\al$ does not exceed $64/r_0(v,D)$, where $r_0(v,D)$ is defined in (1) of Theorem \ref{thm3_f}.
\end{lemma}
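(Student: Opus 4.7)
My plan is to partition $\ga$ into short pieces, replace each piece by an edge in $\Ga$ with short connecting geodesics at the endpoints, and run a three-step homotopy (analogous to the proof of Lemma~\ref{lm1}), localized inside contractible harmonic balls.

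Concretely, I would choose a partition $0 = s_0 < s_1 < \cdots < s_N = 1$ of $[0,1]$ with $\length(\ga|_{[s_m, s_{m+1}]}) \leq r(\Bb_i^k)/64$ for each $m$, so that $N \leq 64\, l/r(\Bb_i^k)$. For each partition point $\ga(s_m)$, since $\ga(s_m) \in \Bb_i^k$, it belongs to some harmonic ball $B_{r(x_{j_m})}(x_{j_m})$ from $\mathcal{O}$; let $\si_m$ be the minimizing geodesic in $M$ from $\ga(s_m)$ to $x_{j_m}$, so $\length(\si_m) \leq r(x_{j_m})$. By the construction of $\Ga$ inside a body, the centers $x_{j_m}$ and $x_{j_{m+1}}$ are joined by an edge $\ti{\ga}_m$, which is a minimizing geodesic in $M$. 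Define $\al = \cup_m \ti{\ga}_m$; this is a simplicial curve in $\Ga$ of simplicial length $N \leq 64\, l/r(\Bb_i^k)$.

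The key geometric claim is that, without loss of generality assuming $r(x_{j_m}) \leq r(x_{j_{m+1}})$, the 4-gon
\[
\ga([s_m, s_{m+1}]) \cup \si_{m+1} \cup (-\ti{\ga}_m) \cup (-\si_m)
\]
is contained in the contractible ball $B_{R(x_{j_{m+1}})}(x_{j_{m+1}})$ from Lemma~\ref{lem1_c}, where $R(x) = 4 r(x)$. I would verify this via the triangle inequality: using $r(x_{j_{m+1}}) \geq r(\Bb_i^k)/(32\sqrt{1+2\cdot 10^{-3}}) > r(\Bb_i^k)/64$, each of the four sides lies within distance $R(x_{j_{m+1}}) = 4 r(x_{j_{m+1}})$ from $x_{j_{m+1}}$ (e.g.\ points on $\si_m$ are at distance $\leq 3 r(x_{j_{m+1}}) + r(\Bb_i^k)/64 \leq 4 r(x_{j_{m+1}})$). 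With this containment, I would apply the three-step scheme from Lemma~\ref{lm1}: (i) insert $\si_m \cup (-\si_m)$ at each $\ga(s_m)$; (ii) insert $\ti{\ga}_m \cup (-\ti{\ga}_m)$; (iii) contract each 4-gon to $x_{j_{m+1}}$ inside $B_{R(x_{j_{m+1}})}(x_{j_{m+1}})$ via Lemma~\ref{lem1_c}. Since $\si_m$ and $\ti{\ga}_m$ are minimizing geodesics in $M$ of length $\leq D$, and the contraction in Lemma~\ref{lem1_c} has width $\leq D$, summing over the steps yields $\om_H \leq 9D$.

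For the ``in particular'' assertion, if $\ga$ is a minimizing geodesic of $M$ whose image lies in $\Bb_i^k$, then $l = \length(\ga) = d_M(\ga(0),\ga(1)) \leq \diam(\Bb_i^k)$. Combining with Theorem~\ref{thm3_f}(1), which gives $r_h(x) \geq r_0(v, D) \cdot \diam(\Bb_i^k)$ for every $x \in \Bb_i^k$, we obtain $r(\Bb_i^k) \geq r_0(v, D) \cdot \diam(\Bb_i^k)$, so the simplicial length is at most $64\, l/r(\Bb_i^k) \leq 64/r_0(v, D)$. The main obstacle throughout is the 4-gon containment, which dictates the choice of the partition scale $r(\Bb_i^k)/64$; once that is secured, the homotopy construction and width estimate reduce to a direct application of the three-step argument from Lemma~\ref{lm1} together with Lemma~\ref{lem1_c}.
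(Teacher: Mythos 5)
Your argument matches the paper's proof in all essentials; the only variation is that you fix the partition scale at $r(\Bb_i^k)/64$ from the outset, while the paper chooses the partition adaptively (the arc starting at $\ga(t_i)$ is given length exactly $r(x_i)$), but both yield the same bound $64\,l/r(\Bb_i^k)$ on the number of segments and invoke the same three-step homotopy from Lemma~\ref{lm1} together with Lemma~\ref{lem1_c} to get width $9D$, and the ``in particular'' step is identical. One small caution: you cannot literally take $r(x_{j_m}) \leq r(x_{j_{m+1}})$ without loss of generality for every $m$ simultaneously; when the inequality is reversed the $4$-gon only lies in $B_{R(x_{j_m})}(x_{j_m})$, and contracting it to $x_{j_{m+1}}$ (the vertex that must survive to form $\al$) requires the $5D$ detour of Lemma~\ref{lm1}, Step~3(1), which is precisely how the total $2D+2D+5D=9D$ is obtained.
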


\begin{proof}
Suppose $\{B_{r(x_j)}(x_j)\}$ are the open sets in $\mathcal{O}$ that cover $\Bb^k_i$. Let us first choose a partition of the curve $\ga$ in the following way. We will take a partition $0=t_0<t_1<\dots<t_n=1$ of $[0,1]$ inductively. Let $\ga(0)=\ga(t_0)\in B_{r(x_0)}(x_0)$. If the length of $\ga\geq r(x_0)$, we choose $t_1$ such that $\length (\ga([t_0,t_1]))=r(x_0).$ Suppose we have already chosen $t_0,t_1,\dots,t_i$, and $\ga(t_i)\in B_{r(x_i)}(x_i)$ for some vertex $x_i$. If $\length(\ga([t_i,1]))\geq r(x_i)$, we choose $t_{i+1}$ such that $\length(\ga([t_i,t_{i+1}]))=r(x_i)$. (See Figure~\ref{fig9}.) Otherwise, we just choose $t_{i+1}=1$ and then $\ga([t_i,1])\su B_{r(x_i)}(x_i)$. Note that it is possible that $\ga(t_{i+1})\in B_{r(x_i)}(x_i)$. In this case we just let $x_{i+1}=x_i$ be the same point.

\begin{figure}[htbp]
\begin{minipage}[c]{0.47\textwidth}
\centering\includegraphics[width=7cm]{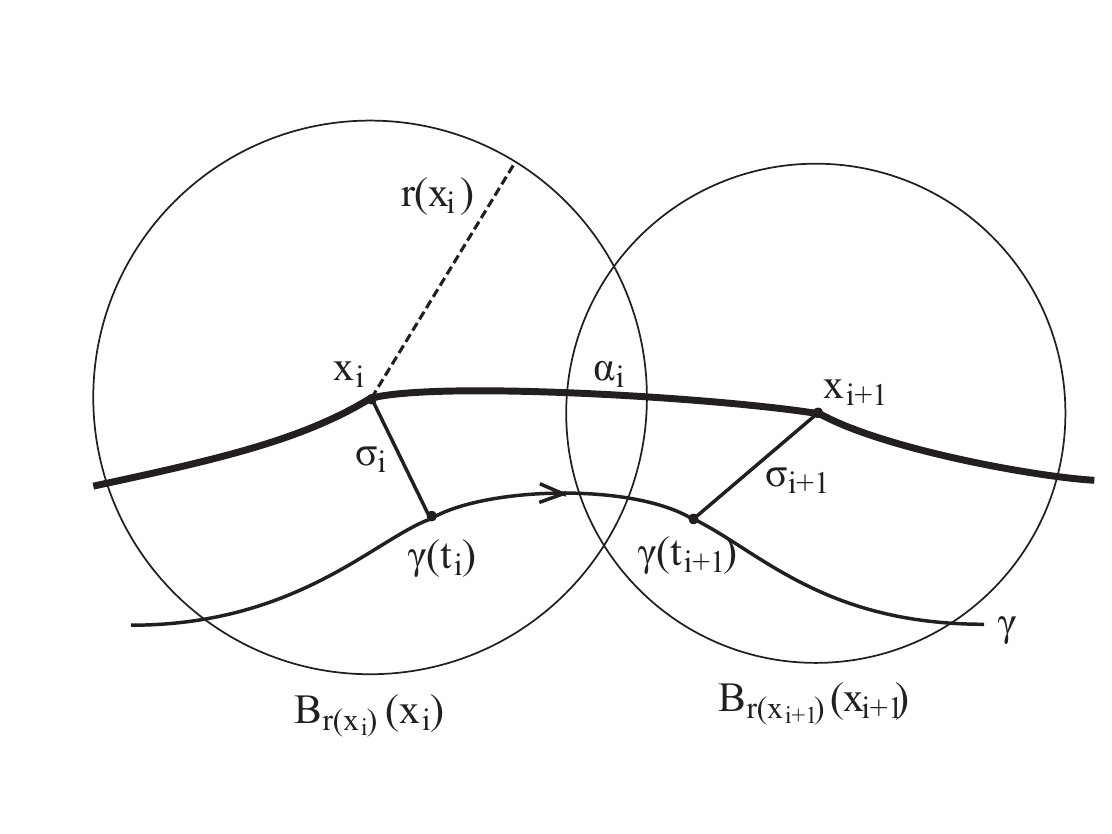}
\vspace*{6pt}
\caption{Pick $\ga(t_{i})$ so that $\length(\ga([t_i,t_{i+1}]))=r(x_i)$.}\label{fig9}
\end{minipage}
\end{figure}

For each arc $\ga([t_i,t_{i+1}])$, suppose that $\ga(t_i)\in B_{r(x_i)}(x_i)$ and $\ga(t_{i+1})\in B_{r(x_{i+1})}(x_{i+1})$. Let $\si_i$ (resp. $\si_{i+1}$) be a minimizing geodesic connecting $x_i$ (resp. $x_{i+1}$) and $\ga(t_i)$ (resp. $\ga(t_{i+1})$) and let $\al_i$ denote the edge in $\Ga$ connecting $x_i$ and $x_{i+1}$. If $x_i=x_{i+1}$, $\al_i$ is just a point curve.

We consider the four-gon $\ga([t_i,t_{i+1}])\cup \si_{i+1}\cup \al_i\cup \si_i$, (or triangle when $x_i=x_{i+1}$). Assume that $r({x_i})\geq r(x_{i+1})$, then this four-gon is contained in $B_{3r(x_i)}(x_i)\su B_{R(x_i)}(x_i)$, because $\length(\al_i)=d(x_i,x_{i+1})\leq \length(\si_i)+\length(\si_{i+1})+\length(\ga([t_i,t_{i+1}]))\leq 3r(x_i)$.

We take the approximating curve $\al$ to be $\al_1\cup\al_2\cup\dots\al_n$. Since each four-gon $\ga([t_i,t_{i+1}])\cup \si_{i+1}\cup \al_i\cup \si_i$ is contained in the ball $B_{R(x_i)}(x_i)$ (or $B_{R(x_{i+1})}(x_{i+1})$, if $r(x_{i+1})\geq r(x_i)$), by the same argument as Step 1, Step2 and Step 3 (1) in  Lemma~\ref{lm1}, $\ga$ is homotopic to $\al$ by a homotopy with width bounded by $2D+2D+5D =9D$.

Note that the simplicial length of $\al$ is bounded by the number of segements in the partition of $\ga$. By Lemma \ref {lem2_c}, $$r(x_j)=\frac{1}{32\sqrt{1+2\cdot10^{-3}}}\x r_{h}(x_j) \geq \frac{1}{32\sqrt{1+2\cdot10^{-3}}}\x r(\Bb^k_i)$$
Then the number of segements in the partition of $\ga$ is bounded by $$32\sqrt{1+2\cdot10^{-3}} l/r(\Bb^k_i) \leq 64l/r(\Bb^k_i) .$$
Now suppose that $\ga$ is a minimizing geodesic of $M$ contained in $\Bb_i^{k}$. Recall that for the body $\Bb_i^{k}$, we have
\begin{equation}
\Bb_i^{k}=B_{2\bar{r}_i^{k-1}}(x_i^{k-1})\setminus \cup_j B_{r_j^{k}}(x_j^{k}). \nonumber
\end{equation}
Thus, a minimizing geodesic of $M$ in $\Bb_i^{k}$ has length bounded by $2\bar{r}_i^{k-1}$. Moreover, by (1) of Theorem \ref{thm3_f}, $\diam(\Bb_i^{k})/r(\Bb^k_i) =2\bar{r}_i^{k-1}/r(\Bb^k_i) \leq 1/r_0(v,D)$. Hence, the conclusion follows.

\end{proof}

Given a minimizing geodesic $\ga$ of $M$ in a body, Lemma~\ref{lm6} tells us that we can partite $\ga$ into controlled numbers of pieces, so that each piece is contained in a contractible ball $B_{R(x_i)}(x_i)$. Consequently, we can homotope $\ga$ to a curve in $\Ga$ within the controlled width. However, such argument can not be applied to a minimizing geodesic in a neck. Indeed, the trapezoids in a neck are thin and long. So it is possible that a minimizing geodesic goes in and out a large trapezoid $\bar{T}^{k+1}_{j,i}$ many times which can not be bounded by any function of $v$ and $D$. To resolve this problem, we are going to use the geometry of the neck. Let us introduce the following result which is a combination of Proposition 3.22 in \cite{gromov2007metric} and Corollary 6.3 in \cite{nabutovsky2013length}.


\begin{lemma} \label{Gromov lemma}
Let $M$ be a closed Riemannian manifold with diameter $d$ and let $p \in M$. If the fundamental group $\pi_1(M,p)$ is finite with order $l$, then there exists generators $\{g_1, \ldots, g_K\}$  of $\pi_1(M,p)$ such that $\length(g_i) \leq 2d$, for $1 \leq i \leq K$. Moreover, any element $g \in \pi_1(M,p)$ can be represented in a word of those generators with the length of the word bounded by $l/2$.
\end{lemma}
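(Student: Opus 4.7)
The plan is to carry out Gromov's classical short-basis construction in the universal cover, and supplement it with a coset-chain argument for the word length bound. Lift $p$ to a point $\tilde p$ in the universal cover $\pi : \tilde M \to M$ and identify $\pi_1(M,p)$ with the deck transformation group; then for each $g$, the infimum of lengths of loops at $p$ in the class $g$ equals the translation distance $d_{\tilde M}(\tilde p, g \tilde p)$, attained by Hopf-Rinow. Construct the generators recursively: set $H_0 = \{e\}$ and, while $H_{i-1} := \langle g_1, \ldots, g_{i-1}\rangle$ is a proper subgroup of $\pi_1(M,p)$, choose $g_i \in \pi_1(M,p) \setminus H_{i-1}$ minimizing the translation distance. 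Since $|H_i| \geq 2 |H_{i-1}|$, the procedure terminates after at most $K \leq \log_2 l$ steps with $H_K = \pi_1(M,p)$.

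For the length bound, I would argue by contradiction. Assume $L := d_{\tilde M}(\tilde p, g_i \tilde p) > 2d$ and let $x$ be the midpoint of a minimizing $\tilde M$-geodesic from $\tilde p$ to $g_i \tilde p$. Since $\diam(M) = d$, there exists $h \in \pi_1(M,p)$ with $d_{\tilde M}(x, h \tilde p) \leq d$; the strict inequality $L/2 > d$ forces $h \notin \{e, g_i\}$. The triangle inequality then gives both $d_{\tilde M}(\tilde p, h \tilde p) \leq L/2 + d < L$ and $d_{\tilde M}(\tilde p, h^{-1} g_i \tilde p) = d_{\tilde M}(h \tilde p, g_i \tilde p) \leq L/2 + d < L$. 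By the minimality in the selection of $g_i$, both nontrivial elements $h$ and $h^{-1} g_i$ lie in $H_{i-1}$, so $g_i = h \cdot (h^{-1} g_i) \in H_{i-1}$, a contradiction.

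For the word length bound, set $a_i := [H_i : H_{i-1}] \geq 2$, so that $\prod_{i=1}^K a_i = l$. I would prove by induction on $i$ that every element of $H_i$ admits a word in $g_1^{\pm 1}, \ldots, g_i^{\pm 1}$ of length at most $|H_i|/2$. For the inductive step, a given $g \in H_i$ is first routed to the correct coset of $H_{i-1}$ in $H_i$ using at most $\lfloor a_i / 2 \rfloor$ applications of $g_i^{\pm 1}$ (both directions being available), after which the residual factor lies in $H_{i-1}$ and the inductive hypothesis applies. Combining with the elementary inequality $\sum_i a_i \leq \prod_i a_i$ for integers $a_i \geq 2$ (established by induction on $K$) yields the total bound $\sum_i \lfloor a_i / 2 \rfloor \leq l/2$.

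The main obstacle is justifying the coset-routing step when $H_{i-1}$ is not normal in $H_i$: the cosets $g_i^j H_{i-1}$ may not exhaust $H_i/H_{i-1}$, so pure powers of $g_i$ cannot in general reach every coset. The argument of Nabutovsky-Rotman (Corollary 6.3 of \cite{nabutovsky2013length}) handles this by interleaving $g_i^{\pm 1}$-steps with words in $H_{i-1}$, exploiting the full permutation action of the generators on the coset space, with a careful accounting of the interleaved contributions that preserves the bound $|H_i|/2$ at each step of the induction.
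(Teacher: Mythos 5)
The paper does not actually prove this lemma: it is stated as ``a combination of Proposition 3.22 in \cite{gromov2007metric} and Corollary 6.3 in \cite{nabutovsky2013length}'' and left to those references, so there is no in-paper proof to compare against. Your first half --- the recursive short-basis construction in the universal cover and the $2d$ length bound via the midpoint/triangle-inequality contradiction --- is the classical Gromov argument and is correct as written. For the word-length bound, you have correctly diagnosed the flaw in your own sketch: when $H_{i-1}$ is not normal in $H_i$, the left cosets $g_i^j H_{i-1}$ need not exhaust $H_i/H_{i-1}$, so ``$\lfloor a_i/2 \rfloor$ applications of $g_i^{\pm 1}$'' is not a justified routing bound. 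What you leave as a gesture toward the reference is genuinely missing.

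The fix is cleaner than your sketch suggests, and you do not need the tight $\lfloor a_i/2\rfloor$ count. Consider the Schreier graph on the $a_i$ left cosets of $H_{i-1}$ in $H_i$, with an edge $C \to g_j^{\pm 1} C$ for each $j \le i$. It is connected because $\langle g_1,\dots,g_i\rangle = H_i$ acts transitively on $H_i/H_{i-1}$, so a connected graph on $a_i$ vertices has diameter at most $a_i - 1$: there is a word $w$ of length $\le a_i - 1$ in $g_1^{\pm 1},\dots,g_i^{\pm 1}$ with $wH_{i-1}=gH_{i-1}$, hence $g = w\,h$ with $h := w^{-1}g \in H_{i-1}$. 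By induction $h$ has word length $\le |H_{i-1}|/2$, so $g$ has word length $\le (a_i-1) + |H_{i-1}|/2$. This is $\le a_i |H_{i-1}|/2 = |H_i|/2$ precisely because $|H_{i-1}|\ge 2$ for $i\ge 2$ (and the base case $i=1$ is the cyclic group, where $\lfloor a_1/2\rfloor$ really is achievable). Taking $i=K$ gives the claimed $l/2$. Note this dispenses with your separate inequality $\sum_i a_i \le \prod_i a_i$ --- the induction already carries the correct cumulative bound $|H_i|/2$ at each level.
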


With the above lemma, we can show that the a minimizing geodesic in the neck region is homotopic to a curve with bounded simplicial length in $\Ga$.

\begin{lemma}\label{approx in neck}
Suppose that $\ga:[0,1] \ra \overline{A_{\bar{r}_j^k, 2r_j^k} (x_j^k)}\subset \Nn_j^{k+1}$ for some neck $\Nn_j^{k+1}$. Then $\ga$ is path homotopic to a curve $\ga'$ through a path homotopy with width bounded by $7D$. Moreover,  there exists a curve $\al\su \Ga$ with simplicial length bounded by $\frac{20+2C(v,D)}{r_0(v,D)}+4$ such that $\ga'$ is homotopic to $\al$ through a homotopy with width bounded by $60D$. Here, the constants $C(v,D)$ and $r_0(v,D)$ are defined in (1) of Theorem \ref{thm3_f}.
\end{lemma}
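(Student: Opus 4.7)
My plan is to reduce the neck problem to an approximation question in an adjacent body region, where Lemma~\ref{lm6} already supplies a simplicial-length bound of the form (length)/$r_0(v,D)$. Throughout I work in the model annulus in $\R^4/\Ga_j^{k+1}$ via the diffeomorphism $\Phi_j^{k+1}:A_{\bar r_j^k/2,2r_j^k}(0)\to\Nn_j^{k+1}$ from~\eqref{equ4.1}, using the metric distortion bound~\eqref{equ5} to translate estimates back to $M$.

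For the path homotopy I would write $\tilde\ga(t)=(r(t),\omega(t))$ in polar coordinates on the model annulus, with $r(t)\in[\bar r_j^k,2r_j^k]$ and $\omega(t)\in S^3/\Ga_j^{k+1}$. Define $\tilde\ga'(t)=(\rho(t),\omega(t))$, where $\rho$ ramps linearly from $r(0)$ up to $2r_j^k$ on $[0,\tfrac13]$, stays at $2r_j^k$ on $[\tfrac13,\tfrac23]$, and ramps down to $r(1)$ on $[\tfrac23,1]$. Straight-line radial interpolation is a path homotopy whose trajectories have Euclidean length at most $2r_j^k-\bar r_j^k\le D$; pushing forward by $\Phi_j^{k+1}$ and absorbing the metric distortion~\eqref{equ5} yields a path homotopy in $M$ of width $\le 7D$.

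Next, the middle of $\ga'$ sits on the outer sphere $\Phi_j^{k+1}(\{2r_j^k\}\times S^3/\Ga_j^{k+1})$, which has diameter $\le 2\pi r_j^k\le\pi D$ and fundamental group $\Ga_j^{k+1}$ of order $\le C(v,D)$. Lemma~\ref{Gromov lemma} therefore supplies generators of length $\le 2\pi D$ and represents the homotopy class of the (based) spherical loop by a word of length $\le C(v,D)/2$, giving a curve $\ga''$ freely homotopic to $\ga'$ whose middle portion has length at most $(2+\pi C(v,D))\cdot 2r_j^k\le (10+C(v,D))D$. This free homotopy is carried out piece by piece: each elementary step lives either in a single large trapezoid $\bar T^{k+1}_{j,i}\subset\Nn_j^{k+1}$ or in a single harmonic ball $B_{R(x_l)}(x_l)$ of the parent body, and its width is controlled by Lemma~\ref{trapezoid contraction}(3) or Lemma~\ref{lem1_c}; summing over the bounded number of steps gives total width $\le 60D$. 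Finally, the outer sphere of the neck is contained in the parent body $\Bb^k_i$, where Theorem~\ref{thm3_f}(1) gives $r_h(x)/\diam(\Bb^k_i)\ge r_0(v,D)$. Applying Lemma~\ref{lm6} to the portion of $\ga''$ inside $\Bb^k_i$ yields a simplicial approximation using $\le \tfrac{20+2C(v,D)}{r_0(v,D)}$ edges, and two further edges at each endpoint connect the resulting body vertex to the trapezoid center containing $\ga(0)$ (resp.\ $\ga(1)$), accounting for the $+4$.

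The main obstacle is engineering the free homotopy so that it simultaneously shortens the spherical portion of $\ga'$ (Lemma~\ref{Gromov lemma} only guarantees existence of a short representative, not a direct width bound) and moves the two free endpoints onto $\Ga$-vertices, all within total width $60D$. The trick is to decompose the shortening into moves each supported in a single $\bar T^{k+1}_{j,i}$ or a single $B_{R(x_l)}(x_l)$, where Lemmas~\ref{trapezoid contraction}(3) and~\ref{lem1_c} give width bounds of $21D$ and $D$ respectively, and then count carefully using $|\Ga_j^{k+1}|\le C(v,D)$ to keep the number of such moves under control.
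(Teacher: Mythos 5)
Your skeleton — radially retract $\ga$ onto a single sphere, use Lemma~\ref{Gromov lemma} to shorten the resulting loop on that sphere, then approximate the shortened curve in $\Ga$ via Lemma~\ref{lm6} applied to the body where the sphere sits — is the same as the paper's. The substantive difference is the choice of sphere: you push \emph{outward} to $\Phi_j^{k+1}(\{2r_j^k\}\times S^3/\Ga_j^{k+1})$, which lies in the \emph{parent} body $\Bb_l^k$, whereas the paper pushes \emph{inward} to $\Phi_j^{k+1}(S_\lambda(0)/\Ga_j^{k+1})$ for a chosen $\bar r_j^k/2<\lambda<\bar r_j^k$, which lies in $\overline{B_{\bar r_j^k}(x_j^k)}\cap\Nn_j^{k+1}$, adjacent to the \emph{child} body $\Bb_j^{k+1}$ whose diameter is exactly $2\bar r_j^k$. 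This is not a cosmetic choice. The whole point of Lemma~\ref{lm6} is that the simplicial length scales as $(\text{length of curve})/r(\Bb)$ with $r(\Bb)\ge r_0\cdot\diam(\Bb)$, so the length of the shortened spherical loop must be tied to the diameter of the \emph{correct} body. The paper's $\lambda<\bar r_j^k$ makes the ratio $\lambda/\diam(\Bb_j^{k+1})<1/2$ automatic.

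Your constant calculation does not close. You first note (correctly) that the middle portion has length at most $(2+\pi C(v,D))\cdot 2r_j^k$, but then pass to the absolute bound $(10+C(v,D))D$ and feed that into Lemma~\ref{lm6}. That step loses: dividing $(10+C)D$ by $r(\Bb_l^k)\ge r_0\cdot\diam(\Bb_l^k)=r_0\cdot 2\bar r_l^{k-1}$ introduces the factor $D/\bar r_l^{k-1}$, which is \emph{unbounded} deep in the bubble tree. To salvage it you must keep the length at scale $r_j^k$ and invoke $r_j^k\le\bar r_l^{k-1}$ (true because $B_{2r_j^k}(x_j^k)\subset B_{2\bar r_l^{k-1}}(x_l^{k-1})$), which gives a bound of the right form $O(C(v,D)/r_0)$ but with a different constant than the $\tfrac{20+2C}{r_0}$ you quote; the constant cannot be simply read off from the paper's statement without redoing the arithmetic for your sphere. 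Moreover, the width of the spherical shortening lift (diameter of the universal cover sphere) is $\pi\cdot 2r_j^k$ in your version versus $\pi\lambda<\pi\bar r_j^k$ in the paper's; this still fits inside $7D$ since $2r_j^k\le D$, but you should verify that explicitly rather than inherit the paper's $3D+4\lambda$ breakdown.

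The more serious gap is the one you flag yourself: the $60D$ width of the homotopy from $\ga'$ to the simplicial curve $\al$. You propose decomposing the free homotopy into moves each supported in one $\bar T^{k+1}_{j,i}$ or one $B_{R(x_l)}(x_l)$ and ``counting carefully,'' but this is precisely where the work of the lemma lies and your sketch does not carry it out. The paper instead partitions $\ga'$ into five explicit pieces $[0,t_1],[t_1,t_2],[t_2,t_3],[t_3,t_4],[t_4,1]$ along the spheres $S_{r_j^k}(x_j^k)$ and $S_{2\bar r_j^k}(x_j^k)$, handles the body pieces with Lemma~\ref{lm6}, and handles the two annular pieces $[t_1,t_2]$ and $[t_3,t_4]$ by the argument of Lemma~\ref{lm1} (yielding exactly two edges each and width $\le 60D$). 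Your proposal conflates the radial ramps — which traverse the trapezoid-covered region $A_{2\bar r_j^k,r_j^k}(x_j^k)$ — with the ``$+4$ connecting edges,'' without explaining why a ramp that leaves a trapezoid and enters the body can be absorbed into two edges with bounded width. You would need to reproduce something like the paper's case (3) analysis of a ball-trapezoid transition from Lemma~\ref{lm1}.
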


\begin{proof}
First recall that the neck satisfies
\begin{equation}
A_{\bar{r}_j^k /2, 2 r_j^k} (x_j^k)   \subset \Nn_j^{k+1} \subset A_{(1-\ep)\bar{r}_j^k /2, 2(1+\ep) r_j^k} (x_j^k), \nonumber
\end{equation}
and there is a diffeomorphism $\Phi_j^{k+1}: A_{\bar{r}_j^k/2,2r_j^k}(0)\ra \Nn_j^{k+1}$, where  $0\in \R^4/\Ga_j^{k+1}$ and $\Ga_j^{k+1}\su O(4)$ with $|\Ga_j^{k+1}| \leq C(v,D)$. Moreover, if $g_{ij}={\Phi_j^{k+1}}^*g$ is the pullback metric, then we have equation (\ref{equ5})
\begin{equation}
||g_{ij}-\de_{ij}||_{C^0}+\bar{r}_j^k\cdot|| \dl g_{ij}||_{C^0}\leq \ep(v)<0.1. \nonumber
\end{equation}

Let $S_r(0)$ be the sphere of radius $r$ in $\R^4$ and $S_r(x)$ be sphere of radius $r$ at $x \in M$. We choose $\lambda$, such that
\begin{equation}
\bar{r}_j^k/2 < \lambda <\dist_{\R^4/\Ga_j^{k+1}}\big([\Phi_j^{k+1}]^{-1}(S_{\bar{r}_j^k}(x_j^k)),0\big) \quad \text{and}\quad \lambda<\bar{r}_j^k. \nonumber
\end{equation}
Note that we have $S_{\lambda}(0)/\Ga_j^{k+1} \subset  A_{\bar{r}_j^k/2,2\bar{r}_j^k}(0)$ and $\Phi_j^{k+1}(S_{\lambda}(0)/\Ga_j^{k+1})\su \overline{B_{\bar{r}_j^k}(x_j^k)} \cap\Nn_j^{k+1} $.

Let us define a deformation retraction $H(x,t)$ of $A_{\bar{r}_j^k/2,2r_j^k}(0)$ onto $S_{\lambda}(0)/\Ga_j^{k+1}$ along the radical direction:
\[
H(x,t)=\frac{x}{\|x\|}[(\lambda-\|x\|)t+\|x\|],  \quad \text{for } x \in A_{\bar{r}_j^k/2,2r_j^k}(0) \text{ and } 0 \leq t \leq 1.
\]

Define $\ti{\ga}(t):[0,1] \ra M$ as follows:
\[
\ti{\ga}(t)=
\begin{cases}
  \Phi_j^{k+1} \circ H ([\Phi_j^{k+1}]^{-1}(\ga(0)),3t)  \quad 0 \leq t \leq \frac{1}{3}, \\
  \Phi_j^{k+1} \circ H ([\Phi_j^{k+1}]^{-1}(\ga(t)),1)  \quad \frac{1}{3} \leq t \leq \frac{2}{3},  \\
  \Phi_j^{k+1} \circ H ([\Phi_j^{k+1}]^{-1}(\ga(1)),3-3t) \quad \frac{2}{3} \leq t \leq 1.
\end{cases}
\]
Obviously, $[\Phi_j^{k+1}]^{-1}(\ti{\ga})$  is path homotopic to $[\Phi_j^{k+1}]^{-1}(\ga)$ through a straight line homotopy in radical direction in $A_{\bar{r}_j^k/2,2r_j^k}(0)$. The width of this homotopy is bounded by $2r_j^k$. Thus, by equation (\ref{equ5}) $\ga$ is path homotopic to $\ti{\ga}$ within width $\frac{2r_j^k}{\sqrt{1-\ep(v)}} \leq 3D$.
Note that $\ti{\ga}([\frac{1}{3},\frac{2}{3}]) \subset \Phi_j^{k+1}(S_{\lambda}(0)/\Ga_j^{k+1})$.

\

\textbf{Claim.} Any curve $\ga:[0,1] \ra S_{\lambda}(0)/\Ga_j^{k+1}$ is path homoptic to a curve $\si$ in $S_{\lambda}(0)/\Ga_j^{k+1}$ with $\length(\si) \leq \lambda \pi (C(v,D)+2)$. Moreover, the width of the homotopy is bounded by $\pi \lambda$.

\begin{proof}[Proof of the claim.]
Let us connect $\ga(0)$ and $\ga(1)$ by a minimizing geodesic $g_0$ in $S_{\lambda}(0)/\Ga_j^{k+1}$. Then $\ga g^{-1}_0$ is a loop based at $\ga(0)$. By Lemma~\ref{Gromov lemma}, $\ga g^{-1}_0$ is path homotopic to $g_1g_2 \cdots g_l$, where $g_i \in \pi_1(S_{\lambda}(0)/\Ga_j^{k+1}, \ga(0))$, for $1 \leq i \leq l$ and $l \leq |\Ga_j^{k+1}|/2$.

Furthermore, $\length(g_i) \leq 2 \diam(S_{\lambda}(0)/\Ga_j^{k+1})\leq 2 \pi \lambda $, for $0 \leq i \leq l$. Thus, $\ga$ is path homotopic to $\si=g_1g_2 \cdots g_l g_0$. The $\length(\si) \leq 2\pi \lambda ( |\Ga_j^{k+1}|/2+1) \leq \lambda \pi (C(v,D)+2)$.

Now, $\ga \si^{-1}$ is a homotopically trivial loop in $\pi_1(S_{\lambda}(0)/\Ga_j^{k+1}, \ga(0))$. We lift $\ga$ and $\si$ to the universal covering space $S_{\lambda}(0)$ by a local isometry $\phi$. $\phi(\ga)$ and $\phi(\si)$ are path homotopic in $S_{\lambda}(0)$ within width bounded $\pi \lambda$. Therefore, the covering map induces a path homotopy between $\ga$ and $\si$ with width bounded $\pi \lambda$. This proves the claim.
\end{proof}
Now, it follows from the above claim and equation (\ref{equ5})  that there is a curve $\tilde{\si}: [\frac{1}{3},\frac{2}{3}]\ra \Phi_j^{k+1}(S_{\lambda}(0)/\Ga_j^{k+1})$ such that $\length(\tilde{\si}) \leq \frac{\lambda \pi (C(v,D)+2)}{\sqrt{1-\ep(v)}} \leq 4\lambda (C(v,D)+2)$. Moreover, $\tilde{\si}$
is path homotopic to $\ti{\ga}[\frac{1}{3},\frac{2}{3}]$ within width bounded by $\frac{\pi \lambda}{\sqrt{1-\ep(v)}} \leq 4\lambda$.

\begin{figure}[htbp]\label{projection in neck}
\centering\includegraphics[width=7cm]{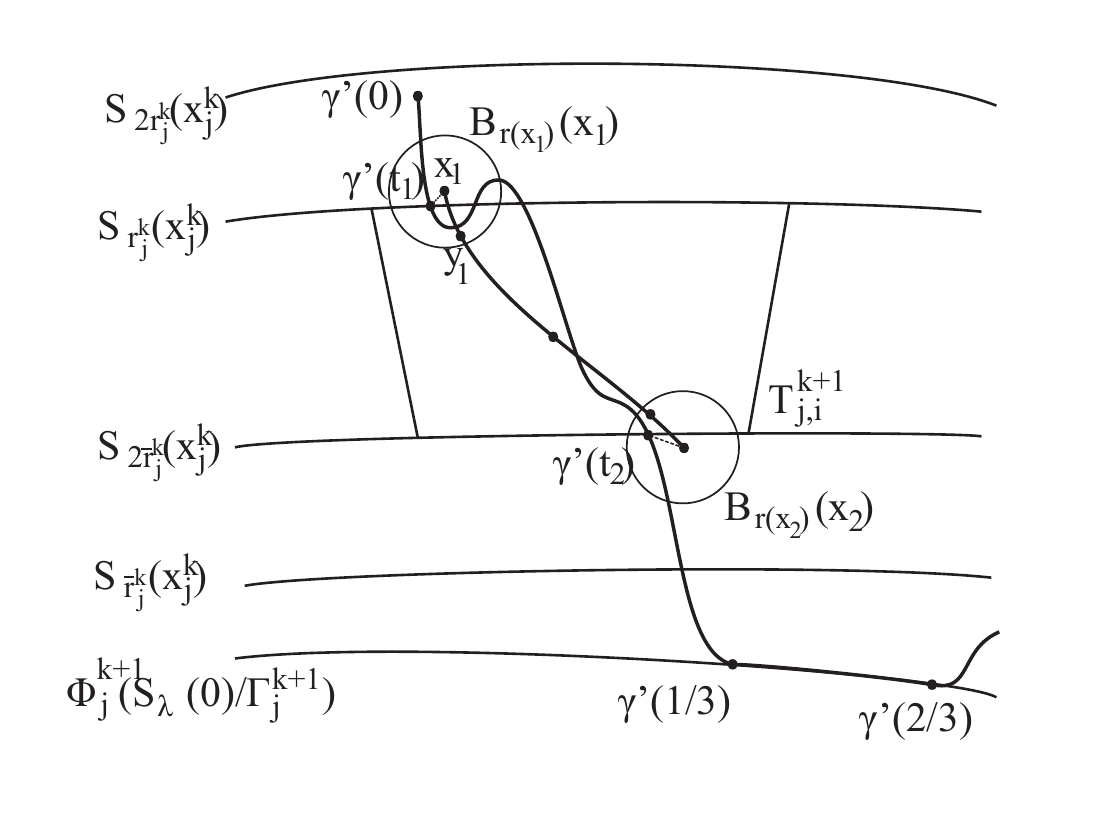}
\vspace*{6pt}
\caption{The curve $\ga'$ and its approximation.}\label{projection in neck}
\end{figure}

Define $\ga'(t):[0,1] \ra M$ as follows: $\ga'(t)=\tilde{\ga}(t)$ for $0 \leq t \leq \frac{1}{3}$, $\ga'(t)=\tilde{\si}(t)$ for $\frac{1}{3} \leq t \leq \frac{2}{3}$,  and $\ga'(t)=\tilde{\ga}(t)$ for $\frac{2}{3} \leq t \leq 1$. Since $\lambda<\bar{r}_j^k\leq D$, $\ga$ is path homotopic to $\ga'$ within width bounded by $3D+4\lambda \leq 7D$.

Without loss of generality, assume that  $\ga(0) \in S_{2r_j^k}(x_j^k)$. By our construction, $\ga'([0,\frac{1}{3}]) \su \bar{T}^{k+1}_{j,i}$. Let us pick a point $t_1$ such that $\ga'([0,t_1)) \su M \setminus B_{r^k_j}(x_j^k)$ and a point $t_2$ such that $\ga'((t_2,\frac{1}{3}]) \su B_{2\bar{r}_j^k}(x_j^k)$. (In the general case when $\ga(0) \not \in S_{2r_j^k}(x_j^k)$, $t_1$ may or may not exist. But the discussion is the same.) With the same method as in Lemma \ref{lm1}, $\ga'([t_1,t_2])$ is homotopic to two edges in $\Ga$ through a homotopy with width bounded by $60D$.(See Figure~\ref{projection in neck}).

We estimate the length of $\ga'([0,t_1])$. Note that $\ga'(0)$ and $\ga'(t_1)$ can be connected by a curve $\eta_1$ in $\Nn_j^{k+1}$ with length bounded by $2r_j^k$. Hence,
\begin{equation}
\length ([\Phi_j^{k+1}]^{-1}\big( \ga'([0,t_1]) \big)) \leq \length \big( [\Phi_j^{k+1}]^{-1}(\eta_1) \big) \leq \sqrt{1+2\ep(v)} \length(\eta_1) \leq 4r_j^k. \nonumber
\end{equation}
And $\length ( \ga'([0,t_1])) \leq \frac{4r_j^k}{\sqrt{1-\ep(v)}} \leq 8r_j^k$. Note that $\ga'([0,t_1])$ is in the body $\Bb_l^{k}$ and $\diam(\Bb_l^{k}) >2r_j^k$. By Lemma \ref{lm6} and (1) of Theorem \ref{thm3_f}, $\ga'([0 , t_1])$ is homotopic a curve in $\Ga$ with simplicial length bounded by $\frac{4}{r_0(v,D)}$ through a homotopy with width bounded by $9D$.

Next, we estimate the length of $\ga'([t_2,\frac{1}{3}])$. Let $\eta_2$ be a curve that realizes the distance in $M$ between $\ga'(t_2)$ and $\Phi_j^{k+1}(S_{\lambda}(0)/\Ga_j^{k+1})$, then $\eta_2 \su \Nn_j^{k+1}$ and $\length(\eta_2) \leq 2\bar{r}_j^k (x_j^k)$. Hence,
\begin{equation}
\length ([\Phi_j^{k+1}]^{-1}\big( \ga'([t_1,\frac{1}{3}]) \big)) \leq \length \big( [\Phi_j^{k+1}]^{-1}(\eta_2) \big) \leq \sqrt{1+2\ep(v)} \length(\eta_2) \leq 4\bar{r}_j^k (x_j^k). \nonumber
\end{equation}
And $\length ( \ga'([t_2,\frac{1}{3}])) \leq \frac{4\bar{r}_j^k (x_j^k)}{\sqrt{1-\ep(v)}} \leq 8\bar{r}_j^k (x_j^k)$.

 Similarly, assume $\ga(1) \in S_{r_j^k}(x_j^k)$, we find the point $t_4$ such that $\ga'((t_4,1]) \su M \setminus B_{r^k_j}(x_j^k)$ and the point $t_3$ such that $\ga'((t_3,\frac{1}{3}]) \su B_{2\bar{r}_j^k}(x_j^k)$. Hence, $\ga'([t_3,t_4])$ is homotopic to two edges in $\Ga$ through a homotopy with width bounded by $60D$. Same argument shows that $\ga'([t_4 , 1])$ is homotopic a curve in $\Ga$ with simplicial length bounded by $\frac{4}{r_0(v,D)}$ through a homotopy with width bounded by $9D$. And we have $\length (\ga'([\frac{2}{3},t_3])) \leq 8\bar{r}_j^k (x_j^k)$. Therefore,
 $$\length (\ga'([t_2 , t_3]))  \leq 8\bar{r}_j^k (x_j^k)+8\bar{r}_j^k (x_j^k)+4\lambda (C(v,D)+2) \leq 16\bar{r}_j^k (x_j^k)+4\bar{r}_j^k (x_j^k)(C(v,D)+2)$$

 Note that $\ga'([t_2 , t_3]) \su \Bb_j^{k+1}$ and $\diam( \Bb_j^{k+1}) = 2\bar{r}_j^k (x_j^k)$. By Lemma \ref{lm6} and (1) of Theorem \ref{thm3_f}, $\ga'([t_2 , t_3])$ is homotopic a curve in $\Ga$ with simplicial length bounded by $\frac{12+2C(v,D)}{r_0(v,D)}$ through a homotopy with width bounded by $9D$. If we take  $\ga'([0,t_1])$, $\ga'([t_1,t_2])$, $\ga'([t_3,t_4])$  and $\ga'([t_4,1])$ into account, the conclusion follows.
\end{proof}

Now we can apply the previous lemmas to obtain a controlled simplicial approximation for minimizing geodesics.

\begin{lemma}\label{approx minimizing geodesic}
 Let $\ga:[0,1]\ra M$ be a minimizing geosdesic. Then there exists a curve $\al \su \Ga$ with simplicial length bounded by $(\frac{360+4C(v,D)}{r_0(v,D)}+8)\ti{N}(v,D)$ such that $\ga$ is homotopic to $\al$ through a homotopy with width bounded by $67D$.
\end {lemma}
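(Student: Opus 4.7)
The plan is to invoke the Partition Lemma (Lemma~\ref{partition of geodesic}) to chop $\ga$ into a uniformly bounded number of sub-segments, each sitting inside either a single body region $\Bb^k_i$ or inside the annulus $\overline{A_{\bar{r}_j^k,2r_j^k}(x_j^k)} \subset \Nn^{k+1}_j$ of a single neck, and then approximate each sub-segment separately using Lemma~\ref{lm6} for the body pieces and Lemma~\ref{approx in neck} for the neck pieces. Concatenating these local simplicial approximations, and inserting a short bridge in $\Ga$ whenever two consecutive pieces fail to end/begin at the same vertex, produces the desired curve $\al \subset \Ga$.

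More concretely, first I would apply Lemma~\ref{partition of geodesic} to obtain at most $4\ti{N}(v,D)+1$ sub-segments, with at most two per neck annulus, so at most $2\ti{N}(v,D)$ neck sub-segments and the rest (at most $2\ti{N}(v,D)+1$) body sub-segments. For each body piece, Lemma~\ref{lm6} yields a curve in $\Ga$ of simplicial length at most $64/r_0(v,D)$ (since for a minimizing geodesic trapped in a single body one has $\diam(\Bb_i^k)/r(\Bb_i^k) \le 1/r_0(v,D)$) together with a homotopy of width $\le 9D$. For each neck piece, Lemma~\ref{approx in neck} supplies a curve in $\Ga$ of simplicial length at most $\tfrac{20+2C(v,D)}{r_0(v,D)}+4$, connected to the original sub-segment by first a path homotopy of width $\le 7D$ and then an approximation homotopy of width $\le 60D$, giving a total width $\le 67D$.

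Next I would splice these local approximations into a single curve $\al$. At each junction point $\ga(t_i)$, the endpoint of one approximation is the center of an open set in $\mathcal{O}$ containing $\ga(t_i)$, and the starting point of the next approximation is likewise the center of an (possibly different) open set in $\mathcal{O}$ containing $\ga(t_i)$; by the construction of the graph $\Ga$ in Section~\ref{sec3}, these two centers are either identical or joined by a single edge of $\Ga$ of length $\le 6D$. Inserting one such connecting edge at each of the at most $4\ti{N}(v,D)$ junctions, the total simplicial length of $\al$ satisfies
\[
m(\al) \;\le\; (2\ti{N}+1)\cdot\tfrac{64}{r_0} \;+\; 2\ti{N}\cdot\bigl(\tfrac{20+2C}{r_0}+4\bigr) \;+\; 4\ti{N} \;\le\; \bigl(\tfrac{360+4C(v,D)}{r_0(v,D)}+8\bigr)\ti{N}(v,D),
\]
after absorbing the leftover additive constant using $\ti{N}(v,D) \ge 1$. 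The width of the global homotopy between $\ga$ and $\al$ is the maximum of the widths of the local homotopies together with the small bridges, which is bounded by $67D$.

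The main delicate point, and essentially the only non-bookkeeping step, is the junction argument: one must check that when $\ga$ transitions from a body $\Bb^k_i$ to an adjacent neck annulus $\overline{A_{\bar{r}_j^k,2r_j^k}(x_j^k)}$, the terminal vertex produced by Lemma~\ref{lm6} and the initial vertex produced by Lemma~\ref{approx in neck} are indeed centers of open sets in $\mathcal{O}$ that both contain $\ga(t_i)$, so that the required connecting edge actually exists in $\Ga$ and the width cost of homotoping across it does not exceed the $67D$ already accounted for. Granted this, the remainder of the argument is the arithmetic above.
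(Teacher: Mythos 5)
Your proposal follows essentially the same route as the paper: partition $\ga$ via Lemma~\ref{partition of geodesic}, approximate the body pieces by Lemma~\ref{lm6} and the neck-annulus pieces by Lemma~\ref{approx in neck}, and concatenate. The one place you go beyond the paper is in flagging the junction issue: the paper simply writes ``$\al = \cup_i \al_i$'' without addressing that the terminal vertex of $\al_i$ and the initial vertex of $\al_{i+1}$ need not coincide, whereas you explicitly observe that both are centers of open sets in $\mathcal{O}$ containing $\ga(t_i)$, hence are joined by an edge of $\Ga$. That is a legitimate and useful gloss (and your arithmetic, which uses the tighter count of at most $2\ti{N}+1$ body segments plus $4\ti{N}$ bridge edges, still fits under the claimed bound, absorbing the additive $64/r_0$ via $\ti{N}\geq 1$ and $r_0\leq 1$). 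The only thing left slightly informal in your write-up is the claim that the bridge insertions cost no additional width beyond $67D$ — traversing a bridge edge of length $\leq 6D$ while reconciling the two local homotopies at the junction does incur some extra width, and you assert rather than verify that it stays within budget — but the paper itself does not address this either, so you have reproduced the paper's argument faithfully and even tightened it in places.
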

\begin{proof}
First, we apply Lemma~\ref{partition of geodesic} to partite $\ga$ into geodesic segements $\{\ga_i\}$ in the bodies and the region $\overline{A_{\bar{r}_j^k, 2r_j^k} (x_j^k)}\subset \Nn_j^{k+1}$ in each neck. There are at most $4\ti{N}(v,D)+1$ geodesic segements in the bodies and at most $2\ti{N}(v,D)$ segements in the regions $\{\overline{A_{\bar{r}_j^k, 2r_j^k} (x_j^k)}\}$.

For each geodesic segment $\ga_i$ in the body, we apply Lemma \ref {lm6}. So, there is a $\al_i \su \Ga$ with simplicial length bounded by $64/r_0(v,D)$ and $\ga_i$ is homotopic to $\al_i$ with width bounded by $9D$. For each geodesic segment $\ga_i$ in $\overline{A_{\bar{r}_j^k, 2r_j^k} (x_j^k)}$, we apply Lemma \ref{approx in neck}.  So, there is a $\al_i \su \Ga$ with simplicial length bounded by $\frac{20+2C(v,D)}{r_0(v,D)}+4$ and $\ga_i$ is homotopic to $\al_i$ with width bounded by $60D+7D=67D$. If we combine all the pieces together, we have $\ga=\cup_i \ga_i$ is homotopic to
$\al=\cup_i \al_i$ with width bounded by $67D$. The simplicial length of $\al$ is bounded by $(4\ti{N}(v,D)+1)\times 64/r_0(v,D)+ (\frac{20+2C(v,D)}{r_0(v,D)}+4) \times 2\ti{N}(v,D)  \leq (\frac{360+4C(v,D)}{r_0(v,D)}+8)\ti{N}(v,D)$.
\end{proof}

We will now proceed to general curves. Similar to the case of minimizing geodesic, we are going to first introduce a partition of a general curve in $M$ in Lemma~\ref{length reduction}(1). In Lemma~\ref{length reduction}(2) and (3) we will show some rough estimate of the relation between the number of the segments in the partition and the length of the curve.

\begin{lemma} \label{length reduction}
\

\begin{enumerate}
\item For any curve $\ga:[0,1] \ra M$, there is a partition $\mathcal{P}$ of $\ga=\cup_i \ga_i$, such that each $\ga_i$ is either in a body or in $\overline{A_{\bar{r}_j^k, 2r_j^k} (x_j^k)}\subset \Nn_j^{k+1}$ for some neck. Based on this partition, one can construct a simplicial curve $\ti{\ga}:[0,1]\ra $ in $\Ga$, such that $\ga$ is homotopic to $\ti{\ga}$ with width bound by $67D$.
\item
 If there is a $\overline{A_{\bar{r}_j^k, 2r_j^k} (x_j^k)}$ which contains at least $11$ segments of the partition $\mathcal{P}$ , then there is a curve $\ga'$ such that $\ga(0)=\ga'(0)$ and $\ga(1)=\ga'(1)$. Moreover, $\length(\ga') \leq \length(\ga)-\bar{r}_j^k$.
\item
Suppose that $\ga:[0,1] \ra \Bb^k_i$ is a curve in some body. We approximate $\ga$ by a simplicial curve $\al$ in $\Ga$ as descibed in Lemma \ref{lm6}. If there is an edge in $\al$ that appears more than $4$ times, then there is a curve $\ga'$ such that $\ga(0)=\ga'(0)$ and $\ga(1)=\ga'(1)$. Moreover, $\length(\ga') \leq \length(\ga)-\frac{1}{32\sqrt{1+2\cdot10^{-3}}}\cdot r_h$, where $r_h$ is the harmonic radius of $M$.
\end{enumerate}
\end{lemma}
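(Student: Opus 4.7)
For part (1), I will generalize the partition of Lemma~\ref{partition of geodesic} from minimizing geodesics to arbitrary curves. For a minimizing geodesic it suffices to take the first and last intersection points of $\ga$ with the spheres $\dl\overline{B_{r_j^k}(x_j^k)}$ and $\dl\overline{B_{\bar{r}_j^k}(x_j^k)}$ in each neck, but for an arbitrary $\ga$ I record every transverse crossing of these spheres. The resulting partition $\ga=\cup_i\ga_i$ has each piece entirely in a body $\Bb_\ell^m$ or entirely in an annulus $\overline{A_{\bar{r}_j^k,2r_j^k}(x_j^k)}\su \Nn_j^{k+1}$. Applying Lemma~\ref{lm6} to each body piece gives a simplicial approximation in $\Ga$ with homotopy width at most $9D$, while Lemma~\ref{approx in neck} applied to each annulus piece gives width at most $67D$. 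Since consecutive pieces share an endpoint on a boundary sphere, the piecewise approximations concatenate via the existing edges of $\Ga$ into a single simplicial curve $\ti{\ga}$, with the homotopy $\ga\simeq\ti{\ga}$ of total width bounded by $\max\{9D,67D\}=67D$.

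For part (2), note first that since $\bar{r}_j^k\leq r_j^k$, any segment of the partition that crosses the annulus from $S_{\bar{r}_j^k}$ to $S_{2r_j^k}$ has length at least $2r_j^k-\bar{r}_j^k\geq r_j^k\geq\bar{r}_j^k$. Given at least eleven interior segments in $\overline{A_{\bar{r}_j^k,2r_j^k}(x_j^k)}$, I classify them by which pair of spheres contains their endpoints. The topology of the neck forces most pieces to be full crossings; among these, a pigeonhole argument on the landing points on a common boundary sphere (transported from $\R^4/\Ga_j^{k+1}$ via the diffeomorphism $\Phi_j^{k+1}$ using~\eqref{equ5}) produces two crossings whose landings are close. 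I then excise the sub-arc of $\ga$ between these two crossings and reroute through a short arc on the common boundary sphere, eliminating one full crossing of length $\geq r_j^k$ at the cost of an arc whose length is controlled by the pigeonhole separation. A suitable choice of which sphere to route along, combined with the gap $2r_j^k-\bar{r}_j^k-(\text{short arc})$, yields a net saving of at least $\bar{r}_j^k$; the modification stays inside $\Nn_j^{k+1}$ so $\ga(0),\ga(1)$ are preserved.

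For part (3), each partition segment $\ga([t_i,t_{i+1}])$ in the construction of Lemma~\ref{lm6} has length exactly $r(x_i)=\frac{1}{32\sqrt{1+2\cdot 10^{-3}}}\,r_h(x_i)\geq\frac{1}{32\sqrt{1+2\cdot 10^{-3}}}\,r_h$. If an edge $e=(x_\ell,x_j)$ appears at least five times in $\al$, pigeonhole produces three $e$-traversals $S_{a_1},S_{a_2},S_{a_3}$ with $a_1<a_2<a_3$ sharing orientation, say $x_\ell\to x_j$. Two same-direction $e$-traversals cannot be consecutive in a generic partition (the end of one lies in $B_{r(x_j)}(x_j)$ but the start of the next must lie in $B_{r(x_\ell)}(x_\ell)$), so each of the gaps $a_2-a_1$ and $a_3-a_2$ is at least $2$; hence the sub-arc $\si=\ga([t_{a_1},t_{a_3}])$ contains $S_{a_1},S_{a_2}$ together with at least two intermediate pieces, satisfying
\[
\length(\si)\geq 2r(x_\ell)+\tfrac{2}{32\sqrt{1+2\cdot 10^{-3}}}\,r_h.
\]
Both endpoints $\ga(t_{a_1}),\ga(t_{a_3})$ lie in $B_{r(x_\ell)}(x_\ell)\subset B_{R(x_\ell)}(x_\ell)$, so I replace $\si$ by a minimizing geodesic of $M$ between them, which has length at most $2r(x_\ell)$ and is a valid path homotopy by Lemma~\ref{lem1_c}. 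The resulting saving is at least $\frac{1}{16\sqrt{1+2\cdot 10^{-3}}}\,r_h$, comfortably exceeding the claimed amount.

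The main obstacle is part (2): the pigeonhole argument on a boundary sphere and the construction of the replacement arc through $\Phi_j^{k+1}$ must be carried out carefully so that the arc's length is strictly less than the saved crossing by at least $\bar{r}_j^k$. This requires attention both to the distortion in~\eqref{equ5} and to the non-trivial quotient $S^3/\Ga_j^{k+1}$ with $|\Ga_j^{k+1}|\leq C(v,D)$, whose diameter can be comparable to that of $S^3$ itself. A secondary (mild) difficulty in part (3) is ruling out the degenerate case in which two same-direction $e$-traversals appear consecutively at a single point of $B_{r(x_\ell)}\cap B_{r(x_j)}$, which can be avoided by a generic perturbation of the partition parameters.
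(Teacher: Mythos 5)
Your treatment of parts (1) and (3) is essentially sound and close in spirit to the paper, though for (3) you introduce some unnecessary machinery: you sort the repeated $e$-traversals by orientation and argue that same-oriented traversals cannot be consecutive, but the paper's shortcut is simpler. It picks, for each of the five repeated segments $\ga([t_{j_k},t_{j_k+1}])$, the endpoint $\ga(t_{j_k}^*)$ that lies in $B_{r(x_{j+1})}(x_{j+1})$ for the vertex $x_{j+1}$ with the \emph{smaller} radius, and replaces $\ga([t_{j_1}^*,t_{j_5}^*])$ by the two minimizing geodesics through $x_{j+1}$; this is oblivious to orientation and to whether traversals are adjacent, so no genericity caveat is needed. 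Your version would still yield the claimed saving, but the orientation bookkeeping buys nothing.

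Part (2) is where your plan genuinely breaks. You want to pigeonhole the landing points of the crossings on a boundary sphere and then splice in a short arc along that sphere. There are two problems. First, the outer sphere $\partial B_{2r_j^k}(x_j^k)$ has diameter comparable to $2\pi r_j^k$, and since $r_j^k/\bar r_j^k$ is \emph{not} bounded above (Remark~\ref{rm1}), eleven landings need not produce two within distance $O(\bar r_j^k)$ of each other, so the pigeonhole fails there. Second, even if you reroute along the inner sphere $\partial B_{\bar r_j^k}(x_j^k)$ (on which any two points are within $\pi\bar r_j^k$, so no pigeonhole is needed), the bookkeeping you sketch — save one full crossing of length $\geq 2r_j^k-\bar r_j^k$, pay a boundary arc of length $\leq\pi\bar r_j^k$ — gives a net saving of $2r_j^k-(1+\pi)\bar r_j^k$, and the lower bound $r_j^k>2\bar r_j^k$ only guarantees this exceeds $(3-\pi)\bar r_j^k<\bar r_j^k$; so the estimate does not close as written, and you would have to excise at least two or three crossings and re-derive the inequality. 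The paper avoids all of this with a different and cleaner idea: since the annulus has a \emph{center} $x_j^k$ sitting inside it at distance exactly $\bar r_j^k$ from the inner boundary, one cuts the curve at two Type~I endpoints $\ga(t^*),\ga(t^{**})\in\partial B_{\bar r_j^k}(x_j^k)$, replaces the arc between them by the concatenation of the two minimizing geodesics to $x_j^k$, of total length exactly $2\bar r_j^k$, and notes that the removed arc contains at least three Type~I segments each of length $\geq\bar r_j^k$, so the saving is $\geq\bar r_j^k$. Routing through the center rather than along a boundary sphere is the missing idea; it makes the replacement length $2\bar r_j^k$ independent of $r_j^k/\bar r_j^k$ and of the diameter of $S^3/\Ga_j^{k+1}$, which is precisely what your sphere-arc argument cannot control.

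One smaller remark on part (1): your partition — cut at every transverse crossing of the two spheres $\partial B_{\bar r_j^k}$, $\partial B_{2r_j^k}$ — is not quite the paper's, which cuts on the boundaries of the nested pair $\overline{A_{2\bar r_j^k,r_j^k}}$ and $\overline{A_{\bar r_j^k,2r_j^k}}$ and explicitly requires the intervening pieces to avoid all other inner annuli, so that each even-indexed piece lands in a single body region. With your choice you should check that a piece trapped between two sphere crossings actually sits in a body, since the body $\Bb_j^{k+1}=B_{2\bar r_j^k}\setminus\cup_i B_{r_i^{k+1}}$ is bounded by radii $2\bar r_j^k$ and $r_i^{k+1}$, not by $\bar r_j^k$ and $2r_j^k$; the paper's nested-annuli choice is what makes this clean.
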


\begin{remark}
The simplicial length of the curve $\ti{\ga}$ in Lemma~\ref{length reduction}(1) may not be bounded by any function of $v$ and $D$.
\end{remark}

\begin{proof}

\

\begin{enumerate}
\item
We define the partition $\mathcal{P}$ by taking a partition $0=t_0<t_1<\dots<t_n=1$ of $[0,1]$ inductively. Let $\ga(0)=\ga(t_0)$. For an odd number $i$, choose $t_i$ such that $\ga(t_i)$ on the boundary of $\overline{A_{2\bar{r}_{j_i}^{k_i}, r_{j_i}^{k_i}} (x_{j_i}^{k_i})}$ and $\ga([t_{i-1},t_i])$ is  either in $\overline{A_{2\bar{r}_{j_i}^{k_i}, r_{j_i}^{k_i}} (x_{j_i}^{k_i})}$ or its complement $\overline{A_{2\bar{r}_{j_i}^{k_i}, r_{j_i}^{k_i}} (x_{j_i}^{k_i})} ^c$. Moreover, $\ga([t_{i-1},t_i])$ does not intersect any other $\overline{A_{2\bar{r}_{j}^{k}, r_{j}^{k}} (x_j^k)}$. We then choose $\ga(t_{i+1})$ on the boundary of $\overline{A_{\bar{r}_{j_i}^{k_i}, 2r_{j_i}^{k_i}} (x_{j_i}^{k_i})}$ such that $\ga([t_{i},t_{i+1}]) \su \overline{A_{\bar{r}_{j_i}^{k_i}, 2r_{j_i}^{k_i}} (x_{j_i}^{k_i})}$.

Note that $\ga([t_0,t_1])$ and $\ga([t_{n-1},t_n])$ can be either in a body or some $\overline{A_{\bar{r}_j^k, 2r_j^k} (x_j^k)}$. For the rest of the segments, $\ga([t_{i},t_{i+1}])$ is contained in some $\overline{A_{\bar{r}_j^k, 2r_j^k} (x_j^k)}$ for $i$ odd, and $\ga([t_{i},t_{i+1}])$ is in some body for $i$ even. Now by Lemma~\ref{lm6} and Lemma~\ref{approx in neck}, one can piecewise homotope $\ga$ to a curve $\ti{\ga}$ in $\Ga$ through a homotopy of width bounded by $67D$.

\item
Now, suppose that there is a $\overline{A_{\bar{r}_j^k, 2r_j^k} (x_j^k)}$ which contains at least $11$ segments in the partition $\mathcal{P}$ of $\ga$. Note that $\ga(0)$ and $\ga(1)$ may not be on the boundaries of $\overline{A_{\bar{r}_j^k, 2r_j^k} (x_j^k)}$ or $\overline{A_{2\bar{r}_j^k, r_j^k} (x_j^k)}$. Thus, among those segments, there are at least 9 of them whose endpoints are on the boundries of the annuli. There are four possible types.

We define the segments to be of Type I, if one of their endpoint is in the boundary of the ball $B_{\bar{r}_j^k}(x_j^k)$. Similarly, we define the segments to be of Type II, if if one of their endpoint is in the boundary of the ball $B_{2{r}_j^k}(x_j^k)$. (See Figure~\ref{type I (1)} to Figure~\ref{type II (2)})

\begin{figure}[htbp]
\centering
\begin{minipage}[c]{0.47\textwidth}
\centering
\includegraphics[width=7cm]{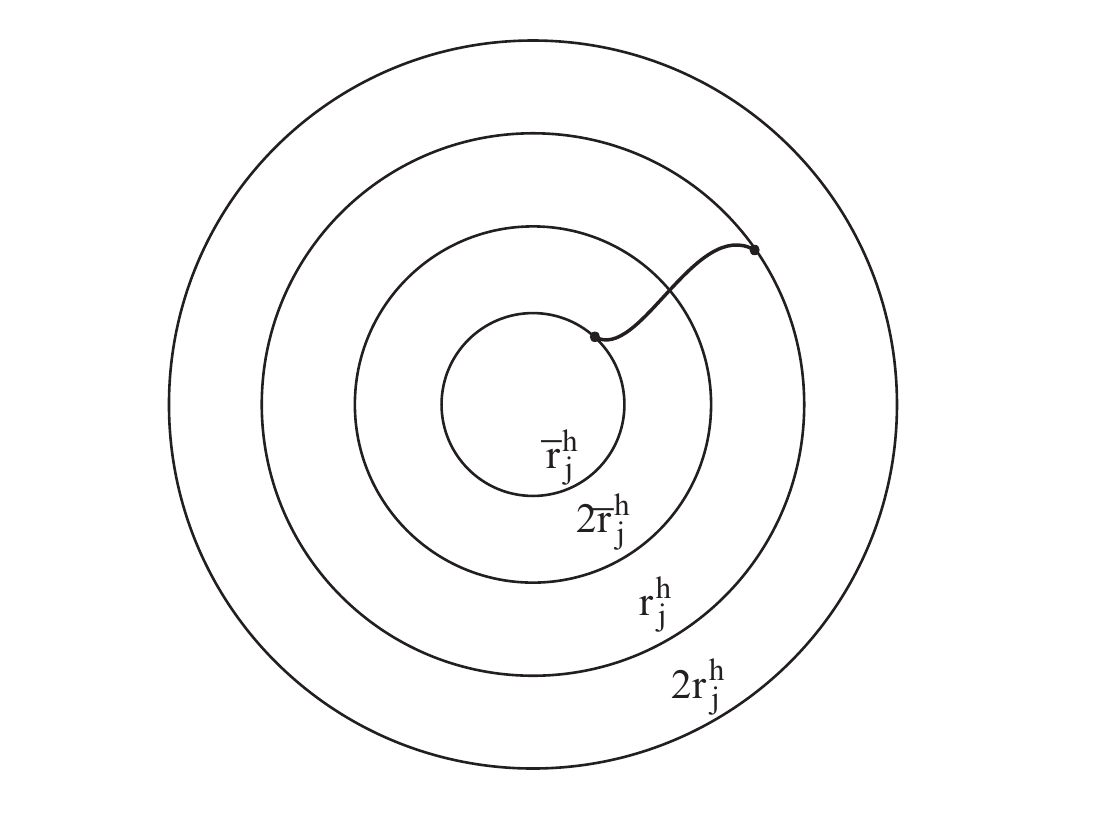}
\vspace*{6pt}
\caption{Type I segment (1).}\label{figty}\label{type I (1)}
\end{minipage}
\begin{minipage}[c]{0.47\textwidth}
\centering
\includegraphics[width=7cm]{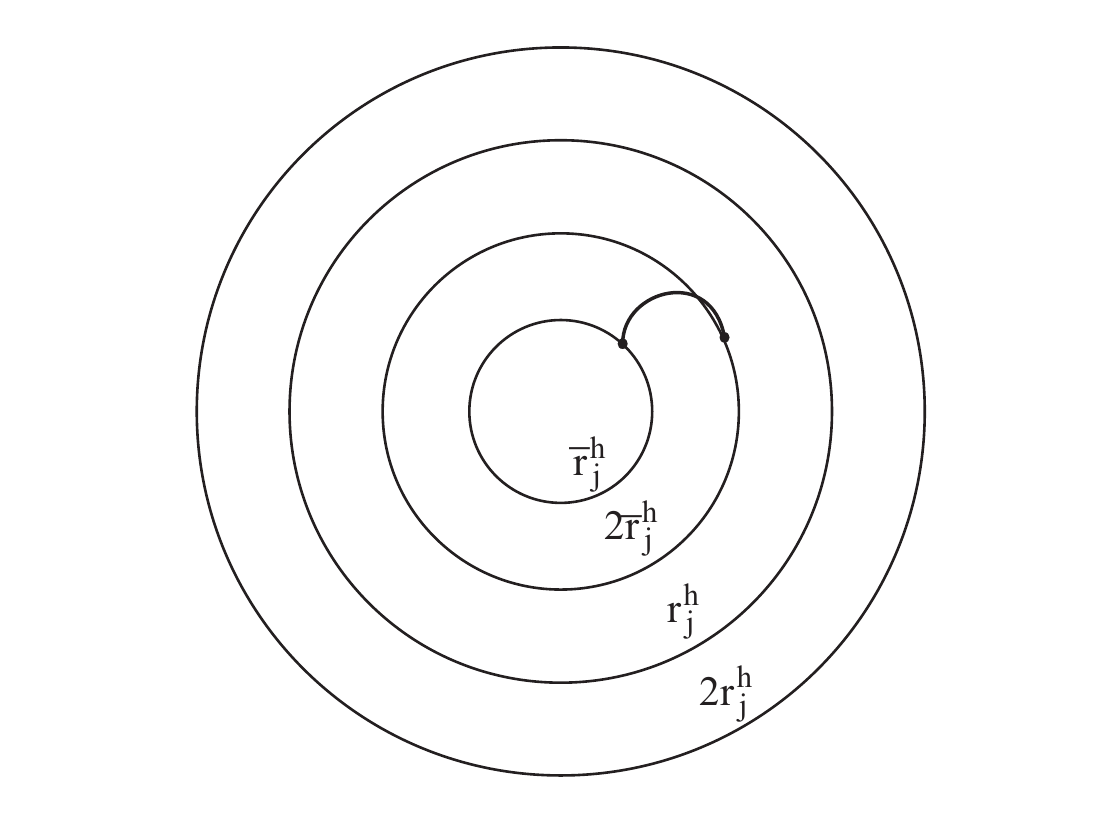}
\vspace*{6pt}
\caption{Type I segment (2)}\label{type I (2)}
\end{minipage}
\end{figure}

\begin{figure}[htbp]
\begin{minipage}[c]{0.47\textwidth}
\centering\includegraphics[width=7cm]{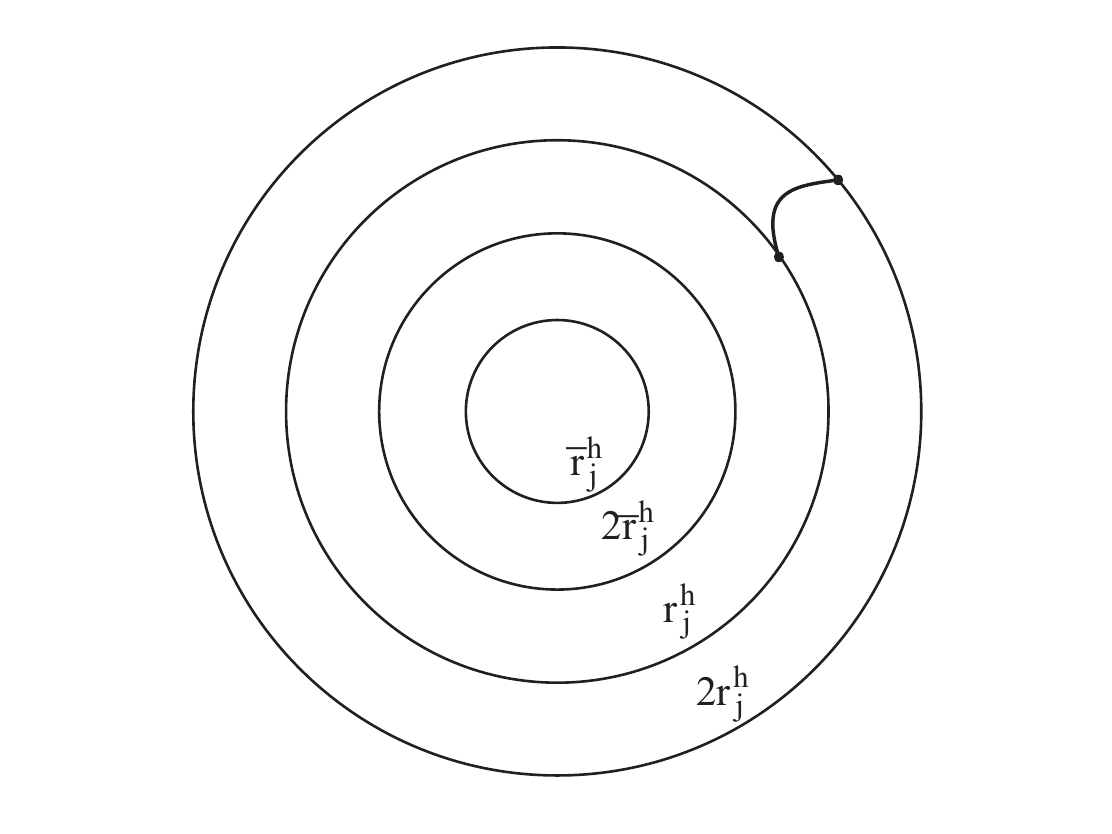}
\vspace*{6pt}
\caption{Type II segment (1)} \label{type II (1)}
\end{minipage}
\begin{minipage}[c]{0.47\textwidth}
\centering\includegraphics[width=7cm]{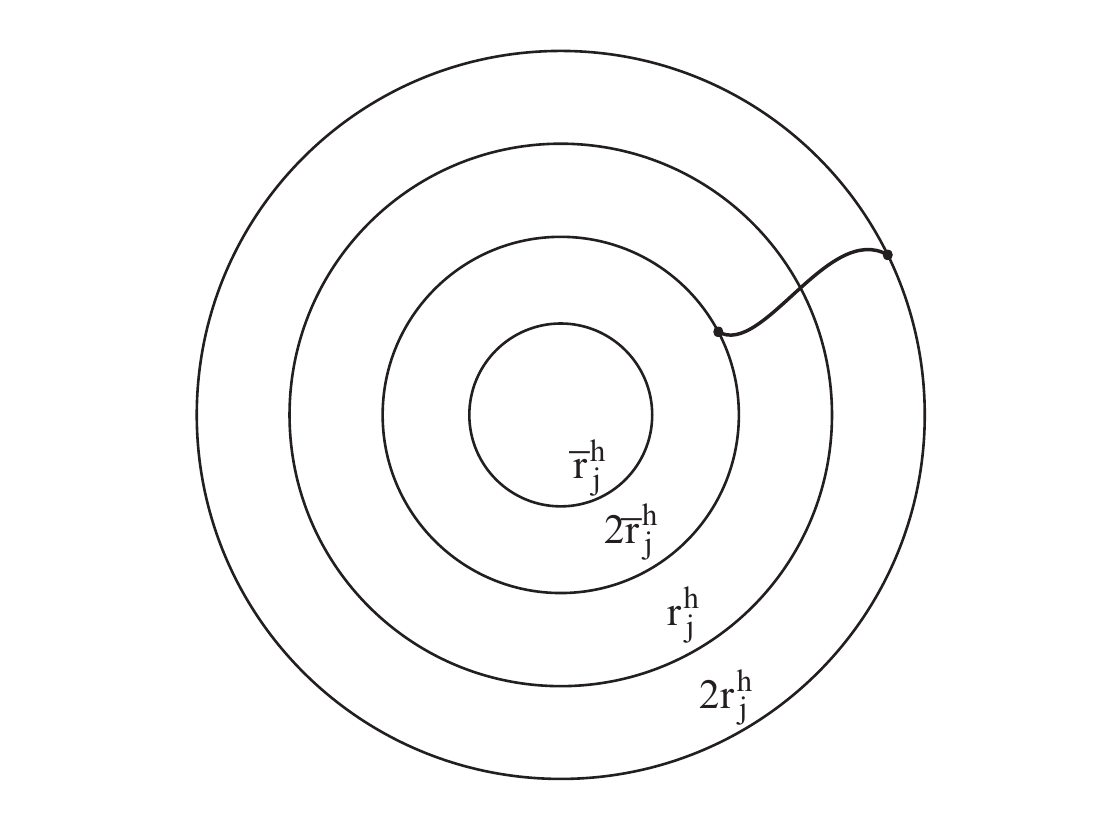}
\vspace*{6pt}
\caption{Type II segment (2)}\label{type II (2)}
\end{minipage}
\end{figure}

\begin{figure}[htbp]
\begin{minipage}[c]{0.47\textwidth}
\centering\includegraphics[width=7cm]{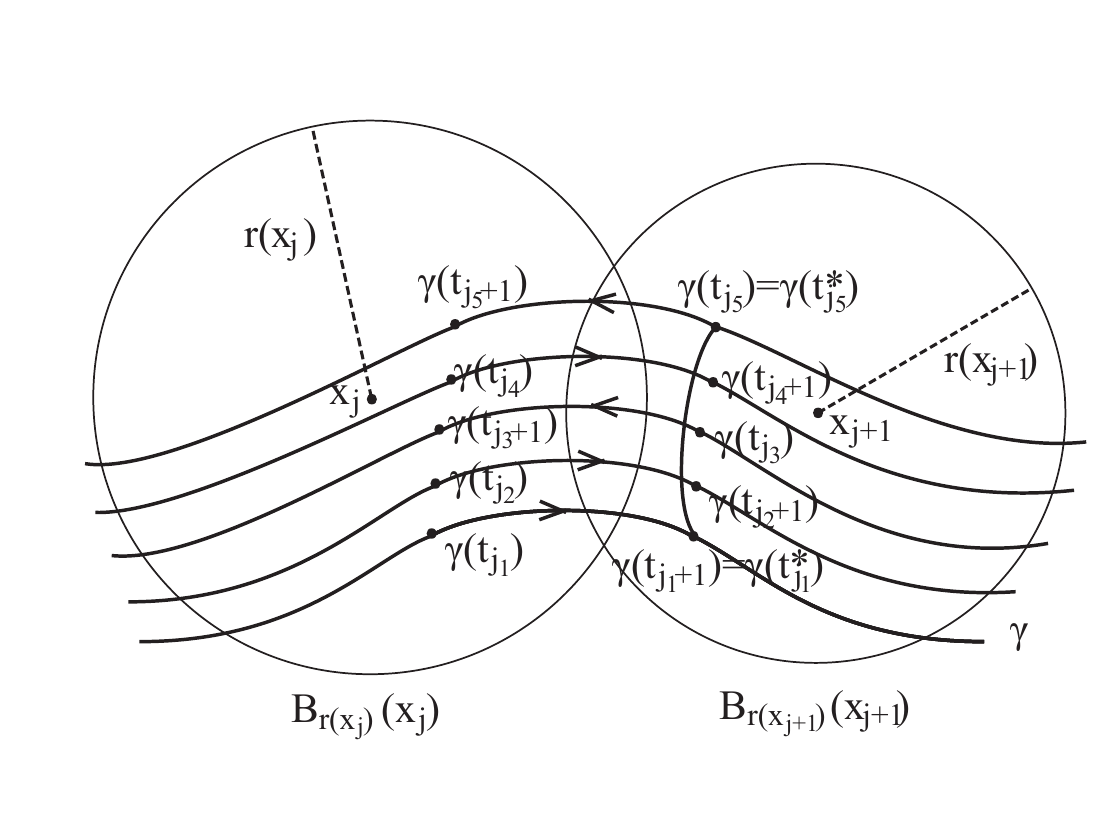}
\vspace*{6pt}
\caption{The distance $d(\ga(t_{j_1}^*), \ga(t_{j_5}^*))\leq 2r(x_{j+1})$.}\label{fig10}
\end{minipage}
\end{figure}

The segments of Type I have lengths greater than or equal to $\bar{r}_j^k$, while the segments of Type II have lengths greater than or equal to $r_j^k$. Among those $9$ segments , at least $5$ of them are of Type I or of Type II.  Suppose there are $5$ segments of Type I, namely, $\ga([t_{k_1},t_{k_2}]), \ldots, \ga([t_{k_9},t_{k_{10}}])$, where $t_{k_1} <, \cdots, < t_{k_{10}}$. Moreover, suppose  that $\ga(t^*) \in \{\ga(t_{k_1}),\ga(t_{k_2}) \}$ is the one with the shorter distance to $x_j^k$, and $\ga(t^{**})\in \{\ga(t_{k_9}),\ga(t_{k_{10}}) \}$ is the one with the shorter distance to $x_j^k$. Define $\ga'_1$ to be the minimizing geodesic from $\ga(t^*)$ to $x_j^k$. Define $\ga'_2$ to be the minimizing geodesic from $x_j^k$ to $\ga(t^{**})$. Since $\ga([t^*,t^{**}])$ contains at least $3$ segments of Type I, $\length(\ga'_1 \cup \ga'_2) =2\bar{r}_j^k <3 \bar{r}_j^k \leq \length(\ga([t^*,t^{**}]))$. Hence, define $\ga'=\ga([0,t^*]) \cup \ga'_1 \cup \ga'_2 \cup \ga([t^*,1])$ and the conclusion follows. It can be proved similarly for the case where there are $5$ segments of Type II.

\item
Suppose $\{B_{r(x_j)}(x_j)\}$ are the open sets in $\mathcal{O}$ that cover $\Bb^k_i$. Let $0=t_0<t_1<\dots<t_n=1$ be the partition of $[0,1]$ which is described in Lemma \ref{lm6}. Moreover, suppose that $\al_j$ connecting $x_j$ and $x_{j+1}$ is an edge in $\al$ and it appears more than 5 times in $\al$. Without loss of generality, assume $r(x_j)\geq r(x_{j+1})$. By our construction, there are at least 5 segments $\ga([t_{j_k},t_{j_k+1}]),k=1,2,\dots,5$ which are approximated by the edge $\al_j$. Assume that $t_{j_1}<t_{j_1+1}<t_{j_2}<\dots<t_{j_5+1}$, then the arc $\ga([t_{j_1+1},t_{j_5}])$ contains three segments $\{\ga([t_{j_k},t_{j_k+1}])\}_{k=2}^4$. Let $t_{j_k}^*\in\{t_{j_k},t_{j_k+1}\}$ such that $\ga(t_{j_k}^*)\in B_{r(x_{j+1})}(x_{j+1})$. (See Figure~\ref{fig10}.) Then $\length(\ga([t_{j_1}^*,t_{j_5}^*]))\geq \length(\ga([t_{j_1+1},t_{j_5}]))\geq 3r(x_{j+1})$. Define $\ga'_1$ to be the minimizing geodesic from $\ga(t_{j_1}^*)$ to $x_{j+1}$. Define $\ga'_2$ to be the minimizing geodesic from $x_{j+1}$ to $\ga(t_{j_5}^*)$. Then $\length(\ga'_1 \cup \ga'_2) =2r(x_{j+1}) <3r(x_{j+1}) \leq \length(\ga([t_{j_1}^*,t_{j_5}^*]))$. Hence, we define $\ga'=\ga([0,t_{j_1}^*]) \cup \ga'_1 \cup \ga'_2 \cup \ga([\ga(t_{j_5}^*),1])$. Note that for any $i$, $r(x_i) \geq \frac{1}{32\sqrt{1+2\cdot10^{-3}}}\cdot r_h$ and the conclusion follows.
\end{enumerate}
\end{proof}

The following Lemma~\ref{lm7} apply to a closed curve in $M$ and can be viewed as a generalization of Lemma~\ref{lm2}.

\begin{lemma}\label{lm7}
Let $Z(v,D)=100\ti{N}^3(\frac{20+2C(v,D)}{r_0(v,D)}+4)+10\ti{N}^2$. Let $\gamma:[0,1]\rightarrow M$ be a closed curve in $M$ which is not a closed geodesic. Let $p \in \gamma$ be a base point. Then $\gamma$ is homotopic to the wedge of some curves $\gamma_1 \vee \gamma_2 \vee\dots\vee\gamma_{k}$ based at $p$ with width bounded by $2D$. Each $\gamma_i$ is homopoted to a simplicial curve $\ti{\ga}_i \su \Ga$ with width bounded by $67D$. The simplicial length of the approximation $\ti{\ga_i}$ of each $\ga_i$ is bounded by $Z(v,D)$. Moreover, suppose that $\mathcal{P}$ is a partition of $\ga$ as in Lemma~\ref{length reduction} (1), then each $\ga_i$ satisfies the following properties.
\begin{enumerate}
\item
If there is a $\overline{A_{\bar{r}_j^k, 2r_j^k} (x_j^k)}\subset \Nn_j^{k+1}$ which contains at least $22$ segments of the partition $\mathcal{P}$, then the length of each $\gamma_i$ is less than or equal to $\length(\gamma)-\frac{1}{32\sqrt{1+2\cdot10^{-3}}}\cdot r_h/2$, where $r_h$ is the harmonic radius of $M$.
\item
Suppose that each $\overline{A_{\bar{r}_j^k, 2r_j^k} (x_j^k)}$ contains no more than $21$ segments of the partition $\mathcal{P}$. If the simplicial length $m(\tilde{\gamma})$ of $\tilde{\gamma}$ exceeds $Z$, then then the length of each $\gamma_i$ is less than or equal to $\length(\gamma)-\frac{1}{32\sqrt{1+2\cdot10^{-3}}}\cdot r_h/2$.
\end{enumerate}
\end{lemma}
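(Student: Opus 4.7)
The plan is to apply the length-reduction surgeries of Lemma~\ref{length reduction} to $\gamma$ and then construct a wedge at $p$ by making two independent cuts through a central vertex, connected to $p$ by minimizing geodesics of length at most $D$. First I would invoke Lemma~\ref{length reduction}(1) to fix a partition $\mathcal{P}$ of $\gamma$ and a simplicial approximation $\tilde\gamma\subset\Gamma$, so that $\gamma$ is homotopic to $\tilde\gamma$ with width at most $67D$.

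In Case (1), some annulus $\overline{A_{\bar{r}_j^k, 2r_j^k}(x_j^k)}$ contains at least $22$ segments of $\mathcal{P}$. Since $22 = 2\cdot 11$, a pigeonhole on the ordering of these segments along $\gamma$ produces parameters $s_1 < s_2$ such that both $\gamma|_{[s_1,s_2]}$ and its complementary arc $\gamma|_{[s_2,1]}\cup\gamma|_{[0,s_1]}$ still each contain at least $11$ annulus segments, and such that $\gamma(s_1), \gamma(s_2)$ are each joined to $x_j^k$ within the neck by geodesics $\xi_1, \xi_2$ of length at most $\bar{r}_j^k$. Picking a minimizing geodesic $\eta$ from $p$ to $x_j^k$ with $\length(\eta) \le D$, I set
\begin{align*}
\gamma_1 &= \gamma|_{[0,s_1]} \cdot \xi_1 \cdot \eta^{-1}, \\
\gamma_2 &= \eta \cdot \xi_1^{-1} \cdot \gamma|_{[s_1,s_2]} \cdot \xi_2 \cdot \eta^{-1}, \\
\gamma_3 &= \eta \cdot \xi_2^{-1} \cdot \gamma|_{[s_2,1]},
\end{align*}
giving $\gamma \sim \gamma_1 \vee \gamma_2 \vee \gamma_3$ at $p$ with width $\le 2D$ (two rebasings through the same $\eta$). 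Applying Lemma~\ref{length reduction}(2) independently to the two complementary arcs shortens each by at least $\bar{r}_j^k$, and after the replacement every $\gamma_i$ is strictly shorter than $\gamma$ by at least $\bar{r}_j^k \ge \frac{r_h}{64\sqrt{1+2\cdot10^{-3}}}$, as required.

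In Case (2), the neck contribution to $m(\tilde\gamma)$ is at most $21\tilde N(\tfrac{20+2C(v,D)}{r_0(v,D)}+4)$. The hypothesis $m(\tilde\gamma) > Z$ and the definition of $Z$ force the body contribution to exceed $99\tilde N^3(\tfrac{20+2C}{r_0}+4) + 10\tilde N^2$; since there are at most $21\tilde N + 1$ body segments, some body segment's approximation has simplicial length strictly larger than $4\tilde N^2$. As $\Gamma$ has at most $\tilde N^2$ edges, pigeonhole guarantees an edge traversed more than $4$ times within that body segment's approximation, so Lemma~\ref{length reduction}(3) yields a length decrease of at least $\frac{r_h}{32\sqrt{1+2\cdot10^{-3}}}$. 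The wedge decomposition is built analogously: cut $\gamma$ at the two endpoints of two repetitions of this edge and rebase through $p$ by minimizing geodesics of length $\le D$, losing at most half of the reduction. Iterating this procedure on any factor whose approximation still exceeds $Z$ terminates after finitely many steps, producing the required bound $m(\tilde\gamma_i) \le Z$.

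The main obstacle is the wedge bookkeeping in Case (1). A naive two-piece split $\gamma \sim \gamma' \vee (\gamma'^{-1}\gamma)$ produces a ``bubble'' whose length after rebasing at $p$ can exceed $\length(\gamma)$; the combinatorial surplus in the hypotheses ($22 = 2\cdot 11$ in Case (1), and strictly more than $4$ repetitions in Case (2)) is precisely what allows two independent cuts and a balanced three-piece wedge in which every factor is strictly shorter than $\gamma$ by the required amount. A delicate point will be verifying that the two geodesics $\xi_i$ from $\gamma(s_i)$ to $x_j^k$ can indeed be chosen inside the neck without inflating length, which uses the annular structure and the diffeomorphism $\Phi_j^{k+1}$ from equation~\eqref{equ4.1}.
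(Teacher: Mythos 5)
Your plan diverges in a fundamental way from the paper's proof, and the gap is in the wedge construction itself. You split $\gamma$ into only three pieces $\gamma_1,\gamma_2,\gamma_3$ at two points $\gamma(s_1),\gamma(s_2)$ inside the annulus. This fails on two counts. First, the length bound: writing $\gamma_2=\eta\cdot\xi_1^{-1}\cdot\gamma|_{[s_1,s_2]}\cdot\xi_2\cdot\eta^{-1}$ gives $\length(\gamma_2)\leq 2D+2\bar r_j^k+\length(\gamma|_{[s_1,s_2]})$, and since the arc $\gamma|_{[s_1,s_2]}$ can have length arbitrarily close to $\length(\gamma)$, you cannot conclude $\length(\gamma_2)\leq\length(\gamma)-\tilde C r_h/2$; the same problem afflicts $\gamma_1$ and $\gamma_3$. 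Invoking Lemma~\ref{length reduction}(2) on the complementary arcs shows that \emph{shorter replacement paths exist}, but your $\gamma_i$ are built from the \emph{original} arcs of $\gamma$ together with $\xi_i$ and $\eta$, so that fact does nothing to bound $\length(\gamma_i)$. Second, and just as serious, the lemma requires each $\tilde\gamma_i$ to have simplicial length at most $Z(v,D)$, but your $\gamma_1$ retains the long sub-arc $\gamma|_{[0,s_1]}$, whose simplicial approximation has no a priori bound. The ad hoc iteration you propose at the end (``iterating this procedure on any factor whose approximation still exceeds $Z$'') is circular: each iteration returns you to a curve for which you have exactly the same problem, and the needed uniform length drop of $\tilde C r_h/2$ is not preserved under the rebasing you describe.

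The paper's proof avoids both problems by taking the \emph{opposite} wedge decomposition: a fine partition $0=s_0<s_1<\dots<s_k=1$ of $[0,1]$ with every arc $\gamma([s_j,s_{j+1}])$ of length at most $\tilde C r_h/4$, and $\gamma_j=\sigma_j\cup\gamma([s_j,s_{j+1}])\cup(-\sigma_{j+1})$ with $\sigma_j$ a \emph{minimizing} geodesic from $p$ to $\gamma(s_j)$. Because $\sigma_j$ is minimizing, its simplicial approximation is controlled by Lemma~\ref{approx minimizing geodesic}, and because the middle arc is short, it lies in a single body or single annular region, so its approximation is controlled by Lemma~\ref{lm6} or Lemma~\ref{approx in neck}; this is what delivers $m(\tilde\gamma_j)\leq Z(v,D)$ for every piece. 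The length bound then comes from the key inequality $\length(\sigma_j)\leq\max\{\length(\gamma[0,s_j]),\length(\gamma[s_j,1])\}-\tilde C r_h/2$, proved by contradiction: if it failed, both arcs of $\gamma$ from $p$ to $\gamma(s_j)$ would be within $\tilde C r_h/2$ of minimizing, yet Lemma~\ref{length reduction}(2) (in Case~1) or Lemma~\ref{length reduction}(3) (in Case~2) would produce a strictly shorter path from $p$ to $\gamma(s_j)$, contradicting minimality of $\sigma_j$. Combined with $\length(\sigma_{j+1})\leq\min\{\length(\gamma[0,s_{j+1}]),\length(\gamma[s_{j+1},1])\}$, this yields $\length(\gamma_j)\leq\length(\gamma)-\tilde C r_h/2$. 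This indirect use of the length-reduction lemmas—as a source of contradiction against minimality of the connecting geodesics, not as a surgery you actually perform on $\gamma$—is the step you are missing.
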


\begin{proof}
Let $\ti{C}=\frac{1}{32\sqrt{1+2\cdot10^{-3}}}$ and $\ga(0)=p$. Let $0=s_0=s_{k+1}<s_1\dots<s_k=1$ be a partition of $[0,1]$ such that $\length(\gamma([s_j,s_{j+1}])\leq \tilde{C}\cdot r_h/4$. For each $j=1,2,\dots,k$, we connect $\gamma(0)$ and $\gamma(s_j)$ by a minimizing geodesic $\sigma_j$ and denote by $\gamma_j$ the loop $\sigma_j\cup \gamma([s_j,s_{j+1}])\cup(-\sigma_{j+1})$. With the same argument as in Lemma~ \ref{lm2}, $\gamma$ is homotopic to $\gamma_1\cup \gamma_2\cup\dots\cup \gamma_k$ through a homotopy with width bounded by $2\max_i(\length (\sigma_i))\leq 2D$.

By the assumption in Lemma \ref{lem2_c}, $\tilde{C}\cdot r_h/4 <\bar{r}_j^k$, hence $\gamma([s_j,s_{j+1}])$ is either in a body or in some $\overline{A_{\bar{r}_j^k, 2r_j^k} (x_j^k)}$. If $\gamma([s_j,s_{j+1}])$ is in a body, then it is homotopic to one edge $\ti{\ga}_{j,j+1}$ in $\Ga$ with width bounded by $9D$ . If $\gamma([s_j,s_{j+1}])$ is in some $\overline{A_{\bar{r}_j^k, 2r_j^k} (x_j^k)}$, then it is homotopic to a simplicial curve $\ti{\ga}_{j,j+1}$ in $\Ga$ with width bounded by $67D$ and the simplicial length of $\ti{\ga}_{j,j+1}$ is bounded by $\frac{20+2C(v,D)}{r_0(v,D)}+4$. By Lemma \ref{approx minimizing geodesic}, $\sigma_j$ is homotopic to a simplicial curve $\ti{\si}_j \su \Ga$  with width bounded by $67D$, and the simplicial length of $\ti{\si}_j$ is bounded by $(\frac{360+4C(v,D)}{r_0(v,D)}+8)\ti{N}(v,D)$. Define $\ti{\ga}_j=\ti{\si}_j \cup \ti{\ga}_{j,j+1} \cup \ti{\si}_{j+1}$. The simplicial length of $\ti{\ga}_j$ is bounded by
$$2 \cdot  (\frac{360+4C(v,D)}{r_0(v,D)}+8)\ti{N}(v,D)+ \frac{20+2C(v,D)}{r_0(v,D)}+4 \leq  (\frac{740+10C(v,D)}{r_0(v,D)}+20)\ti{N}(v,D) \leq Z(v,D). $$

Next, we prove that, for both case (1) and (2) in the statement of the lemma, we have
\begin{equation} \label{sigma j}
\length(\sigma_j) \leq \max\{\length(\gamma[0,s_j]), \length(\gamma[s_j,1])\}-\tilde{C}\cdot r_h/2,
\end{equation}
and
\begin{equation}\label{sigma j+1}
\length(\sigma_{j+1}) \leq \max\{\length(\gamma[0,s_{j+1}]), \length(\gamma[s_{j+1},1])\}-\tilde{C}\cdot r_h/2.
\end{equation}

In both cases, we will only show equation (\ref{sigma j}). Equation (\ref{sigma j+1}) can be proved in the same way.

Case~(1): Suppose that the inequality (\ref{sigma j}) fails, then both $\gamma([0,s_j])$ and $\gamma([s_j,1])$ has length bounded by $\length(\sigma_j)+\tilde{C}\cdot r_h/2=d(\gamma(0),\gamma(s_j))+\tilde{C}\cdot r_h/2$. Under the assumption of (1), one of $\gamma([0,s_j])$ and $\gamma([s_j,1])$ contains at least $11$ segments in $\overline{A_{\bar{r}_j^k, 2r_j^k} (x_j^k)}$. Suppose that $\gamma([0,s_j])$ does. Then by Lemma \ref{length reduction}(2), there is a curve $c$ connecting $\ga(0)$ and $\ga(s_j)$ such that
\begin{equation}
\length(c) \leq \length(\ga([0,s_j]))-\bar{r}_j^k \leq \length(\sigma_j)+\tilde{C}\cdot r_h/2-\bar{r}_j^k. \nonumber
\end{equation}
By assumption in Lemma \ref{lem2_c}, $\tilde{C}\cdot r_h/2 <\bar{r}_j^k$. Thus, we have $\length(c)<\length(\sigma_j)$, which contradicts to that $\sigma_j$ is minimizing.

Case~(2): Suppose that the inequality (\ref{sigma j}) fails, then both $\gamma([0,s_j])$ and $\gamma([s_j,1])$ has length bounded by $\length(\sigma_j)+\tilde{C}\cdot r_h/2=d(\gamma(0),\gamma(s_j))+\tilde{C}\cdot r_h/2$.  Under the assumption of (2), one of $\gamma([0,s_j])$ and $\gamma([s_j,1])$ contains a curve $\al$ that is approximated by a simplicial subcurve $\tilde{\al} \su \ti{\ga}$ such that the simplicial length of $\ti{\al}$ is bounded by $50\ti{N}^3(\frac{20+2C(v,D)}{r_0(v,D)}+4)+5\ti{N}^2$.  Assume that $\al \su\gamma([0,s_j])$.  Since there are at most $\ti{N}^2$ edges in $\Ga$, there is an edge $\ti{\al}_i \su \ti{\al}$ which appears at least $21\ti{N}(\frac{20+2C(v,D)}{r_0(v,D)}+4)+5$ times.  However, there are no more than $21$ segments of the partition $\mathcal{P}$ in each $\overline{A_{\bar{r}_j^k, 2r_j^k} (x_j^k)}\subset \Nn_j^{k+1}$ and each segment is appoximated by a simplicial subcurve in $\ti{\ga}$ with simplicial length bounded by $\frac{20+2C(v,D)}{r_0(v,D)}+4$. The number of necks is bounded by $\ti{N}$. Hence, there is a $\ga'_i$ in the partition $\mathcal{P}$, such that $\ga'_i \su \al \su \gamma([0,s_j])$ and $\ga'_i$ is in body. Moreover, $\ga'_i$ is approximated by a simplicial curve with an edge that appears at least $5$ times. Then by Lemma \ref{length reduction}(3), there is a curve $c$ connecting $\ga(0)$ and $\ga(s_j)$ such that
\begin{equation}
\length(c) \leq \length(\ga([0,s_j]))-\tilde{C}\cdot r_h \leq \length(\sigma_j)+\tilde{C}\cdot r_h/2-\tilde{C}\cdot r_h. \nonumber
\end{equation}
Thus, we have $\length(c)<\length(\sigma_j)$, which, again, leads to a contradiction.

Therefore, we have proved that the length of $\gamma_j=\sigma_j\cup \gamma([s_j,s_{j+1}])\cup(-\sigma_{j+1})$ is always bounded by $\length(\gamma)-\tilde{C}\cdot r_h/2$.
\end{proof}

Finally, we show that during the homotopy, one can break the curve into several small curves while the total width of the homotopy can be still controlled.

\begin{lemma}\label{lm8}
Let $\ga:[0,1]\ra M$ be a closed curve and $\ga(0)=\ga(1)=p$. Suppose that $\ga=\vee_{i=1}^n \al_i$, where each $\al_i$ is a closed curve with base point $p$. If each $\al_i$ can be contracted to a point in $M$ through a homotopy with width bounded by $W_i$, then there exists a homotopy $H(s,t)$ such that $H(s,0)=\ga$ and $H(s,1)=p$. The width $\om_H$ of this homotopy is bounded by $2\cdot \max_i W_i$.
\end{lemma}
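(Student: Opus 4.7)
The plan is to contract the loops $\al_1,\dots,\al_n$ one at a time, using a lollipop reparametrization that keeps the wedge point $p$ pinned down during each phase. That way the $\al_j$ with $j\neq i$ can be held genuinely fixed while $\al_i$ is being contracted, and the widths will be controlled by the maximum of the $W_i$ rather than by their sum.

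For each $i$, I will let $\tau^i(t):=H^i(0,t)=H^i(1,t)$ denote the trajectory of the basepoint $p$ under the given contraction of $\al_i$; then $\tau^i$ is a curve from $p$ to some point $p^*_i$ with $\length(\tau^i)\leq W_i$ by the definition of width. Reparametrize $\ga$ on $[0,1]$ so that $\al_i$ occupies the subinterval $[(i-1)/n,\,i/n]$, and build $H:[0,1]\times[0,1]\to M$ in $n$ consecutive phases, where phase $i$ occupies $t\in[(i-1)/n,\,i/n]$ and alters only the region $s\in[(i-1)/n,\,i/n]$; the other regions remain at whatever they were at the start of the phase. Split phase $i$ into two equal sub-phases, with sub-phase times $t_A,t_B\in[0,1]$. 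In sub-phase A, replace the piece of the loop lying in $[(i-1)/n,\,i/n]$ by a reparametrization of the lollipop
$$\tau^i|_{[0,t_A]}\cdot H^i(\cdot,t_A)\cdot (\tau^i|_{[0,t_A]})^{-1}.$$
At $t_A=0$ this is $\al_i$, at $t_A=1$ it is the degenerate lollipop $\tau^i\cdot(\tau^i)^{-1}$, and it is a loop based at $p$ throughout, so the wedge point is preserved and the neighboring $\al_{i\pm1}$ stay undisturbed. In sub-phase B, fold $\tau^i\cdot(\tau^i)^{-1}$ down to $p$ via the obvious retraction $t_B\mapsto \tau^i|_{[0,1-t_B]}\cdot (\tau^i|_{[0,1-t_B]})^{-1}$, the middle point being dragged to $\tau^i(1-t_B)$ as the tether shrinks. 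By the end of phase $i$ the entire $\al_i$-region is the constant point $p$, and subsequent phases handle $\al_{i+1},\dots,\al_n$ identically.

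For the width bound, observe that any parameter point $s\in[(i-1)/n,\,i/n]$ is completely stationary during every phase $j\neq i$: before phase $i$ because $\al_i$ has not yet been touched, and after phase $i$ because it has already been collapsed to $p$. During phase $i$ itself, $s$ lies in one of the tether, middle, or reverse-tether portions of the lollipop. In sub-phase A, a tether or reverse-tether point traces a segment of $\tau^i$ (length $\leq W_i$) and a middle point at relative position $s'$ traces $H^i(s',\cdot)$ (length $\leq W_i$). In sub-phase B, a tether or reverse-tether point retraces a segment of $\tau^i$, and the now-constant middle point follows $\tau^i$ in reverse, each again of length $\leq W_i$. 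Summing the two sub-phases, $s$ travels a total distance of at most $2W_i$, so $\om_H\leq 2\max_i W_i$.

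The only genuinely fussy step will be writing down an explicit continuous parametrization of the growing and then shrinking lollipop so that each slice $H(\cdot,t)$ really is a single loop on $[0,1]$ and each parameter point's trajectory matches one of the three cases above; once that bookkeeping is in place the width bound is immediate.
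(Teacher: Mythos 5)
Your argument is correct and uses the same central device as the paper: the lollipop reparametrization $\tau^i|_{[0,t]}\cdot H^i(\cdot,t)\cdot(\tau^i|_{[0,t]})^{-1}$, which keeps $p$ fixed and costs at most $2W_i$ in width (one $W_i$ to grow the lollipop, one $W_i$ to retract the tether $\tau^i$). The only cosmetic difference is scheduling: the paper runs both half-steps simultaneously over all indices $i$, which already gives width $\max_i W_i$ per half-step since each parameter value $s$ sits in a single wedge factor, so your one-summand-at-a-time version arrives at the same $2\max_i W_i$ bound by an equivalent route.
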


\begin{proof}
Let us denote by $H_i(s,t)$ the homotopy contracting each $\al_i$ and let $p_i=H_i(s,1)$. By our assumption, the curves $\al_i$ have a common base point $\al_i(0)=p$. Let $\si_i=H_i(0,\cdot):[0,1]\ra M$ be the trajectory of $p$ in the homotopy $H_i$. We first homotope the curve $\ga=\vee_{i=1}^n \al_i$ to $\cup_{i=1}^n \si_i\cup (-\si_i)$ through the curves $\cup_{i=1}^n \si_i([0,t])\cup H_i([0,1],t)\cup (-\si_i([0,t]))$, for $t\in[0,1]$. The width of this homotopy is bounded by $\max_i W_i$. Then we contract $\cup_{i=1}^n \si_i\cup (-\si_i)$ to the base point $p$. The total width of this homotopy is bounded by $2\cdot \max_i W_i$.
\end{proof}

\section{Width of the homotopy and length of the shortest closed geodesic}
In this section, we will prove our main results Theorem~\ref{thm1_l} and Theorem~\ref{thm2_w}. We will prove Theorem~\ref{thm2_w}A and B separately, and then we use the result of Theorem~\ref{thm2_w} to show Theorem~\ref{thm1_l}.

Recall that Theorem~\ref{thm2_w}A states that if $M\in\Mm(4,v,D)$, any closed curve $\ga\su M$ can be contracted to a point through a homotopy $H$ with width $\om_H\leq \Omega(v,D)$, where $\Omega$ is a function which only depends on volume $v$ and diameter $D$.

\begin{proof}[Proof of Theorem~\ref{thm2_w}A]
Given a four dimensional manifold $M$ satisfies the above conditions, we first construct a finite covering $\mathcal{O}$ of $M$ as in Lemma~\ref{lem2_c} and a  graph $\Si$ from this covering as it is in the beginning of the Section~\ref{sec3}.

Let $\ga:[0,1]\ra M$ be any closed curve. By Lemma~\ref{lm1}, there is a piecewise loop $\al\su \Si$ such that $\ga$ is homotopic to $\al$ through a homotopy $H_1$ with $\om_{H_1}\leq 60D$.

Since the simplicial length $m(\al)$ of $\al$ maybe unbounded in terms of $v$ and $D$, our second step is to apply Lemma~\ref{lm2} to break $\al$ into
$m(\al)$ many small curves so that the simplicial length of each small curve is no more than $2\ti{N}^2+1$, where $\ti{N}=\ti{N}(v,D)$ is the number of the balls in the covering of $M$.

In fact, by Lemma~\ref{lm2}, the curve $\al$ is homotopic to $\al'=\al_1\cup\dots\cup\al_{m(\al)}$ through a homotopy $H_2$ such that $m(\al_i)\leq 2\ti{N}^2+1$, for all $i$, and the width $\om_{H_2}\leq 12\ti{N}^2D$. Let $p\in \cap_i \al_i$ be a vertex in $\Si$. Because $M$ is simply-connected, by Lemma~\ref{lm8}, if each curve $\al_i$ can be contracted to a point $p_i\in M$ through a homotopy with width bounded by some function $W$, then the curve $\al'$ can be contracted to the vertex $p$ through a homotopy with width bounded by $2W$.

In order to contract $\al_i$, we apply Lemma~\ref{lm4} to find a simplicial approximation $S_i$ of $\al_i$ in the nerve $\Nn(M)$. By Lemma~\ref{lm4} the 1-chain $S_i$ is contractible in $M$, hence also contractible in $\Nn(M)$ and the number of the 1-simplices in $S_i$ is bounded by $2\ti{N}^2+1$.

We then apply Lemma~\ref{lm5} to control the width of the homotopy contracting $S_i$ in $\Nn(M)$. In fact, because the number of the vertices in $\Nn(M)$ is $\ti{N}$, which, by Lemma~\ref{lem2_c}, is a constant that only depends on the volume bound $v$ and diameter bound $D$ of $M$, the number of the possible intersections of the balls is bounded by a function of $\ti{N}$. Therefore, the function $F$ in Lemma~\ref{lm5} is a function $F(\ti{N})=F(v,D)$.

The simplicial homotopy contracting $S_i$ can be realized through a sequence of closed simplicial curves $\{\si_i\}$ such that $\si_1=S_i$ and $\si_{i+1}-\si_i$ is the boundary of a 2-simplex. Therefore, there are at most $F(\ti{N})$ many such curves in the sequence.

Now each curve $\si_i$ can be realized by a loop $\be_i$ in $\Si$ by connecting the corresponding vertices. We then get a sequence of  loops $\{\be_i\}$ in $\Si$ such that any two consecutive curves $\be_i$ and $\be_{i+1}$ satisfies the condition in Lemma~\ref{lm3}, because the corresponding $\si_i$ and $\si_{i+1}$ are only differed by the boundary of some $2-$simplex. By Lemma~\ref{lm3}, $\be_i$ is homotopic to $\be_{i+1}$ through a homotopy with width bounded by $66D$. Hence we conclude that $\al_i$ can be contracted to a point through a homotopy with width bounded by $66 F(\ti{N})D$.

Finally, by connecting the homotopies $H_1$, $H_2$ and the homotopy contracting $\al'$, we obtain that our original curve $\ga$ can be contracted to a point in $M$ through a homotopy with width bounded by a function $\OM(v,D)$.
\end{proof}

If we assume that there is no closed geodesic on $M$ of which the length is less than $4D$, we may improve the above construction to get an expression of $\OM(v,D)$ in terms of $v$, $D$ and the function $\ti{N}(v,D)$ in Lemma~\ref{lem2_c}. Note that in this case, if we apply certain curve shortening algorithm to a curve of length shorter than $4D$, we are able to contract the curve to a point. Let us first introduce the following notation.

\begin{definition}
Let $\al:[0,1]\ra M$ be a closed contractible curve in a Riemannian manifold $M$, and $H(s,t):[0,1]\z[0,1]\ra M$ a homotopy contracts $\al$ with $H(s,0)=\al(s)$ and $H(s,1)=point\in M$. For any $0\leq t_1\leq t_2\leq 1$ We define
$$\om_H([t_1,t_2])=\sup_{s\in[0,1]} \length(H(s,[t_1,t_2])).$$
\end{definition}

\begin{proof}[Proof of Theorem~\ref{thm2_w}B]
Let $\ga:[0,1]\ra M$ be a closed curve in $M$. Assume that there is no closed geodesic on $M$ of which the length is less than $4D$. We are going to show that the curve $\ga$ can be contracted to a point through a homotopy with width bounded by a function of $v$, $D$ and $\ti{N}$.

Let $\ti{\ga}$ be the approximation of $\ga$ as it is in Lemma~\ref{length reduction} (1). Let
$$Z(v,D)=100\ti{N}^3(\frac{20+2C(v,D)}{r_0(v,D)}+4)+10\ti{N}^2$$
be the same constant as in Lemma~\ref{lm7}. If $m(\ti{\ga})\geq Z(v,D)$, we first apply Lemma~\ref{lm7} to homotope $\ga$ to a loop $\ga_1\vee\dots\vee\ga_{k}$ through a homotopy with width bounded by $2D$, where $k$ is a positive integer and each $\ga_i$ is a loop with length $\leq 4D$.

By Lemma~\ref{lm8}, if each $\ga_i$ can be contracted to a point through a homotopy with width $\om_i$, then one can contract $\ga$ to a point through a homotopy with width bounded by $\max_i 2\om_i$. In our construction below, all curves $\ga_i$ will be contracted in the same way. Therefore, without lost of generality, let us consider the contraction of the curve $\ga_1$. The contraction of $\ga_1$ will be constructed in two steps.

We first construct a family of curves $\{\ga_{a_1\dots a_n}\}$ parameterized by a finite tree $\Tt$ associate to the curve $\ga_1$. We will then choose, from this family $\{\ga_{a_1\dots a_n}\}$, a bounded number of curves to construct a homotopy that contracts $\ga_1$.

The family  $\{\ga_{a_1\dots a_n}\}$ we are going to construct satisfies the following properties:
\begin{enumerate}
\item The family of the curves $\{\ga_{a_1\dots a_n}\}$ is parameterized by a finite tree $\Tt$ in the following way. The root of the tree $\Tt$ is identified with the curve $\ga_1$. Each vertex of $\Tt$ corresponds to a curve $\ga_{a_1\dots a_n}$. For the index $a_1a_2\dots a_n$, the curve $\ga_{a_1\dots a_n}$ is a child of the curve $\ga_{a_1\dots a_{n-1}}$ in $\Tt$.
\item For each curve $\ga_{a_1\dots a_n}$, there is an associated base point $p_{a_1\dots a_n}$ and a homotopy $H_{a_1\dots a_n}$ such that:
\begin{enumerate}
\item If $\ga_{a_1\dots a_{n}}$ has only one child $\ga_{a_1\dots a_n,1}$, then $\ga_{a_1\dots a_n}$ is homotopic to $\ga_{a_1\dots a_n,1}$ through $H_{a_1\dots a_n}$ with width bounded by $138 D$.
\item If $\ga_{a_1\dots a_{n-1}}$ has $k$ children $\{\ga_{a_1\dots a_{n-1},i}\}_{i=1}^k$, where $k\geq 2$, then the curves in $\{\ga_{a_1\dots a_{n},i}\}_{i=1}^k$ have a common base point $p_{a_1\dots a_n,1}=\dots=p_{a_1\dots a_n,k}$ and $\ga_{a_1\dots a_{n-1}}$ is homotopic to the wedge $\vee_{i=1}^k \ga_{a_1\dots a_{n-1},i}$ through $H_{a_1\dots a_n}$ with width bounded by $138D$.
\item If $\ga_{a_1\dots a_n}$ has no children, then it is a point curve in $M$.
\end{enumerate}
\item For each curve $\ga_{a_1\dots a_n}$, there is an approximation $\ti{\ga}_{a_1\dots a_n}$ in $\Ga$ which is homotopic to $\ga_{a_1\dots a_n}$ with width bounded by $67D$. The length of the curve $\ga_{a_1\dots a_n}$ is bounded by $4D$ and the simplicial length of the curve $\ti{\ga}_{a_1\dots a_n}$ is bounded by $Z(v,D)$.
\end{enumerate}
\begin{claim}
We claim that for a curve $\ga_1$, if there exits a family of curves $\{\ga_{a_1\dots a_n}\}$ satisfies the above $(1)-(3)$, then $\ga_1$ can be contracted to a point through a homotopy with width bounded by $\OM (v,D)=414 D\cdot((2(\ti{N}^2+1)^{Z}+1)$.
\end{claim}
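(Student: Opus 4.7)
The plan relies on a pigeonhole argument on the finite set of simplicial approximations in $\Ga$. Set $K := (\ti{N}^2+1)^{Z}$: since $\Ga$ has at most $\ti{N}^2$ edges and each approximation $\ti{\ga}_{a_1\dots a_n}$ has simplicial length $\leq Z$ by property (3), there are at most $K$ distinct simplicial curves in $\Ga$ that can appear as approximations of curves in the family.

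The key observation is a shortcutting principle. If two curves $\ga_{a}$ and $\ga_{b}$ in the family admit the same simplicial approximation $\ti{\eta}\in\Ga$, then by property (3) each is homotopic to $\ti{\eta}$ through a homotopy of width at most $67D$, so $\ga_a$ and $\ga_b$ are themselves homotopic with width at most $134D$. Consequently, whenever a previously seen approximation reappears along a branch of $\Tt$, the intervening tree steps can be collapsed into a single shortcut homotopy of width $\leq 134D$, discarding the portion of the tree in between.

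Next I assemble the contraction of $\ga_1$ by traversing $\Tt$ from the root, using the property (2)(a) homotopy at every single-child vertex (width $\leq 138D$) and Lemma~\ref{lm8} at every wedge vertex, while invoking the shortcutting principle each time an approximation is revisited. After this pruning, any root-to-leaf path encounters at most $K$ distinct approximations and therefore at most $2K+1$ transitions when tree edges and shortcuts are alternated. Each transition contributes at most $138D$ to the width, and combining them via composition at single-child vertices and Lemma~\ref{lm8} at wedge vertices will yield the stated bound $\OM(v,D)=414D\,(2K+1)$.

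The main obstacle will be controlling the factor of $2$ introduced by Lemma~\ref{lm8} at each wedge: a naive recursive contraction along a tree of depth $d$ produces width bounded only by $138D\,(2^{d}-1)$, which is exponential in $K$. To achieve the desired \emph{linear} dependence on $K$, the shortcutting must be used not only to bound the depth along each branch but also to \emph{flatten} nested wedges — essentially, whenever children of a wedge share approximations with earlier curves on the contraction, they must be routed through the common approximation rather than contracted independently by a fresh recursive invocation of Lemma~\ref{lm8}. Making this bookkeeping precise, so that Lemma~\ref{lm8} is effectively applied $O(1)$ times and the total width stays of order $K\cdot D$, is the delicate heart of the argument.
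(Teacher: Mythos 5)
There is a genuine gap, and you identify it yourself but do not close it. Your pigeonhole count $K=(\ti{N}^2+1)^Z$ and your shortcutting principle (same approximation $\Rightarrow$ homotopic with width $\leq 134D$) both match the paper. The trouble is exactly where you flag it: if you contract the tree by applying Lemma~\ref{lm8} recursively at each wedge vertex, every level multiplies the width by $2$, giving $O(2^h)$ rather than $O(h)$, and the pigeonhole argument only controls $h$, not $2^h$. Your suggested fix --- using the shortcuts also to ``flatten nested wedges'' so that Lemma~\ref{lm8} is invoked only $O(1)$ times --- is a hope rather than a proof, and it is not how the paper proceeds.

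The paper decouples the two issues. First it shows, using only properties (1) and (2), that $\ga_1$ can be contracted with width bounded by $414Dh$, \emph{linearly} in the tree height $h$. This is achieved not by recursive invocation of Lemma~\ref{lm8} but by a single level-by-level sweep: one homotopes $\ga_1$ through the sequence of curves $\si_1\cup(\vee_i\ga_{1i})\cup(-\si_1)$, then $\si_1\cup\{\cup_i(\si_{1i}\cup(\vee_j\ga_{1ij})\cup(-\si_{1i}))\}\cup(-\si_1)$, and so on down to $\cup(\si_{a_1\dots a_n}\cup(-\si_{a_1\dots a_n}))$, where $\si_{a_1\dots a_n}$ is the base-point trajectory of $H_{a_1\dots a_n}$ and has length $\leq 138D$. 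Because property (2)(b) forces all children of a wedge vertex to share a common base point, these trajectories concatenate rather than multiply: each transition from level $k$ to level $k+1$ contributes width $\leq 276D$, and the final contraction of $\cup(\si\cup(-\si))$ to a point contributes $\leq 138Dh$, giving $276Dh + 138Dh = 414Dh$ in total. Only after this linear-in-$h$ bound is in hand does the paper use your pigeonhole/shortcutting argument, but purely as a \emph{tree surgery}: whenever $\ti{\ga}_{a_1\dots a_n}=\ti{\ga}_{b_1\dots b_k}$ along a branch with $k\geq n+2$, the intervening vertices are excised and replaced by a single $138D$-width edge, which strictly reduces the height of some subtree; iterating yields a new tree still satisfying (1)--(3) with $h\leq 2K+1$. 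Plugging in gives $414D(2K+1)$. So the missing idea in your proposal is the level-by-level sweep that replaces recursive use of Lemma~\ref{lm8}; without it the bound you reach is exponential in $K$, not linear.
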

\begin{proof}[Proof of the claim.]
Let $h$ denote the height of the tree $\Tt$. We first show that if there is a family of the curves satisfies the above $(1)$ and $(2)$, then $\ga_1$ can be contracted to a point through a homotopy with width bounded by $414Dh$. Then we will use $(3)$ to show that one can always form a new tree $\Tt'$ from $\Tt$ such that the above $(1)-(3)$ hold and the height of the tree $\Tt'$ is bounded by $2(\ti{N}^2+1)^{Z}+1$.

The construction of the homotopy is similar to the proof of Lemma~\ref{lm8}.
If $\gamma_{a_1 \dots a_{n-1}}$ has only one child $\gamma_{a_1 \dots a_{n-1},1}$, let $\sigma_{a_1 \dots a_{n-1}}$ be the trajectory of $p_{a_1 \dots a_{n-1},1}$ under the homotopy $H_{a_1 \dots a_{n-1}}$, then the curve $\gamma_{a_1 \dots a_{n-1}}$ is homotopic to $\sigma_{a_1 \dots a_{n-1}} \cup \gamma_{a_1 \dots a_{n-1},1} \cup -\sigma_{a_1 \dots a_{n-1}}$ through a homotopy with width bounded by $2\cdot 138D$, since the length of $\sigma_{a_1 \dots a_{n-1}}$ is bounded by $138D$. (See Figure~\ref{fig12}.)

\begin{figure}[htbp]
\centering\includegraphics[width=7cm]{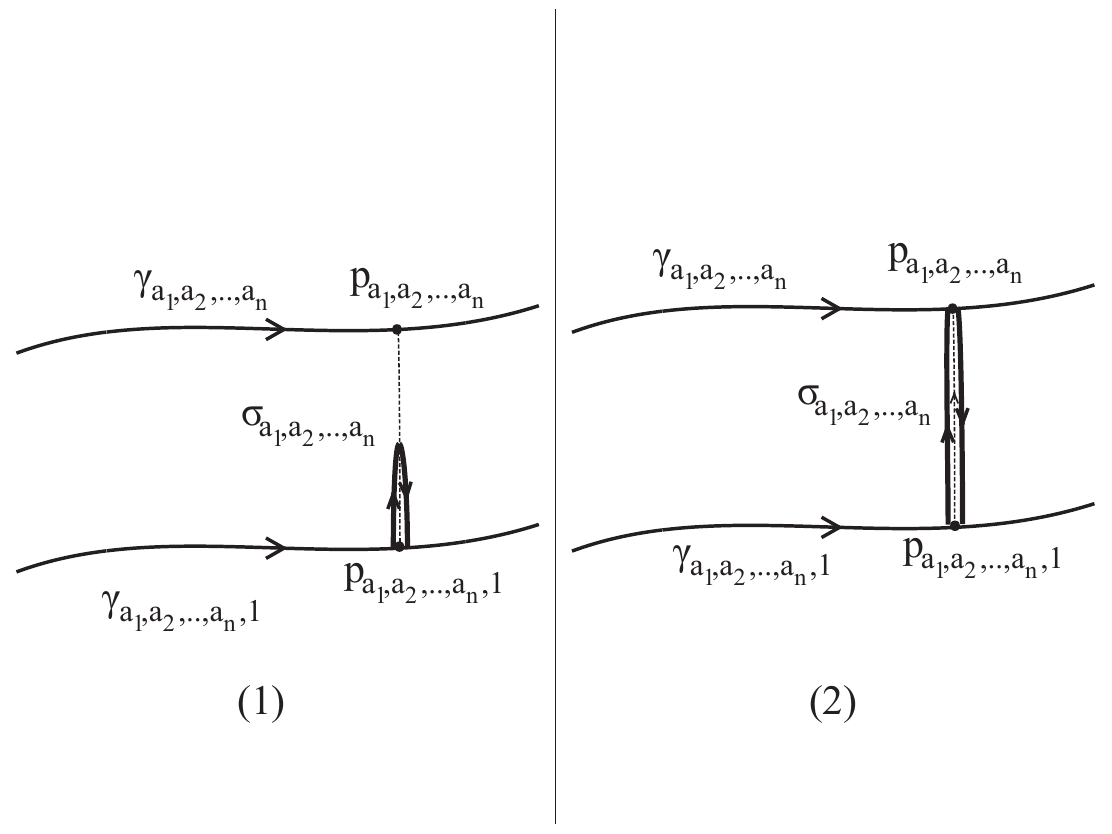}
\vspace*{6pt}
\caption{Homotope $\gamma_{a_1 \dots a_{n-1},1}$ to $\sigma_{a_1 \dots a_{n-1}} \cup \gamma_{a_1 \dots a_{n-1},1} \cup -\sigma_{a_1 \dots a_{n-1}}$ }\label{fig12}
\end{figure}

Similarly, if $\gamma_{a_1 \dots a_{n-1}}$ is homotopic to $\gamma_{a_1 \dots a_{n-1},1} \vee  \dots  \vee \gamma_{a_1 \dots a_{n-1},k}$, then $\gamma_{a_1 \dots a_{n-1}}$ is homotpic to $\sigma_{a_1 \dots a_{n-1}} \cup (\gamma_{a_1 \dots a_{n-1},1} \vee  \dots  \vee \gamma_{a_1 \dots a_{n-1},k}) \cup -\sigma_{a_1 \dots a_{n-1}}$ through a homotopy with width bounded by $2\cdot 138D$, where $\sigma_{a_1 \dots a_{n-1}}$ is the trajectory of $p_{a_1 \dots a_{n-1}}$ under $H_{a_1 \dots a_{n-1}}$.

Therefore, $\gamma_1$ is homotopic to the curves $\sigma_1\cup (\vee_i \ga_{1i})\cup (-\si_1)$, $\si_1\cup \{\cup_i(\si_{1i} \cup (\vee_j\ga_{1ij})\cup (-\si_{1i}))\}\cup (-\si_1)$,\dots, and $\cup(\sigma_{a_1 \dots a_n} \cup -\sigma_{a_1 \dots a_n})$. (See Figure~\ref{fig13}, \ref{fig14})

\begin{figure}[htbp]
\begin{minipage}[c]{0.47\textwidth}
\centering\includegraphics[width=7cm]{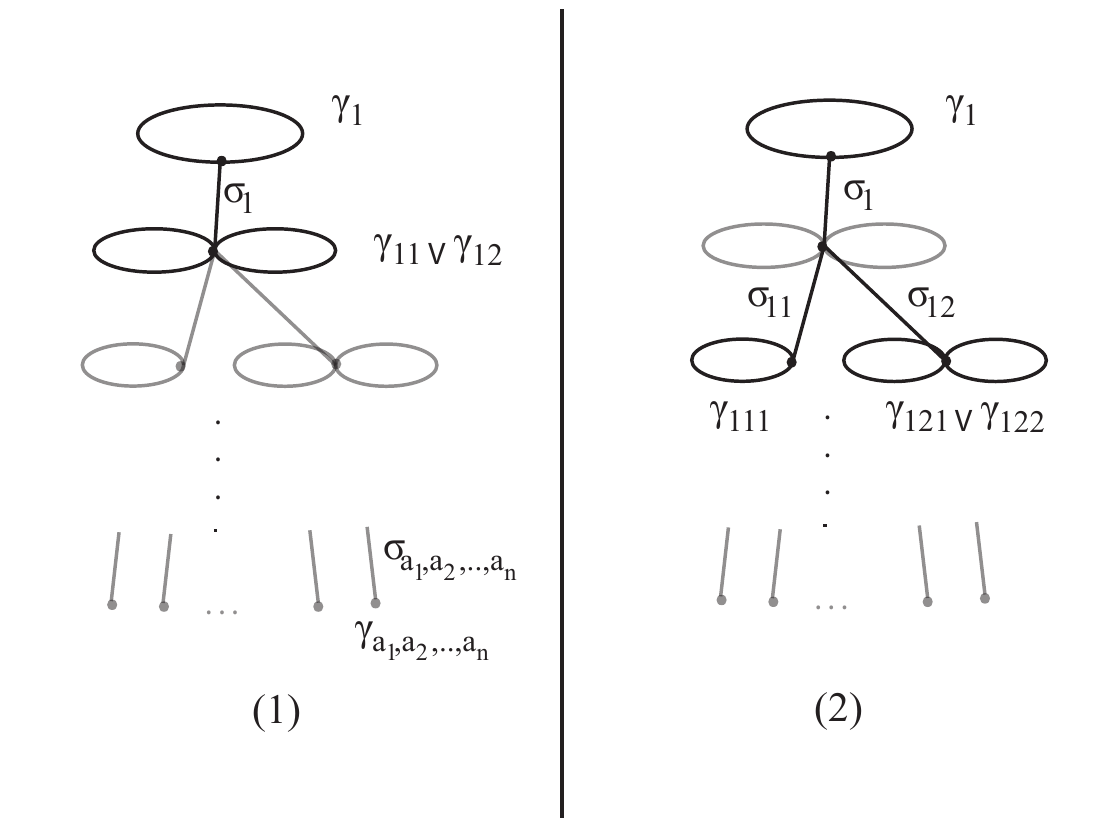}
\vspace*{6pt}
\caption{$\ga_1$ is homotopic to (1) $\sigma_1\cup (\vee_i \ga_{1i})\cup (-\si_1)$, (2) $\si_1\cup \{\cup_i(\si_{1i} \cup (\vee_j\ga_{1ij})\cup (-\si_{1i}))\}\cup (-\si_1)$}\label{fig13}
\end{minipage}
\begin{minipage}[c]{0.47\textwidth}
\centering\includegraphics[width=7cm]{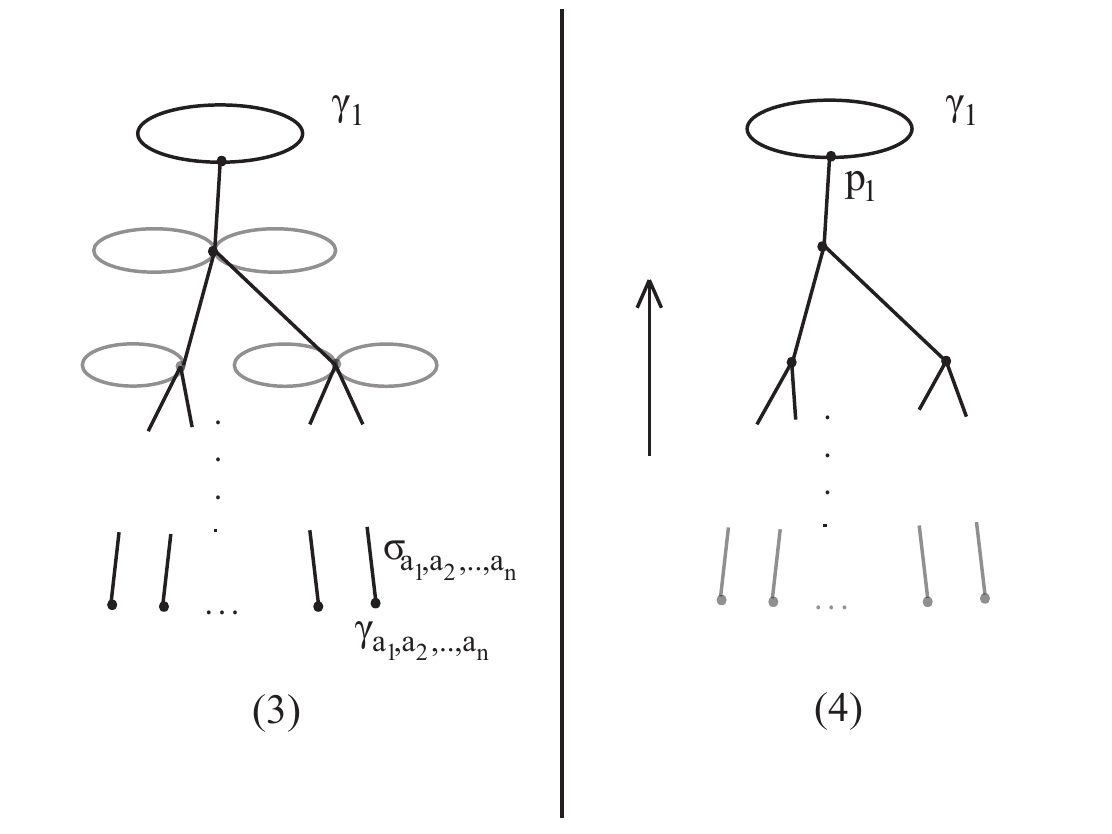}
\vspace*{6pt}
\caption{$\ga_1$ is homotopic to (3) $\cup(\sigma_{a_1 \dots a_n} \cup -\sigma_{a_1 \dots a_n})$, (4) $p_1$}\label{fig14}
\end{minipage}
\end{figure}

The width of a homotopy between $\ga_1$ and $\cup(\sigma_{a_1 \dots a_n} \cup -\sigma_{a_1 \dots a_n})$ is bounded by $276Dh$, where $h$ is the height of $\Tt$. We then contract $\cup(\sigma_{a_1 \dots a_n} \cup -\sigma_{a_1 \dots a_n})$ to $p_1$ by contracting every pair $\sigma_{a_1 \dots a_n} \cup -\sigma_{a_1 \dots a_n}$. The width of this homotopy is bounded by $138D h$. Now by combining the above homotopies together, we obtain a homotopy that contracts $\ga_1$ to $p_1$ with width bounded by $414Dh$.

Let us now construct a new tree such that the height of the tree is bounded by $2(\ti{N}^2+1)^{Z}+1$. The idea is that since, by our construction in Lemma~\ref{lm6}, $\ga_{a_1\dots a_n}$ is homotopic to its approximation $\ti{\ga}_{a_1\dots a_n}$, if two curves $\ga_{a_1\dots a_n}$, $\ga_{b_1\dots b_k}$, where $k> n$, in the family have the same approximation curve $\ti{\ga}_{a_1\dots a_n}=\ti{\ga}_{b_1\dots b_k}$, then we may homotope $\ga_{a_1\dots a_n}$ to $\ga_{b_1\dots b_k}$ through a homotopy with width bounded by $2 \cdot 67D=134D$. In this case, we can form a new tree by connecting the vertex $\ga_{a_1\dots a_n}$ with $\ga_{b_1\dots b_k}$, and delete the vertices in between. Note that the base point condition in $(2)$ may not be satisfied in this situation. However, since the length of the curve $\ga_{b_1\dots b_k}$ is bounded by $4D$, we may homotope the image of $p_{a_1\dots a_n}$ along the curve $\ga_{b_1\dots b_k}$ to $p_{b_1\dots b_k}$. The width of this homotopy is bounded by $4D$. Then in total, the width is bounded by $134D+4D=138D$.

Note that in the {graph}  $\Ga$, the total number of the edges is bounded by $\ti{N}^2$ and hence the number of the curves in $\Ga$ with simplicial length bounded by $Z(v,D)$ is bounded by $N_0:=(\ti{N}^2+1)^{Z}$. If the height of the tree $\Tt$ is greater than $2N_0+1$, then there exits $\ga_{a_1\dots a_n},\ga_{b_1\dots b_k}$ such that $k\geq n+2$ and $\ti{\ga}_{a_1\dots a_n}=\ti{\ga}_{b_1\dots b_k}$. In this case, we replace the subtree with root $\ga_{a_1\dots a_n}$ by connecting $\ga_{a_1\dots a_n}$ with $\ga_{b_1\dots b_k}$ followed with the subtree with root $\ga_{b_1\dots b_k}$. Note that in this case the height of the new subtree with root $\ga_{a_1\dots a_n}$ is reduced by at least one. If the height of the tree is greater than $2N_0+1$, one can always apply the above algorithm to reduce the height of a subtree by at least one. Since $\Tt$ has only finitely many subtrees, after finitely many steps, the height of the tree is decreased by at least one. Therefore, we conclude that the height $h$ can bounded by $2N_0+1$ and hence the width of the homotopy that contracts $\ga_1$ is bounded by $414D\cdot(2N_0+1)$.
\end{proof}

In the rest of the proof, we are going to construct the family of the curves $\{\ga_{a_1\dots a_n}\}$ which is parameterized by a tree $\Tt$ that satisfies the above properties. The idea of this construction is to apply curve shortening to the curve $\ga_1$ and we apply Lemma~\ref{lm7} to get a bouquet of circles when (3) is not satisfied. This family will be constructed inductively.

We apply Birkhoff curve shortening process for free loops (BPFL) to the curve $\ga_1:[0,1]\ra M$. (See \cite{birknoff1960dynamical}, \cite{croke1988area} or \cite{nabutovsky2013length} for detailed discussion about Birkhoff curve shortening process). Recall that during the BPFL, we first take a partition of $0=s_0=s_{n+1}<s_1<s_2\dots<s_n=1$ of $[0,1]$ such that for every $j$, $\ga_1([s_j,s_{j+1}])$ is contained in a half of the injectivity radius at $\ga_1(s_j)$. We join the consecutive midpoints of the arc $\ga_1([s_j,s_{j+1}])$ by a unique minimizing geodesic and obtain a closed piecewise geodesic $\ga_1'$. Then there is a length non-increasing homotopy from $\ga_1$ to $\ga_1'$. And then we apply the same process to $\ga_1'$.

Eventually, the curve $\ga_1$ will either converge to a closed geodesic or a point in $M$. By our assumption on the length of the closed geodesic, the first case is impossible, hence BPFL induces a contraction of $\ga_1$. However, it is worth to note that the width of this contraction may not be bounded by any function of $v$ and $D$.

We denote by $H:[0,1]\z[0,1]\ra M$ the contraction of $\ga_1$ obtained by BPFL such that $H(s,0)=\ga_1(s)$ and $H(s,1)$ is a point in $M$. For a sufficiently large $n$, we take a partition $0=t_0<t_1\dots<t_n=1$ of the second interval of the domain of $H$ such that when $j>1$, the width $\om_H(t_j,t_{j+1})\leq D$. We denote by $\ga_1^j$ the curve $H(\cdot,t_j)$. Let $\ti{\ga}_1^j$ be the approximation of $\ga_1^j$ in $\Ga$ in Lemma~\ref{length reduction} (1) with width of the homotopy bounded by $67D$.

When $j=0$, by our assumption, the simplicial length $m(\ti{\ga}_1^0)$ is bounded by $Z(v,D)$. Let $j_1$ be the first index such that $m(\ti{\ga}_{1}^{j_1})>Z$. In this case, we apply Lemma~\ref{lm7} to homotope $\ga_{1}^{j_1}$ to the wedge of the curves $\ga_{11}^{j_1}\vee\dots\vee\ga_{1k}^{j_1}$ such that:

\begin{enumerate}
\item The width of this homotopy is bounded by $2D$.
\item The length of each $\ga_{1i}^{j_1}$ is bounded by $\length(\ga_1^{j_1})-\frac{1}{32\sqrt{1+2\cdot10^{-3}}}\cdot r_h$, where $r_h$ is the harmonic radium of $M$.
\item  Each $\ga_{1i}^{j_1}$ can be homopoted to a simplicial curve $\ti{\ga}_{1i}^{j_1} \su \Ga$ with width bounded by $15D$. The simplicial length of each $\ti{\ga}_{1i}^{j_1}$ is bounded by $Z(v,D)$.
\end{enumerate}

Note that this furthur implies that the curve $\ga_{1}^{j_1-1}$ is homotopic to $\ga_{11}^{j_1}\vee\dots\vee\ga_{1k}^{j_1}$ with width bounded by $D+2D=3D$. Now we pick the first $j_1-1$ curves $\ga_{\underbrace{1\dots1}_i}=\ga_1^{i}$, for $i=1,2,\dots,j_1-1$. And for $l=1,2,\dots,k$, set $\ga_{1\dots1l}=\ga_{1l}^{j_1}$ and $\ti{\ga}_{1\dots1l}=\ti{\ga}_{1l}^{j_1}$. We then apply the same construction to each $\ga_{1\dots1l}$. Eventually, we are going to obtain a family of curves $\{\ga_{a_1\dots a_n}\}$ which is parameterized by a tree $\Tt$ satisfies the above conditions.

It remains to show that constructed in this way, the hight of the tree $\Tt$ is finite. Indeed, because during the BPFL, one will end up at a point after finite time. And every time we apply Lemma~\ref{lm7} to the curve, the length is decreased by a definite amount $\frac{1}{32\sqrt{1+2\cdot10^{-3}}}\cdot r_h$. In other words, Lemma~\ref{lm7} can be applied for at most $\length(\ga_1)/(\frac{1}{32\sqrt{1+2\cdot10^{-3}}}\cdot r_h)$ times and hence the hight of $\Tt$ is finite.
\end{proof}

We may now proceed to the proof of Theorem~\ref{thm1_l}.

\begin{proof}[Proof of Theorem~\ref{thm1_l}]
Let $M\in\Mm(4,v,D)$. Suppose that there is no closed geodesic of length $\leq 4D$. Then Theorem~\ref{thm2_w} implies that every loop in $M$ may be contracted via a homotopy with width bounded by $\OM=\OM(v,D)$. This further implies that the depth $S_p(M,4D)\leq \max\{4D,2\OM(v,D)+2D\}$.

Let $\OM_p(M)$ denote the space of continuous maps $\{S^1\ra M\}$ based at $p\in M$ and $\OM^E_pM$ the subspace where every curve is of length $\leq E$. Now by taking the integer $k=2$ in Theorem~\ref{thm4_q}, we conclude that for every positive integer $m$, every map $f:S^{m}\ra \OM_pM$ is homotopic to a map $\ti{f}:S^m\ra \OM^F_pM$, where
$$F=F(m,v,D)=10\cdot m+D+(2m-1)\cdot\max\{L,2\OM+2D\}.$$
And in particular, the length of a shortest periodic geodesic does not exceed $F(m,v,D)$.

Finally, since our manifold is simply-connected, suppose it is $(l-1)-$connected but not $l-$connected for $l\geq 2$, the above argument shows that there is a periodic geodesic of length $\leq F(l,v,D)$. However, since we know that $H^4(M)\neq0$, by Hurewicz theorem (see \cite[Theorem~4.32]{hatcher2002algebraic}), if $M$ is $3-$connected then $\pi_4(M)\cong H^4(M)\neq 0$, and hence we can take $F(v,D)=F(4,v,D)$ in the above argument.
\end{proof}

\section*{Acknowledgement}
The authors are grateful to Alexander Nabutovsky and Regina Rotman for suggesting this problem and numerous helpful discussions.
We thank Vitali Kapovitch for useful discussions about manifolds with bounded Ricci curvature. We thank Robert Haslhofer for useful discussions about the $\ep$-regularity theorem. We also thank Aaron Naber for answering several questions about his work \cite{cheeger2014regularity} with Jeff Cheeger.

\bigskip
\bibliographystyle{alpha}
\bibliography{mybib}

\end{document}